\numberwithin{equation}{section}
\newtheorem{theorem}{Theorem}[section]
\newtheorem{definition}[theorem]{Definition}
\newtheorem{lemma}[theorem]{Lemma}
\newtheorem{proposition}[theorem]{Proposition}
\newtheorem{corollary}[theorem]{Corollary}
\newtheorem{remark}[theorem]{Remark}
\begin{document}
	
	\title[Global Solvability on Non-Compact Manifolds]{Global Solvability for Involutive Systems on Non-Compact Manifolds}
	
	\author[S. Coriasco]{Sandro Coriasco}
	\address{
		Dipartimento di Matematica ``Giuseppe Peano'',
		Università degli Studi di Torino,
		Via Carlo Alberto 10, 10123 Torino,
		Italia
	}
	\email{sandro.coriasco@unito.it}
	
	\author[A. Kirilov]{Alexandre Kirilov}
	\address{
		Departamento de Matemática,
		Universidade Federal do Paraná,
		Caixa Postal 19096, 81531-980 Curitiba, PR,
		Brasil
	}
	\email{akirilov@ufpr.br}
	
	\author[W. de Moraes]{Wagner A. A. de Moraes}
	\address{
		Departamento de Matemática,
		Universidade Federal do Paraná,
		Caixa Postal 19096, 81531-980 Curitiba, PR,
		Brasil
	}
	\email{wagnermoraes@ufpr.br}
	
	\author[P. Tokoro]{Pedro M. Tokoro}
	\address{
		Programa de Pós-Graduação em Matemática,
		Universidade Federal do Paraná,
		Caixa Postal 19096, 81531-980 Curitiba, PR,
		Brasil
	}
	\email{pedro.tokoro@ufpr.br}
	

	\subjclass[2020]{Primary 35N10, 58J10; Secondary 35B65, 58J40.}

	\keywords{Global solvability, closed range, involutive structures, global hypoellipticity, Diophantine conditions, non-compact manifolds.}

	\begin{abstract}
		We establish necessary and sufficient conditions for the closedness of the range of a class of first-order differential operators associated with an involutive structure on $M\times\mathbb{T}^m$, where $M$ is a non-compact manifold satisfying suitable geometric assumptions and $\mathbb{T}^m$ is the $m$-dimensional torus. In addition, we prove that a weaker notion of global hypoellipticity ensures the closedness of the range for differential operators on smooth paracompact manifolds, thereby extending to the non-compact setting a result previously obtained by G.~Araújo, I.~Ferra, and L.~Ragognette [J. Anal. Math. 148, No. 1, 85-118, 2022] for compact manifolds.
	\end{abstract}

	\maketitle
	
	\section{Introduction}\label{sec:intro}
	
	In this work, we investigate the closedness of the range of a class of first-order linear partial differential operators acting on the product manifold $M\times\mathbb{T}^m$, where $M$ is a smooth, non-compact, $n$-dimensional manifold diffeomorphic to the interior of a compact manifold with boundary, and $\mathbb{T}^m\simeq \mathbb{R}^m / 2\pi\mathbb{Z}^m$ is the $m$-dimensional torus.  
	More precisely, we consider the operators
	\begin{equation}\label{Lq_smooth}
		\mathbb{L}^q :
		\mathsf{\Lambda}^q C^\infty(M\times\mathbb{T}^m)
		\longrightarrow
		\mathsf{\Lambda}^{q+1} C^\infty(M\times\mathbb{T}^m),
		\qquad
		\mathbb{L}^q u = \mathrm{d}_t u + \sum_{k=1}^{m}\omega_k \wedge \partial_{x_k}u,
	\end{equation}
	for $q=0,\dots,n-1$, where $\mathrm{d}_t$ denotes the exterior derivative on $M$, $\partial_{x_k}$ are the partial derivatives with respect to the periodic variables, and $\omega_1,\dots,\omega_m$ are smooth closed $1$-forms on $M$.
	
	Such operators are naturally associated with involutive structures on manifolds and can be regarded, in local coordinates, as systems of vector fields. We refer the reader to \cite{BCH_book,Treves} for a detailed discussion.
	
	By a standard argument (see, for instance, \cite{Treves}), the transpose of $\mathbb{L}^q$ is given by
	\begin{equation}\label{Lq_transp}
		(-1)^{q+1}\mathbb{L}^{n-q-1} :
		\mathsf{\Lambda}^{n-q-1}\mathcal{E}'(M\times\mathbb{T}^m)
		\longrightarrow
		\mathsf{\Lambda}^{n-q}\mathcal{E}'(M\times\mathbb{T}^m),
	\end{equation}
	and the topological dual of $\mathsf{\Lambda}^q C^\infty(M\times\mathbb{T}^m)$ can be identified with
	$\mathsf{\Lambda}^{n-q-1}\mathcal{E}'(M\times\mathbb{T}^m)$, the space of compactly supported $(n-q-1)$-currents.
	We show that each space $\mathsf{\Lambda}^q C^\infty(M\times\mathbb{T}^m)$ is a Fréchet–Schwartz (FS) space, so its dual
	$\mathsf{\Lambda}^q \mathcal{E}'(M\times\mathbb{T}^m)$ is a dual FS (DFS) space.
	Within this framework, we consider the following notions of global solvability.
	
	\begin{definition}
		We say that $\mathbb{L}^q$ is:
		\begin{itemize}
			\item \emph{Globally $(\mathcal{E}',\mathcal{E}')$-solvable} if, for every
			$f\in\mathsf{\Lambda}^{0,q+1}\mathcal{E}'(M\times\mathbb{T}^m)$ satisfying
			\begin{equation}\label{solv_EE}
				\langle f,\phi\rangle=0,\quad
				\forall \phi\in
				\mathsf{\Lambda}^{0,n-q-1}C^\infty(M\times\mathbb{T}^m)\cap\ker\mathbb{L}^{n-q-1},
			\end{equation}
			there exists $u\in\mathsf{\Lambda}^{0,q}\mathcal{E}'(M\times\mathbb{T}^m)$ such that $\mathbb{L}^q u=f$;
			
			\item \emph{Globally $(C^\infty,C^\infty)$-solvable} if, for every
			$f\in\mathsf{\Lambda}^{0,q+1}C^\infty(M\times\mathbb{T}^m)$ satisfying
			\begin{equation}\label{solv_CC}
				\langle v,f\rangle=0,\quad
				\forall v\in
				\mathsf{\Lambda}^{0,n-q-1}\mathcal{E}'(M\times\mathbb{T}^m)\cap\ker\mathbb{L}^{n-q-1},
			\end{equation}
			there exists $u\in\mathsf{\Lambda}^{0,q}C^\infty(M\times\mathbb{T}^m)$ such that $\mathbb{L}^q u=f$;
			
			\item \emph{Globally $(\mathcal{E}',C_c^\infty)$-solvable} if, for every
			$f\in\mathsf{\Lambda}^{0,q+1}C_c^\infty(M\times\mathbb{T}^m)$ satisfying \eqref{solv_EE}, there exists
			$u\in\mathsf{\Lambda}^{0,q}\mathcal{E}'(M\times\mathbb{T}^m)$ such that $\mathbb{L}^q u=f$.
		\end{itemize}
	\end{definition}
	
	Since the involved spaces are FS/DFS, the first two notions of solvability are equivalent to the closedness of the range of the corresponding operators.
	By duality, the $(C^\infty,C^\infty)$-global solvability of $\mathbb{L}^0$ is equivalent to the
	$(\mathcal{E}',\mathcal{E}')$-global solvability of $\mathbb{L}^{n-1}$, and this, in turn, implies
	the $(\mathcal{E}',C_c^\infty)$-global solvability of $\mathbb{L}^{n-1}$.
	
	Following \cite{ADL2023gs} and \cite{AFJR2024}, we show that the $(C^\infty,C^\infty)$-global solvability of $\mathbb{L}^0$ 	is governed by a number-theoretic property of the family
	$\boldsymbol{\omega}=(\omega_1,\dots,\omega_m)$,
	which is equivalent to a Diophantine condition on a matrix, which we denote by $A(\boldsymbol{\omega})$, determined by the first de~Rham cohomology group of $M$.
	When $M$ is compact, this relation follows from the Hodge theorem.
	If $M$ is diffeomorphic to the interior of a compact manifold with boundary and endowed with a suitable scattering metric,
	a corresponding version of the Hodge theorem still holds under additional assumptions on the forms $\omega_k$,
	so the same analysis applies.  
	In this case, $\dim H_{\mathrm{dR}}^1(M)<\infty$.
	
	We prove that the $(\mathcal{E}',C_c^\infty)$-global solvability of $\mathbb{L}^{n-1}$
	implies the Diophantine condition through a Hörmander-type a priori estimate.
	By expanding in partial Fourier series, we decompose both the spaces of forms and the operator $\mathbb{L}^0$
	into a direct sum indexed by disjoint frequency subsets and show that each component has closed range.
	One component is topologically conjugate to the partial exterior derivative $\mathrm{d}_t$,
	which has closed range by the de~Rham theorem together with functorial properties of exact sequences of nuclear Fréchet spaces.
	For the remaining component, denoted by $\mathbb{L}^0_{\Gamma_1}$,
	we show that the corresponding Diophantine condition ensures global hypoellipticity,
	as established in~\cite{CKMT}, extending results obtained in \cite{ADL2023gh} and \cite{BCM1993} for closed manifolds.
	
	Recall that a differential operator $P:C^\infty(M,\mathbb{E})\to C^\infty(M,\mathbb{F})$ between smooth sections of vector bundles over a smooth manifold $M$ is globally hypoelliptic if
	\[
	u\in\mathscr{D}'(M,\mathbb{E}),\quad Pu\in C^\infty(M,\mathbb{F}) \ \Rightarrow\
	u\in C^\infty(M,\mathbb{E}),
	\]
	and almost globally hypoelliptic if
	\[
	u\in\mathscr{D}'(M,\mathbb{E}),\quad Pu\in C^\infty(M,\mathbb{F}) \ \Rightarrow\
	\exists v\in C^\infty(M,\mathbb{E})\text{ such that }Pu=Pv.
	\]
	For compact $M$, it was proved in \cite{AFR2022} that almost globally hypoelliptic operators have closed range.
	We extend this result to any smooth paracompact manifold $M$, thereby obtaining the closedness of the range of $\mathbb{L}^0_{\Gamma_1}$. To do so, we show that $C^\infty(M)$ can be realized as the projective limit of local Sobolev spaces, which are nested Fréchet spaces with compact embeddings, and apply Komatsu’s argument~\cite{Komatsu1967} to construct an equivalent nested sequence of Banach spaces with compact inclusions. Then, using the abstract framework developed in~\cite{AFR22proc}, we establish a general criterion linking almost global hypoellipticity and closedness of range for maps between pairs of topological vector spaces.
	
	Our two main results can be summarized as follows.
	
	\begin{theorem}\label{thm:main1}
		Let $M$ be a smooth manifold diffeomorphic to the interior of a compact manifold with boundary,
		and let $\boldsymbol{\omega}=(\omega_1,\dots,\omega_m)$ be a family of smooth real-valued closed $1$-forms on $M$
		satisfying suitable assumptions.
		The following statements are equivalent:
		\begin{enumerate}
			\item $\mathbb{L}^0$ is globally $(C^\infty,C^\infty)$-solvable;
			\item $\mathbb{L}^{n-1}$ is globally $(\mathcal{E}',\mathcal{E}')$-solvable;
			\item $\mathbb{L}^{n-1}$ is globally $(\mathcal{E}',C_c^\infty)$-solvable;
			\item $\boldsymbol{\omega}$ is not $\Gamma_1$-Liouville;
			\item The matrix of periods $A(\boldsymbol{\omega})$ satisfies the Diophantine condition~\eqref{DCg} for $\Gamma=\Gamma_1$.
		\end{enumerate}
	\end{theorem}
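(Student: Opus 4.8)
The plan is to prove the five statements equivalent by establishing the cycle
\[
(1)\Longleftrightarrow(2)\Longrightarrow(3)\Longrightarrow(5)\Longleftrightarrow(4)\Longrightarrow(1).
\]
Three of these links are essentially formal. The equivalence $(1)\Leftrightarrow(2)$ is the duality already recorded above: since the spaces of forms are FS and their duals are DFS, $(C^\infty,C^\infty)$-solvability of $\mathbb{L}^0$ amounts to closedness of its range, which by the closed-range theorem for Fréchet spaces is equivalent to closedness of the range of its transpose $(-1)^{n}\mathbb{L}^{n-1}$ (cf. \eqref{Lq_transp}), that is, to $(\mathcal{E}',\mathcal{E}')$-solvability of $\mathbb{L}^{n-1}$. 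The implication $(2)\Rightarrow(3)$ is immediate, since $C_c^\infty\subset\mathcal{E}'$ and the compatibility condition \eqref{solv_EE} is the same in both cases. Finally, $(4)\Leftrightarrow(5)$ is a matter of unwinding definitions, the $\Gamma_1$-Liouville property of $\boldsymbol{\omega}$ being precisely the negation of the Diophantine condition \eqref{DCg} for the period matrix $A(\boldsymbol{\omega})$ along the frequency set $\Gamma_1$.

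The first substantive step is $(3)\Rightarrow(5)$, which I would argue by contraposition. Assume $\boldsymbol{\omega}$ is $\Gamma_1$-Liouville, so the small divisors attached to $A(\boldsymbol{\omega})$ decay faster than any polynomial along a sequence of frequencies $\xi_j\in\Gamma_1$. If $\mathbb{L}^{n-1}$ were $(\mathcal{E}',C_c^\infty)$-solvable, an open-mapping argument applied to the closed-range transpose would yield a Hörmander-type a priori estimate bounding a distributional solution by finitely many seminorms of the data. Testing this estimate against forms concentrated on the single Fourier mode $\xi_j$ produces a uniform lower bound on the corresponding small divisor, contradicting the Liouville hypothesis; hence solvability forces the Diophantine condition.

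The core of the theorem is the converse construction $(5)\Rightarrow(1)$. Expanding in the partial Fourier series in the periodic variables splits the spaces of forms into a sequence indexed by $\xi\in\mathbb{Z}^m$ and conjugates $\mathbb{L}^0$ to a direct sum of the operators $\mathrm{d}_t+i(\xi\cdot\boldsymbol{\omega})\wedge$ acting on forms over $M$, where $\xi\cdot\boldsymbol{\omega}=\sum_{k}\xi_k\omega_k$. I would group the frequencies into those for which $\xi\cdot\boldsymbol{\omega}$ is exact and the complementary set $\Gamma_1$. On the first block, choosing a global primitive $\Phi_\xi$ with $\mathrm{d}_t\Phi_\xi=\xi\cdot\boldsymbol{\omega}$ and conjugating by $e^{\pm i\Phi_\xi}$ identifies the operator with the partial exterior derivative $\mathrm{d}_t$, whose range is closed by the de~Rham theorem together with the functoriality of exact sequences of nuclear Fréchet spaces. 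On the block $\mathbb{L}^0_{\Gamma_1}$, the Diophantine condition \eqref{DCg} yields global hypoellipticity by \cite{CKMT}, hence almost global hypoellipticity, from which I would deduce closedness of the range through the following non-compact refinement of \cite{AFR2022}: realize $C^\infty(M)$ as the projective limit of the local Sobolev spaces with compact linking maps, invoke Komatsu's construction \cite{Komatsu1967} to pass to an equivalent nested sequence of Banach spaces with compact inclusions, and apply the abstract closed-range criterion of \cite{AFR22proc}. Recombining the two blocks and transporting back through the Fourier isomorphism then gives closedness of the range of $\mathbb{L}^0$, namely statement $(1)$.

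The main obstacle is the closed-range statement for $\mathbb{L}^0_{\Gamma_1}$ on the non-compact manifold $M$. On a compact manifold the argument of \cite{AFR2022} rests on the Rellich compactness of the global Sobolev embeddings, which is no longer available here; the technical heart is therefore to recover a usable compact nested Banach structure from the local Sobolev scale via Komatsu's argument and to check that the abstract hypotheses of \cite{AFR22proc} are genuinely satisfied in this setting. A secondary difficulty is the reassembly step: one must ensure that the closed-range estimates for the individual Fourier blocks are sufficiently uniform in $\xi$ to survive the passage to the countable direct sum in the DFS topology, so that closedness is preserved for the full operator.
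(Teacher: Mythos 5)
Your cycle $(1)\Leftrightarrow(2)\Rightarrow(3)\Rightarrow(5)\Leftrightarrow(4)\Rightarrow(1)$ and all of its ingredients coincide with the paper's proof: the FS/DFS duality for $(1)\Leftrightarrow(2)$, the Liouville sequence tested against a H\"ormander estimate for $(3)\Rightarrow(4)/(5)$, and the two-block Fourier decomposition with conjugation on one block and hypoellipticity-plus-the-non-compact-closed-range criterion on the other for $(4)\Rightarrow(1)$. There is, however, one genuine flaw in your description of the first block. You split the frequencies into ``those for which $\xi\cdot\boldsymbol{\omega}$ is \emph{exact}'' and call the complement $\Gamma_1$, and you propose to conjugate by $e^{\pm i\Phi_\xi}$ where $\Phi_\xi$ is a \emph{global primitive} of $\xi\cdot\boldsymbol{\omega}$ on $M$. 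The correct dichotomy, and the one the paper uses in \eqref{eq:Gamma01}, is \emph{integral} versus non-integral: $\Gamma_0$ consists of the $\xi$ whose periods $\frac{1}{2\pi}\int_\sigma \xi\cdot\boldsymbol{\omega}$ lie in $\mathbb{Z}$. For such $\xi$ no global primitive need exist when $H^1_{\mathrm{dR}}(M)\neq 0$; the primitive $\boldsymbol{\psi}_\xi$ lives only on the universal cover, and the point of Proposition~\ref{uni_cov} is that $e^{\pm i\boldsymbol{\psi}_\xi}$ nevertheless descends to $M$, which is what makes the conjugation $\mathscr{T}$ of Theorem~\ref{top_iso} well defined. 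As written, your argument covers only the exact frequencies, and the leftover integral-but-not-exact frequencies cannot be absorbed into the other block: for those $\xi$ the operator $\mathrm{d}_t+i(\xi\cdot\boldsymbol{\omega})$ has nontrivial kernel (it is ``rational''), so the Diophantine/hypoellipticity argument does not apply to them. The fix is exactly the descent argument above, but it must be made.

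A secondary remark: the uniformity-in-$\xi$ issue you flag at the end is an artifact of your mode-by-mode framing. The paper never takes a countable direct sum of closed-range estimates; it decomposes $\mathsf{\Lambda}^{0,q}C^\infty(M\times\mathbb{T}^m)$ into the topological direct sum of the two closed subspaces indexed by $\Gamma_0$ and $\Gamma_1$, proves closed range for each block by a single global argument (the isomorphism $\mathscr{T}$ followed by Proposition~\ref{partial_closed} on $\Gamma_0$; Proposition~\ref{gamma_gh} on $\Gamma_1$), and then uses that $\mathrm{ran}(\mathbb{L}^0)=\mathrm{ran}(\mathbb{L}^0_{\Gamma_0})\oplus\mathrm{ran}(\mathbb{L}^0_{\Gamma_1})$. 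No uniformity across individual frequencies is needed once you organize the proof this way.
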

	
		In the statement of Theorem \ref{thm:main1}, $\Gamma_1\subset\mathbb{Z}^m$ is a distinguished subset of frequencies (see \eqref{eq:Gamma01} below),
		$\Gamma_1$-Liouville families of $1$-forms correspond to those satisfying a number-theoretic approximation property
		(see Definition~\ref{def_gLiou} below),
		and~\eqref{DCg} is the associated Diophantine condition for the matrix $A(\boldsymbol{\omega})$.
	
	\begin{theorem}
		Let $P:C^\infty(M,\mathbb{E})\to C^\infty(M,\mathbb{F})$
		be an almost globally hypoelliptic operator between smooth sections of vector bundles
		over a smooth paracompact manifold $M$.
		Then $P$ has closed range.
	\end{theorem}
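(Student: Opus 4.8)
The plan is to strip almost global hypoellipticity down to its purely functional-analytic content and feed it into the abstract closed-range criterion of \cite{AFR22proc}. The crucial reformulation is that almost global hypoellipticity of $P$ says exactly
\[
P\bigl(\mathscr{D}'(M,\mathbb{E})\bigr)\cap C^\infty(M,\mathbb{F})=P\bigl(C^\infty(M,\mathbb{E})\bigr),
\]
where ``$\subseteq$'' is the definition and ``$\supseteq$'' is trivial. Granting this, closed range is almost formal: if $g$ lies in the closure of $P(C^\infty(M,\mathbb{E}))$ taken in $C^\infty(M,\mathbb{F})$, then, since $C^\infty(M,\mathbb{F})\hookrightarrow\mathscr{D}'(M,\mathbb{F})$ is continuous and $P$ commutes with this inclusion, $g$ lies in the closure of $P(\mathscr{D}'(M,\mathbb{E}))$ taken in $\mathscr{D}'(M,\mathbb{F})$; if the latter range is closed, then $g\in P(\mathscr{D}'(M,\mathbb{E}))\cap C^\infty(M,\mathbb{F})=P(C^\infty(M,\mathbb{E}))$, as wanted. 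Thus the argument splits into producing a distributional solution (closed range at the ``large'' level) and upgrading it to a smooth one, the upgrade being precisely almost global hypoellipticity. My first step would be to record this reduction as the mechanism underlying the abstract criterion.

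Making the first step rigorous requires a genuine Banach-space description of the spaces, and this is where paracompactness enters. I would fix a Riemannian metric and a connection on $\mathbb{E}$, together with an exhaustion $K_1\subset K_2\subset\cdots$ of $M$ by compact sets with $K_j\subset\operatorname{int}K_{j+1}$ and $\bigcup_jK_j=M$ (available since each component of a paracompact manifold is second countable, hence $\sigma$-compact). The local Sobolev norms $u\mapsto\|u\|_{H^j(K_j,\mathbb{E})}$ then form a fundamental system of seminorms for the Fréchet topology of $C^\infty(M,\mathbb{E})$, and the linking maps $H^{j+1}(K_{j+1},\mathbb{E})\to H^{j}(K_{j},\mathbb{E})$ (restriction followed by a Rellich--Kondrachov embedding) are compact because $K_j\subset\operatorname{int}K_{j+1}$. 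Hence $C^\infty(M,\mathbb{E})$, and likewise $C^\infty(M,\mathbb{F})$, is realized as a projective limit of Banach spaces with compact linking maps, i.e.\ as an $(\mathrm{FS})$ space. Komatsu's construction \cite{Komatsu1967} next replaces this spectrum by an equivalent reduced nested sequence of Banach spaces with compact inclusions, with respect to which $P$, a differential operator of finite order, acts as a continuous morphism of spectra after a fixed index shift absorbing the loss of derivatives.

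With this structure in hand I would invoke the abstract framework of \cite{AFR22proc} for maps between pairs of topological vector spaces carrying such nested families: it packages the heuristic of the first paragraph into a theorem asserting that almost global hypoellipticity, combined with the compact-embedding structure, forces the range to be closed. The main obstacle is exactly the step that is trivial in the compact case of \cite{AFR2022}: establishing closed range at the enveloping distributional level when there are no global Sobolev spaces, so that one must control the double limit in smoothness and in the exhaustion $K_j\uparrow M$ simultaneously. Here the compactness of the linking maps is indispensable—it is what allows a Mittag-Leffler/Komatsu-type argument to transport closed range through the projective limit—and it is precisely the point at which paracompactness (equivalently, the countable exhaustion yielding an $(\mathrm{FS})$ structure) is genuinely used, rather than the mere compactness of $M$ exploited in the earlier work.
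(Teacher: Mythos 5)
There is a genuine gap at the heart of your argument. Your first paragraph reduces the theorem to the claim that $P\colon\mathscr{D}'(M,\mathbb{E})\to\mathscr{D}'(M,\mathbb{F})$ has closed range: you take $g$ in the closure of $P(C^\infty(M,\mathbb{E}))$, push it into $\mathscr{D}'(M,\mathbb{F})$, and then write ``\emph{if} the latter range is closed'' to produce a distributional solution that almost global hypoellipticity upgrades. But closedness of the range at the distributional level is never established, it is not ``trivial in the compact case'' (even for compact $M$ what duality gives you is a statement about the \emph{transpose} ${}^tP$ on $\mathscr{D}'$, not about $P$ itself), and it is not the mechanism of the abstract criterion in \cite{AFR22proc} that you invoke. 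Your closing appeal to ``a Mittag-Leffler/Komatsu-type argument to transport closed range through the projective limit'' has no starting point: there is no level of the spectrum at which closed range is known, so there is nothing to transport. In short, the reduction replaces the theorem by a statement that is at least as hard and is left unproved.

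The actual route (Theorem~\ref{FS_GHGS} in the paper) does not produce a distributional solution first. One passes to the quotient by $\ker P$, so that property $(\mathcal{H}')$ becomes property $(\mathcal{H})$ for the induced map $P'$; Proposition~\ref{prop_pairs} then says the graph map $\gamma_{P'}\colon E/K\to (E_0/K_0)\times F$ has closed range, the Open Mapping Theorem applied to $\gamma_{P'}^{-1}$ yields the a priori estimate $\|u+K\|_{E_k/K}\le C\bigl(\|u+K_0\|_{E_0/K_0}+\|Pu\|_{F_j}\bigr)$, and the compactness of the inclusion $E_k\hookrightarrow E_0$ (this is where your FS structure is genuinely used) absorbs the weak term by a contradiction argument, giving $\|u+K\|_{E_k/K}\le C'\|Pu\|_{F_j}$. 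Closed range then follows from K\"othe's sequential criterion (Lemma~\ref{frechet_closed}). Your second paragraph correctly sets up the FS realization of $C^\infty(M,\mathbb{E})$ via local Sobolev spaces and Komatsu's reduction to Banach spaces with compact inclusions, and that part matches the paper; what is missing is the entire a priori estimate argument, which is the actual content of the proof and cannot be replaced by the first paragraph's heuristic.
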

	
	The paper is organized as follows. In Section \ref{sec:fsonmfs} we recall 
	some preliminaries and topological properties of functional spaces on a paracompact manifold,
	while in Section \ref{sec:glregcycles} we summarize the hypoellipticity results we obtained in \cite{CKMT},
	which we employ in the sequel.
	Section \ref{sec:regclrng} is devoted to proving the closedness of the range of operators on FS spaces, under certain assumptions.
	Our main results are then proved in the concluding Section \ref{sec:globsol}.

	\section{Function Spaces on Manifolds}\label{sec:fsonmfs}
	
	In this section, we recall some basic facts about the topology of spaces of smooth functions and smooth differential forms on a paracompact manifold.
	
	Let $U \subset \mathbb{R}^n$ be an open set. The space $C^\infty(U)$ of smooth, complex-valued functions on $U$ is a Fréchet space when endowed with the topology generated by the family of seminorms
	\[
	\|f\|_{m,K} = \sup_{t \in K} \sup_{|\alpha| \leq m} \big|\partial_t^\alpha f(t)\big|,
	\]
	where $K$ ranges over all compact subsets of $U$, $m \in \mathbb{N}_0$, and $\alpha \in \mathbb{N}_0^n$ is a multi-index.
	
	Now, let $M$ be a smooth, $n$-dimensional, paracompact manifold, and let $\{(\Omega_j, \chi_j)\}_{j \in \mathbb{N}}$ be a countable, locally finite atlas of $M$, with each $\Omega_j$ having compact closure. The space $C^\infty(M)$ consists of all functions $f: M \to \mathbb{C}$ such that, for every chart $(\Omega_j, \chi_j)$, the local representation
	\[
	f \circ \chi_j^{-1} : \chi_j(\Omega_j) \to \mathbb{C}
	\]
	is smooth in the usual sense.
	
	The canonical locally convex topology on $C^\infty(M)$ is defined by the family of seminorms $\|\cdot\|_{\ell,K}$, where $\ell \in \mathbb{N}_0$ and $K \subset M$ is compact. For any such compact $K$, there exists a finite subset of indices $J_K \subset \mathbb{N}$ such that $\{\Omega_j\}_{j \in J_K}$ covers $K$. We then set
	\begin{equation}\label{semi_lK}
		\|f\|_{\ell,K} = \sum_{j \in J_K} \| f \circ \chi_j^{-1}\|_{\ell, \chi_j(K \cap \Omega_j)}, \quad f \in C^\infty(M).
	\end{equation}
	
	This topology does not depend on the particular choice of finite covering of $K$ nor on the initial atlas of $M$.

	\subsection{Partial Fourier Expansion on \( M \times \mathbb{T}^m \)}\label{sec:partial_fourier} \
	
	Let \( U \subset \mathbb{R}^n \) be an open set, and consider the $m$-dimensional torus \( \mathbb{T}^m = \mathbb{R}^m / 2\pi\mathbb{Z}^m \). Given a function \( f \in C^\infty(U \times \mathbb{T}^m) \) and \( \xi \in \mathbb{Z}^m \), we define its \emph{partial Fourier coefficient} as the function \( \widehat{f}_\xi \in C^\infty(U) \) given by
	\[
	\widehat{f}_\xi(t) = \frac{1}{(2\pi)^m} \int_{\mathbb{T}^m} e^{-i\xi\cdot x} \, f(t,x) \,\mathrm{d}x, \quad t \in U.
	\]
	
	In the case where \( u \in \mathscr{D}'(U\times\mathbb{T}^m) \), for each \( \xi \in \mathbb{Z}^m \), its partial Fourier coefficient is the distribution \( \widehat{u}_\xi \in \mathscr{D}'(U) \) defined by
	\[
	\langle \widehat{u}_\xi, \phi \rangle = \langle u(t,x), \phi(t) \otimes e^{-i\xi\cdot x} \rangle, \quad \phi \in C_c^\infty(U).
	\]
	We now recall a characterization of smoothness in terms of Fourier coefficients.
	\begin{proposition}
	A distribution \( u \in \mathscr{D}'(U \times \mathbb{T}^m) \) belongs to \( C^\infty(U \times \mathbb{T}^m) \) if and only if:
	\begin{itemize}
		\item[(1)] \( \widehat{u}_\xi \in C^\infty(U) \) for all \( \xi \in \mathbb{Z}^m \);
		\item[(2)] For every compact set \( K \subset U \), every multi-index \( \alpha \in \mathbb{N}_0^n \), and every \( N \in \mathbb{N}_0 \), there exists a constant \( C > 0 \) such that
		\[
		\sup_{t \in K} \left| \partial_t^\alpha \widehat{u}_\xi(t) \right| \leq C \,(1 + |\xi|)^{-N}, \quad \xi \in \mathbb{Z}^m.
		\]
	\end{itemize}
	
	Under these conditions, \( u \) admits the Fourier expansion
	\[
	u(t,x) = \sum_{\xi \in \mathbb{Z}^m} \widehat{u}_\xi(t) \, e^{i\xi \cdot x},
	\]
	with uniform convergence on compact subsets of \( U \times \mathbb{T}^m \). 
	Moreover, \( u = 0 \) in \( \mathscr{D}'(U \times \mathbb{T}^m) \) if and only if \( \widehat{u}_\xi \equiv 0 \) for all \( \xi \in \mathbb{Z}^m \).
	\end{proposition}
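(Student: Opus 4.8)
The plan is to prove the equivalence in three stages---necessity, a uniqueness statement, and sufficiency---treating the distributional uniqueness assertion as the logical keystone that links the other two. First I would establish necessity. Assuming $u\in C^\infty(U\times\mathbb{T}^m)$, differentiation under the integral sign shows that each $\widehat{u}_\xi\in C^\infty(U)$, which is (1). For the rapid decay (2), I would integrate by parts in the periodic variable: since $\partial_x^\beta e^{-i\xi\cdot x}=(-i\xi)^\beta e^{-i\xi\cdot x}$, moving the $x$-derivatives onto $u$ (all boundary terms vanish by periodicity) yields $\xi^\beta\partial_t^\alpha\widehat{u}_\xi(t)=\frac{(-i)^{|\beta|}}{(2\pi)^m}\int_{\mathbb{T}^m}e^{-i\xi\cdot x}\,\partial_t^\alpha\partial_x^\beta u(t,x)\,\mathrm{d}x$, so that $\sup_{t\in K}\big|\xi^\beta\partial_t^\alpha\widehat{u}_\xi(t)\big|\le\sup_{K\times\mathbb{T}^m}\big|\partial_t^\alpha\partial_x^\beta u\big|<\infty$. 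Summing these bounds over $|\beta|\le N$ controls $(1+|\xi|)^N\sup_K|\partial_t^\alpha\widehat{u}_\xi|$, which is exactly the asserted estimate.

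Next I would prove the uniqueness statement. The nontrivial direction assumes $\widehat{u}_\xi\equiv0$ for all $\xi$ and concludes $u=0$. Given a test function $\psi\in C_c^\infty(U\times\mathbb{T}^m)$, the necessity estimate applied to $\psi$ shows that its partial Fourier series $\sum_\xi\widehat{\psi}_\xi(t)\,e^{i\xi\cdot x}$ converges to $\psi$ in the topology of $C_c^\infty(U\times\mathbb{T}^m)$: all terms are supported in a single compact subset of $U$ in the $t$-variable, and the rapid decay of the $\widehat{\psi}_\xi$ forces convergence of the tails in every seminorm. Pairing with $u$ term by term---legitimate precisely because of this convergence---gives $\langle u,\psi\rangle=\sum_\xi\langle u,\widehat{\psi}_\xi\otimes e^{i\xi\cdot x}\rangle=\sum_\xi\langle\widehat{u}_{-\xi},\widehat{\psi}_\xi\rangle=0$, using the defining formula for the partial Fourier coefficients. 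As $\psi$ was arbitrary, $u=0$.

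Finally, for sufficiency I would assume (1) and (2), set $v(t,x)=\sum_\xi\widehat{u}_\xi(t)\,e^{i\xi\cdot x}$, and show $v\in C^\infty(U\times\mathbb{T}^m)$ via the Weierstrass M-test. For any $\alpha,\beta$ the formal termwise derivative is $(i\xi)^\beta\partial_t^\alpha\widehat{u}_\xi(t)\,e^{i\xi\cdot x}$, and choosing $N>|\beta|+m+1$ in (2) bounds its supremum over $K$ by $C\,|\xi|^{|\beta|}(1+|\xi|)^{-N}$, a summable majorant independent of $(t,x)$; hence the series and all its formal derivatives converge uniformly on compact subsets, so $v\in C^\infty$ with derivatives taken term by term, and the claimed expansion holds. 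Computing the partial Fourier coefficients of $v$ and interchanging sum and integral (again justified by uniform convergence) gives $\widehat{v}_\xi=\widehat{u}_\xi$ by orthogonality of the characters. Thus $\widehat{u}_\xi-\widehat{v}_\xi\equiv0$ for every $\xi$, and the uniqueness statement forces $u=v\in C^\infty(U\times\mathbb{T}^m)$.

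The routine analytic ingredients---differentiation under the integral, integration by parts, and the M-test---present no genuine difficulty. The point requiring the most care, and the hinge of the whole argument, is the convergence of the partial Fourier series of a \emph{test function} in the topology of $C_c^\infty(U\times\mathbb{T}^m)$: one must verify that all partial sums keep their $t$-supports inside one fixed compact set while the tails vanish in every seminorm. This is what makes the termwise pairing with $u$ legitimate and what allows uniqueness to be bootstrapped from necessity, so it is the step I would write out in full detail.
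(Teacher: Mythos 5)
The paper states this proposition without proof (it is recalled as a standard fact), so there is nothing to compare against; your argument is the standard one and is correct, with the key step — convergence of the partial Fourier series of a test function in $C_c^\infty(U\times\mathbb{T}^m)$, with all partial sums supported in a fixed compact set — correctly identified and justified. The only point worth flagging is a normalization wrinkle inherited from the paper itself: with the stated conventions, the distributional coefficient $\langle \widehat{u}_\xi,\phi\rangle=\langle u,\phi\otimes e^{-i\xi\cdot x}\rangle$ differs from the integral formula for smooth $u$ by a factor $(2\pi)^m$, so in your sufficiency step you should fix one consistent normalization before concluding that $u-v$ has vanishing coefficients.
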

		
	In the general case of \( M \times \mathbb{T}^m \), the partial Fourier coefficients are defined locally via charts. 
	This local definition is consistent with the global smooth structure of \( M \) and the standard Fourier analysis on \( \mathbb{T}^m \).

	\subsection{Differential forms on \(M \times \mathbb{T}^m\)}	\
		
	We now consider differential forms on \( M \times \mathbb{T}^m \) that involve only differentials coming from the manifold \( M \). We denote the space of such forms of degree \( q \) by \( \mathsf{\Lambda}^{0,q} C^\infty(M \times \mathbb{T}^m) \), where the bigrading \( (0,q) \) indicates degree 0 in the torus variables \( x \) and degree \( q \) in the manifold variables \( t \). In a local coordinate chart \( (U,t_1,\dots,t_n) \) of \( M \), a form \( f \in \mathsf{\Lambda}^{0,q} C^\infty(M \times \mathbb{T}^m) \) can be written as
	\[
	f|_{U\times\mathbb{T}^m} = \sum_{|J|=q} f_J(t,x) \,\mathrm{d}t_J,
	\]
	where \( J = (j_1,\dots,j_q) \) denotes an ordered multi-index of length \( q \), with \( 1 \leq j_1 < \dots < j_q \leq n \), and \( \mathrm{d}t_J = \mathrm{d}t_{j_1} \wedge \dots \wedge \mathrm{d}t_{j_q} \).
	
	For \( \xi \in \mathbb{Z}^m \), the Fourier coefficient of \( f \) in local coordinates is given by
	\[
	\widehat{f}_\xi(t) = \sum_{|J|=q} (\widehat{f_J})_\xi(t) \,\mathrm{d}t_J \ \in \mathsf{\Lambda}^q C^\infty(U),
	\]
	yielding the expansion
	\[
	f = \sum_{\xi \in \mathbb{Z}^m} \ \sum_{|J|=q} (\widehat{f_J})_\xi(t) \, e^{i\xi\cdot x} \,\mathrm{d}t_J,
	\]
	with uniform convergence on compact subsets. The smoothness of \( f \) is characterized by the same decay condition on \( (\widehat{f_J})_\xi \) as in the scalar case.
	
	The local definition of partial Fourier coefficients is invariant under coordinate changes (see, e.g., \cite[Section~5]{ADL2023gh}). Therefore, \( \widehat{f}_\xi \in \mathsf{\Lambda}^{0,q} C^\infty(M) \) is globally well defined, and
	\[
	f = \sum_{\xi \in \mathbb{Z}^m} \widehat{f}_\xi \, e^{i\xi\cdot x}
	\]
	holds with uniform convergence on compact subsets.
		
	This construction provides the natural framework for defining differential operators acting on $\mathsf{\Lambda}^{0,q} C^\infty(M \times \mathbb{T}^m)$ through their action on the coefficients $(\widehat{f_J})_\xi$.
	
	\begin{proposition}\label{cont_fourier}
		For each \( \xi \in \mathbb{Z}^m \), the map
		\begin{equation*}\label{f_fxi}
			f \in \mathsf{\Lambda}^{0,q} C^\infty(M \times \mathbb{T}^m) 
			\ \longmapsto\ 
			\widehat{f}_\xi \in \mathsf{\Lambda}^q C^\infty(M)
		\end{equation*}
		is continuous.
	\end{proposition}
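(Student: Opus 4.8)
The plan is to exploit that both the domain $\mathsf{\Lambda}^{0,q} C^\infty(M\times\mathbb{T}^m)$ and the codomain $\mathsf{\Lambda}^q C^\infty(M)$ are Fréchet spaces whose topologies are generated by the seminorms $\|\cdot\|_{\ell,K}$ described in Section~\ref{sec:fsonmfs}, applied componentwise to the local coefficients of a form. Since the map $f\mapsto\widehat{f}_\xi$ is linear, by the standard criterion for continuity of linear maps between Fréchet spaces it suffices to show that for each $\ell\in\mathbb{N}_0$ and each compact $K\subset M$ there is a continuous seminorm on $\mathsf{\Lambda}^{0,q}C^\infty(M\times\mathbb{T}^m)$ dominating $\|\widehat{f}_\xi\|_{\ell,K}$ for all $f$.

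First I would fix a finite subatlas $\{(\Omega_j,\chi_j)\}_{j\in J_K}$ of $M$ covering $K$ and work in a single chart, where a form $f\in\mathsf{\Lambda}^{0,q}C^\infty(M\times\mathbb{T}^m)$ reads $\sum_{|J|=q} f_J(t,x)\,\mathrm{d}t_J$. Because the partial Fourier transform acts only in the periodic variable $x$, while the chart $\chi_j$ acts only in the manifold variable $t$, the two operations commute, and the $\mathrm{d}t_J$-component of $\widehat{f}_\xi$ is exactly $(\widehat{f_J})_\xi$. The key point is that the torus $\mathbb{T}^m$ is compact and $f$ is smooth, so one may differentiate under the integral sign: for every multi-index $\alpha$,
\[
\partial_t^\alpha (\widehat{f_J})_\xi(t) = \frac{1}{(2\pi)^m}\int_{\mathbb{T}^m} e^{-i\xi\cdot x}\,\partial_t^\alpha f_J(t,x)\,\mathrm{d}x,
\]
where $\partial_t$ denotes the derivative in the local coordinates on $\chi_j(\Omega_j)$.

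From here the estimate is immediate: since $|e^{-i\xi\cdot x}|=1$ and the normalized measure of $\mathbb{T}^m$ equals $1$, taking absolute values and the supremum over $t\in\chi_j(K\cap\Omega_j)$ and over $|\alpha|\le\ell$ yields
\[
\big\|(\widehat{f_J})_\xi\big\|_{\ell,\chi_j(K\cap\Omega_j)} \le \sup_{t,\,x}\ \sup_{|\alpha|\le\ell}\big|\partial_t^\alpha f_J(t,x)\big|,
\]
the supremum being over $t\in\chi_j(K\cap\Omega_j)$ and $x\in\mathbb{T}^m$. Summing over $j\in J_K$ and over the multi-indices $J$ with $|J|=q$, the right-hand side is bounded by the seminorm $\|f\|_{\ell,K\times\mathbb{T}^m}$ of $f$ associated with the compact set $K\times\mathbb{T}^m\subset M\times\mathbb{T}^m$ (built from the product charts $\chi_j\times\mathrm{id}$), whence $\|\widehat{f}_\xi\|_{\ell,K}\le\|f\|_{\ell,K\times\mathbb{T}^m}$. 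This proves continuity, and I would record that the resulting bound is in fact \emph{uniform in} $\xi$, a feature to be used in the subsequent Fourier decomposition of $\mathbb{L}^0$.

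I do not expect a genuine obstacle here: the statement reduces to a ``differentiation under the integral sign together with $|e^{-i\xi\cdot x}|=1$'' estimate. The only points requiring care are bookkeeping ones—verifying that the partial Fourier transform and the coordinate derivatives commute (guaranteed because they act on disjoint groups of variables, and the coefficient-wise definition of $\widehat{f}_\xi$ is coordinate-invariant, as recalled above) and checking that the dominating quantity is indeed one of the defining seminorms of $\mathsf{\Lambda}^{0,q}C^\infty(M\times\mathbb{T}^m)$.
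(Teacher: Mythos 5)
Your proposal is correct and follows essentially the same route as the paper's proof: differentiate under the integral sign in the torus variable, use $|e^{-i\xi\cdot x}|=1$ to bound $\sup_{t}|\partial_t^\alpha(\widehat{f_J})_\xi|$ by $\sup_{t,x}|\partial_t^\alpha f_J|$ on $K\times\mathbb{T}^m$, and sum over a finite subatlas covering $K$. The observation that the bound is uniform in $\xi$ is a worthwhile addition but does not change the argument.
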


	\begin{proof}
		To prove continuity, we must show that for any seminorm 
		$\|\cdot\|_{\ell,K}$ defining the topology of 
		$\mathsf{\Lambda}^q C^\infty(M)$, 
		there exist a seminorm $\|\cdot\|_{\ell',K'}$ 
		on $\mathsf{\Lambda}^{0,q} C^\infty(M \times \mathbb{T}^m)$ 
		and a constant $C > 0$ such that 
		\[
		\|\widehat{f}_\xi\|_{\ell,K} \leq C\,\|f\|_{\ell',K'}.
		\]
		
		Let $\|\cdot\|_{\ell,K}$ be a seminorm on $\mathsf{\Lambda}^q C^\infty(M)$. 
		In local coordinates, the coefficients of $\widehat{f}_\xi$ are given by
		\[
		(\widehat{f_J})_\xi(t) = \frac{1}{(2\pi)^m} 
		\int_{\mathbb{T}^m} e^{-i\xi\cdot x}\, f_J(t,x)\, \mathrm{d}x.
		\]
		Differentiating with respect to $t$ gives
		\[
		\partial_t^\alpha (\widehat{f_J})_\xi(t)
		= \frac{1}{(2\pi)^m} 
		\int_{\mathbb{T}^m} e^{-i\xi\cdot x}\,
		(\partial_t^\alpha f_J)(t,x)\, \mathrm{d}x.
		\]
		Taking the supremum over $t \in K_0$, where $K_0$ is a compact set in the chart domain, we obtain
		\begin{align*}
			\sup_{t \in K_0} 
			|\partial_t^\alpha (\widehat{f_J})_\xi(t)|
			&\leq \sup_{t \in K_0} 
			\frac{1}{(2\pi)^m} 
			\int_{\mathbb{T}^m} 
			|(\partial_t^\alpha f_J)(t,x)|\, \mathrm{d}x \\
			&\leq \frac{1}{(2\pi)^m} 
			\int_{\mathbb{T}^m} 
			\sup_{t \in K_0} |(\partial_t^\alpha f_J)(t,x)|\, \mathrm{d}x \\
			&\leq 
			\sup_{(t,x) \in K_0 \times \mathbb{T}^m} 
			|(\partial_t^\alpha f_J)(t,x)|.
		\end{align*}
		This shows that the seminorms for the coefficients of $\widehat{f}_\xi$ on $M$ 
		are bounded by the corresponding seminorms for the coefficients of $f$ 
		on $M \times \mathbb{T}^m$. 
		Summing over a finite atlas covering a compact set $K \subset M$, 
		we conclude that $\|\widehat{f}_\xi\|_{\ell,K}$ is bounded by a corresponding seminorm for $f$ 
		on $M \times \mathbb{T}^m$ (specifically, one defined on $K \times \mathbb{T}^m$). 
		This establishes that the map 
		$f \mapsto \widehat{f}_\xi$ 
		is continuous from 
		$\mathsf{\Lambda}^{0,q} C^\infty(M \times \mathbb{T}^m)$ 
		to $\mathsf{\Lambda}^q C^\infty(M)$.
	\end{proof}

	\subsection{Projective Limits of Locally Convex Spaces}\label{FS_spaces} \
	
	Let $\{E_k\}_{k\in\mathbb{N}_0}$ be a sequence of locally convex topological vector spaces equipped with linear maps $\phi_{j,k}:E_k\to E_j$ for $j\geq k$, such that each $\phi_{j,k}$ is compact and satisfies the compatibility condition $\phi_{j,k}\circ\phi_{k,\ell}=\phi_{j,\ell}$. The projective limit of this system is the subspace $E$ of the product $\prod_{k\in\mathbb{N}_0}E_k$ defined by
	\[
	E=\{(x_k)_{k\in\mathbb{N}_0} : x_k = \phi_{k,j}(x_j), \ j\geq k\},
	\]
	endowed with the coarsest locally convex topology for which each projection
	\[
	p_j:E\to E_j,\quad p_j(x)=x_j,\quad j\in\mathbb{N}_0,
	\]
	is continuous. In this case we write $E=\projlim E_k$.
	
	If each $\phi_{j,k}$ is injective, then so is every projection $p_j$. In particular, for $j\geq k$ we may identify $E_j$ as a vector subspace of $E_k$, and $E$ as a subspace of all $E_k$. In this setting, $E$ can also be viewed as the intersection $\bigcap_{k\in\mathbb{N}_0} E_k$, endowed with the coarsest locally convex topology that makes each inclusion $E\hookrightarrow E_k$ continuous.
	
	H.~Komatsu proved in \cite{Komatsu1967} that a topological vector space is a Fréchet–Schwartz (FS) space if and only if it is the projective limit of a compact sequence of locally convex spaces. Furthermore, by \cite[Lemma~2]{Komatsu1967}, any compact sequence of locally convex spaces is equivalent to a compact sequence of Banach spaces. Indeed, let $\phi:X\to Y$ be a compact map and let $V\subset X$ be an absolutely convex neighbourhood of $0$ such that $\phi(V)=A\subset Y$ is a precompact, absolutely convex subset of $Y$. Since $V$ is a neighbourhood of $0$, it generates $X$, and therefore $\mathrm{ran}(\phi)\subset Y_A:=\mathrm{span}(\overline{A})$. If $\tilde{\phi}$ denotes the restriction of $\phi$ with codomain $Y_A$, we can factor $\phi$ as
	\[
	X\overset{\tilde\phi}{\longrightarrow} Y_A \overset{\iota}{\longrightarrow} Y,
	\]
	where $\iota:Y_A\to Y$ is the inclusion map. By \cite[Chap.~III, p.~8]{Bourbaki}, the space $Y_A$ is Banach when endowed with the Minkowski functional $p_A$ of $A$,
	\[
	p_A(y)=\inf\{r\geq 0:\ y\in rA\}.
	\]
	Moreover, $\tilde{\phi}$ is injective whenever $\phi$ is injective.
	
	Applying this construction to the sequence $\{E_k\}$, and assuming that each $\phi_{j,k}$ is injective, we obtain
	\[
	\cdots \longrightarrow E_2\overset{\tilde\phi_{1,2}}{\longrightarrow} \tilde{E}_1 
	\overset{\iota}{\longrightarrow} E_1
	\overset{\tilde\phi_{0,1}}{\longrightarrow} \tilde{E}_0 
	\overset{\iota}{\longrightarrow} E_0,
	\]
	where each $\tilde{E}_k$ is a Banach space, the maps $\tilde{\phi}_{j,j+1}$ are injective, and the inclusions $\iota$ are compact. Setting $\psi_{j,j+1}=\iota\circ\tilde{\phi}_{j,j+1}$ yields a compact sequence of Banach spaces equivalent to the original sequence. In particular, if the $\phi_{j,j+1}$ are injective, then so are the $\psi_{j,j+1}$, and we may identify the projective limit $E$ as a subspace of each $E_k$.
	
	We now recall a fundamental result, due to G.~Araújo~\cite{Araujo2017}, which characterizes the closedness of the range for continuous linear maps between FS or DFS spaces.
	
	\begin{lemma}\label{lema_gabriel}
		Let $L:X\to Y$ be a linear operator between FS or DFS spaces. 
		The following conditions are equivalent:
		\begin{enumerate}
			\item $\mathrm{ran}(L)$ is closed in $Y$;
			\item $\mathrm{ran}(^tL)$ is closed in $X'$;
			\item $\mathrm{ran}(L)$ is sequentially closed in $Y$;
			\item $\mathrm{ran}(^tL)$ is sequentially closed in $X'$;
			\item $\mathrm{ran}(L)=\ker(^tL)^\perp$;
			\item $\mathrm{ran}(^tL)=\ker(L)^\perp$.
		\end{enumerate}
	\end{lemma}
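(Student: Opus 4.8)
The plan is to reduce all six conditions to the classical closed range theorem, using only two structural features of FS/DFS spaces: their reflexivity (both classes are Montel, hence reflexive, so that $X''=X$, $Y''=Y$, and $^{tt}L=L$ under the canonical identifications) and the fact that closedness and sequential closedness agree for subspaces. Every implication I prove will be established for an \emph{arbitrary} continuous linear map between FS or DFS spaces; applying such an implication also to the transpose $^{t}L\colon Y'\to X'$ (again a map of the same type) and invoking $^{tt}L=L$ will then produce the companion statement for the dual conditions. Concretely, this $L\mapsto{}^{t}L$ symmetry interchanges $(1)\leftrightarrow(2)$, $(3)\leftrightarrow(4)$, and $(5)\leftrightarrow(6)$, so it suffices to prove the three equivalences $(1)\Leftrightarrow(5)$, $(1)\Leftrightarrow(3)$, and $(1)\Leftrightarrow(2)$; the equivalences $(2)\Leftrightarrow(6)$ and $(2)\Leftrightarrow(4)$ are then free, and the whole list collapses to a single equivalence class.

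For $(1)\Leftrightarrow(5)$ I would argue purely by duality. A direct computation gives $(\mathrm{ran}\,L)^\perp=\ker{}^{t}L$, so the bipolar theorem yields $\overline{\mathrm{ran}\,L}^{\,\sigma(Y,Y')}=(\ker{}^{t}L)^\perp$; since the closure of a convex set coincides with its weak closure (Mazur), this gives $\overline{\mathrm{ran}\,L}=(\ker{}^{t}L)^\perp$, and $(1)\Leftrightarrow(5)$ is immediate because $(\ker{}^{t}L)^\perp$ is always closed. Reusing this same fact for $^{t}L$ gives $\overline{\mathrm{ran}\,{}^{t}L}^{\,\sigma(X',X)}=(\ker L)^\perp$, and here the one extra point is that the bipolar theorem only controls the $\sigma(X',X)$-topology, whereas condition $(6)$ (resp. $(2)$) concerns the strong topology of $X'$. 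This is precisely where reflexivity enters: since $X''=X$, the topology $\sigma(X',X)$ coincides with the weak topology $\sigma(X',X'')$ of $X'$, so Mazur's theorem again equates weak-$*$ and strong closure for the convex set $\mathrm{ran}\,{}^{t}L$, upgrading the weak-$*$ conclusion to a genuine (strong) one.

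For $(1)\Leftrightarrow(3)$ (hence $(2)\Leftrightarrow(4)$ by symmetry) only the direction ``sequentially closed $\Rightarrow$ closed'' requires comment. If the ambient space is FS it is metrizable and this is automatic; if it is DFS it is the strong dual of a Fréchet--Montel space, hence a sequential space, and again sequential closedness of a subspace implies closedness. Equivalently, one may use the regular compact inductive limit structure $E=\varinjlim E_n$ of a DFS space, in which every convergent sequence lies in and converges in a single Banach step $E_n$; this lets one realize each point of $\overline{\mathrm{ran}\,L}$ as the limit of a sequence drawn from $\mathrm{ran}\,L$, which is exactly what sequential closedness needs.

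The core equivalence $(1)\Leftrightarrow(2)$ carries the real analytic content and is the main obstacle. Here I would invoke the closed range theorem in its Fréchet-space form: for a continuous linear map whose domain is Fréchet (indeed Pt\'ak/$B$-complete), $\mathrm{ran}\,L$ is closed if and only if $\mathrm{ran}\,{}^{t}L$ is $\sigma(X',X)$-closed. The subtlety---and the reason the lemma is stated for FS/DFS rather than arbitrary Fréchet spaces---is that the classical theorem naturally controls the weak-$*$ topology on the dual, while $(2)$ asks for strong closedness; I would close this gap exactly as above, since by reflexivity and Mazur a subspace of $X'$ is $\sigma(X',X)$-closed precisely when it is strongly closed. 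When the domain of the map under consideration fails to be metrizable (the genuinely DFS case), I would not reprove anything but instead pass to $^{t}L$, whose domain is then FS, apply the Fréchet version there, and transport the conclusion back through $^{tt}L=L$. Assembling the three equivalences with the $L\leftrightarrow{}^{t}L$ symmetry then yields the mutual equivalence of all six conditions.
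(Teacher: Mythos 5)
The paper offers no proof of this lemma at all---its ``proof'' is the citation to Ara\'ujo's Lemma~2.2---so you are reconstructing an argument the authors omit. Your architecture is the standard one and most of it is sound: bipolar theorem plus Mazur for $(1)\Leftrightarrow(5)$, reflexivity to identify $\sigma(X',X)$ with the weak topology of $X'_\beta$ and hence get $(2)\Leftrightarrow(6)$, metrizability resp.\ sequentiality of FS resp.\ DFS spaces for $(1)\Leftrightarrow(3)$ and $(2)\Leftrightarrow(4)$, and the $L\leftrightarrow{}^tL$ symmetry via $^{tt}L=L$. Those parts are correct.

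The gap is in the core equivalence $(1)\Leftrightarrow(2)$ when $X$ and $Y$ are of different types, which the statement of the lemma permits. The theorem you invoke---``for a continuous linear map whose domain is Fr\'echet (indeed Pt\'ak), $\mathrm{ran}\,L$ is closed iff $\mathrm{ran}\,{}^tL$ is $\sigma(X',X)$-closed''---is not a theorem without a hypothesis on the codomain: take $X=\ell^1$ with its norm topology, $Y=\ell^1$ with the topology $\sigma(\ell^1,c_0)$, and $L$ the identity; then $\mathrm{ran}\,L=Y$ is closed while $\mathrm{ran}\,{}^tL=c_0$ is a proper weak-$*$ dense subspace of $\ell^\infty=X'$. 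The classical closed range theorem needs \emph{both} spaces Fr\'echet, and weakening the domain hypothesis to Pt\'ak does not delete the codomain hypothesis; what one actually needs is that closed subspaces of $Y$ be barrelled, so that Pt\'ak's open mapping theorem applies to $L:X\to\overline{\mathrm{ran}\,L}$ and Hahn--Banach then yields $\mathrm{ran}\,{}^tL=(\ker L)^\perp$. This holds because closed subspaces of FS (resp.\ DFS) spaces are again FS (resp.\ DFS), but you never supply it. Moreover, your fallback for non-metrizable domains---pass to ${}^tL$, ``whose domain is then FS''---only works when $Y$ is also DFS: if $X$ is DFS and $Y$ is FS, the domain of ${}^tL$ is $Y'$, which is DFS, so neither $L$ nor ${}^tL$ is a map between Fr\'echet spaces and your reduction covers nothing. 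The clean repair is to prove the single implication ``$\mathrm{ran}\,u$ closed $\Rightarrow\mathrm{ran}\,{}^tu=(\ker u)^\perp$'' for continuous $u$ with $B$-complete domain and with barrelled closed subspaces in the codomain (both properties hold for FS and for DFS spaces), apply it once to $L$ and once to ${}^tL$, and use reflexivity and Mazur to pass between weak-$*$ and strong closedness. For the two configurations the paper actually uses (both spaces FS, or both DFS), your argument as written does go through.
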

	
	\begin{proof}
		See \cite[Lemma~2.2]{Araujo2017}.
	\end{proof}
	
	In the next subsection, we show that $C^\infty(M)$ is the projective limit of a compact sequence of nested Fréchet spaces, and therefore is an FS space.
	
	\subsection{Local Sobolev Spaces}\label{Loc_Sob} \
	
	Let \( U \subset \mathbb{R}^n \) be an open set. 
	For \( k \in \mathbb{N}_0 \), we denote by \( \mathscr{H}^k(U) \) the standard Sobolev space
	\[
	\mathscr{H}^k(U)
	= \big\{ u \in \mathscr{D}'(U) : 
	\partial^\alpha u \in L^2(U) \text{ for all } \alpha \in \mathbb{N}_0^n,\ |\alpha| \le k \big\},
	\]
	where derivatives are understood in the sense of distributions.
	
	Let \( \mathscr{H}_0^k(U) \) denote the closure of \( C_c^\infty(U) \) in \( \mathscr{H}^k(U) \). 
	For every \( \varphi \in C_c^\infty(U) \), the multiplication operator 
	\( u \mapsto \varphi u \) acts continuously from \( \mathscr{H}^k(U) \) into \( \mathscr{H}_0^k(U) \).
	
	The local Sobolev space of order \( k \in \mathbb{N}_0 \) is defined by
	\[
	\mathscr{H}^k_{\mathrm{loc}}(U)
	= \big\{ u \in \mathscr{D}'(U) : 
	\varphi u \in \mathscr{H}^k(U) \text{ for every } \varphi \in C_c^\infty(U) \big\}.
	\]
	
	\begin{remark}
		For the purposes of this work, it will be convenient to use the equivalent characterizations of local Sobolev spaces described by N.~Antoni{\'c} and K.~Burazin in~\cite{H_loc}. According to their results, the definition of 
		\( \mathscr{H}^k_{\mathrm{loc}}(U) \) can be restricted to a family of test functions  \( \Phi \subset C_c^\infty(U) \) satisfying the following property: 
		for every \( x \in U \), there exists \( \varphi \in \Phi \) such that \( \Re(\varphi)(x) > 0 \).
		By~\cite[Lemma~3]{H_loc}, this new definition of \( \mathscr{H}^k_{\mathrm{loc}}(U) \) is independent of the choice of such a family~\( \Phi \).
	\end{remark}
	
	We endow each \( \mathscr{H}^k_{\mathrm{loc}}(U) \) with the locally convex topology generated by the seminorms
	\[
	\|u\|_{k,\varphi} := \|\varphi u\|_{\mathscr{H}^k(U)}, 
	\qquad \varphi \in \Phi.
	\]
	These seminorms make \( \mathscr{H}^k_{\mathrm{loc}}(U) \) a Fréchet space~\cite[Theorem~1]{H_loc}.
	Moreover, the canonical embeddings 
	\( C^\infty(U) \hookrightarrow \mathscr{H}^k_{\mathrm{loc}}(U) \), 
	\( k \in \mathbb{N}_0 \), are continuous (see~\cite[Lemma~5.5]{Petersen}).
	
	Given a compact subset \( K \Subset U \), we also define
	\[
	\mathscr{H}_K^k(U)
	= \big\{ u \in \mathscr{H}^k(U) : \mathrm{supp}(u) \subset K \big\},
	\]
	which is a Banach subspace of \( \mathscr{H}^k(U) \).
	Then we set
	\[
	\mathscr{H}_c^k(U)
	= \bigcup_{K \Subset U} \mathscr{H}_K^k(U),
	\]
	endowed with the usual inductive limit topology.
	This makes \( \mathscr{H}_c^k(U) \) a locally convex (non-metrizable) topological vector space. 
	Moreover, the embeddings
	\[
	C_c^\infty(U) \hookrightarrow \mathscr{H}_c^k(U) \hookrightarrow \mathscr{H}^k(U)
	\]
	are continuous (see again~\cite[Lemma~5.2]{Petersen}).
	
	\begin{theorem}\label{comp_emb_Hloc}
		For each \( k \in \mathbb{N}_0 \), the embedding 
		\( \mathscr{H}_{\mathrm{loc}}^{k+1}(U) \hookrightarrow \mathscr{H}_{\mathrm{loc}}^k(U) \)
		is compact.
	\end{theorem}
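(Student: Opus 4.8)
The plan is to deduce the statement from the classical Rellich--Kondrachov theorem for compactly supported functions, combined with a diagonal extraction adapted to the defining seminorms $\|u\|_{k,\varphi}=\|\varphi u\|_{\mathscr{H}^k(U)}$. Concretely, I will prove that the (continuous) inclusion $\iota\colon\mathscr{H}^{k+1}_{\mathrm{loc}}(U)\hookrightarrow\mathscr{H}^k_{\mathrm{loc}}(U)$ maps bounded subsets of $\mathscr{H}^{k+1}_{\mathrm{loc}}(U)$ to relatively compact subsets of $\mathscr{H}^k_{\mathrm{loc}}(U)$; since $\mathscr{H}^k_{\mathrm{loc}}(U)$ is a complete metrizable space, it suffices to show that every bounded sequence admits a subsequence that is Cauchy with respect to each defining seminorm. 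Continuity of $\iota$ is immediate, as $\|\varphi u\|_{\mathscr{H}^k(U)}\le\|\varphi u\|_{\mathscr{H}^{k+1}(U)}$ for every $\varphi$.

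First I would fix, once and for all, a \emph{countable} family $\Phi=\{\varphi_j\}_{j\in\mathbb{N}}\subset C_c^\infty(U)$ enjoying the covering property of the Remark above, for instance a locally finite smooth partition of unity subordinate to a countable exhaustion of $U$ by relatively compact open sets, so that $\Re(\varphi_j)=\varphi_j\ge 0$ and for each $x\in U$ some $\varphi_j(x)>0$. By \cite{H_loc} the local Sobolev topologies are independent of the admissible family, so this choice entails no loss of generality and renders both spaces genuine Fréchet spaces defined by a countable system of seminorms, which is what makes the diagonal argument available.

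Next, let $(u_n)_n$ be bounded in $\mathscr{H}^{k+1}_{\mathrm{loc}}(U)$, so that $C_j:=\sup_n\|\varphi_j u_n\|_{\mathscr{H}^{k+1}(U)}<\infty$ for every $j$. For fixed $j$, the sequence $(\varphi_j u_n)_n$ is supported in the compact set $K_j=\mathrm{supp}(\varphi_j)$ and bounded in the Banach space $\mathscr{H}^{k+1}_{K_j}(U)$; by Rellich--Kondrachov the inclusion $\mathscr{H}^{k+1}_{K_j}(U)\hookrightarrow\mathscr{H}^k(U)$ is compact, hence $(\varphi_j u_n)_n$ has a subsequence converging in $\mathscr{H}^k(U)$. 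Performing this extraction successively for $j=1,2,\dots$ and passing to the diagonal, I obtain a single subsequence $(u_{n_\ell})_\ell$ such that $(\varphi_j u_{n_\ell})_\ell$ converges in $\mathscr{H}^k(U)$ for every $j$. Consequently each seminorm $\|u_{n_\ell}-u_{n_{\ell'}}\|_{k,\varphi_j}=\|\varphi_j(u_{n_\ell}-u_{n_{\ell'}})\|_{\mathscr{H}^k(U)}$ tends to $0$ as $\ell,\ell'\to\infty$, so $(u_{n_\ell})_\ell$ is Cauchy in $\mathscr{H}^k_{\mathrm{loc}}(U)$ and therefore converges by completeness. This shows that the image of any bounded set is relatively compact, i.e.\ that $\iota$ is compact.

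The main difficulty to keep in mind is genuinely the local, non-compactly-supported nature of these spaces rather than any delicate analytic estimate: no $0$-neighbourhood of $\mathscr{H}^{k+1}_{\mathrm{loc}}(U)$ is bounded, since a basic neighbourhood constrains only finitely many of the seminorms $\|\varphi_j u\|_{\mathscr{H}^{k+1}(U)}$, whose supports can never exhaust the open set $U$. This is precisely why the compactness must be formulated and established through bounded sets together with a countable diagonalisation over the whole family $\Phi$, the only quantitative input being the compactness of the fixed-support inclusions $\mathscr{H}^{k+1}_{K_j}(U)\hookrightarrow\mathscr{H}^k(U)$; the passage from componentwise convergence back to an honest limit in $\mathscr{H}^k_{\mathrm{loc}}(U)$ is then supplied for free by the completeness of the target.
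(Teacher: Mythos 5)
Your argument is correct and is essentially the expected one: for this statement the paper offers no proof of its own but simply cites \cite[Theorem~10]{H_loc}, while its proof of the manifold analogue (Theorem~\ref{comp_emb_Hloc_M}) follows exactly your route --- Rellich--Kondrachov applied to $\varphi_j u_n$ on each fixed compact support, followed by extraction of a subsequence converging with respect to every defining seminorm and an appeal to completeness. If anything your version is more careful than the paper's, since you make explicit the reduction to a countable admissible family $\Phi$, the diagonal extraction over $j$ (which the paper's manifold-case proof glosses over), and the fact that compactness here must be read as ``bounded sets have relatively compact image,'' which is the only tenable notion for these non-normable Fr\'echet spaces.
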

	
	\begin{proof}
		See~\cite[Theorem~10]{H_loc}.
	\end{proof}
	
	\begin{remark}\label{cinfty_hloc}
		It is clear that 
		\( C^\infty(U) \subset \bigcap_{k\in\mathbb{N}_0} \mathscr{H}_{\mathrm{loc}}^k(U) \). 
		In fact, equality holds. 
		Indeed, let \( u \in \bigcap_{k\in\mathbb{N}_0} \mathscr{H}_{\mathrm{loc}}^k(U) \). 
		Then, for every bounded open subset \( V \subset U \), we have 
		\( u \in \mathscr{H}^k(V) \) for all \( k \in \mathbb{N}_0 \), 
		which implies \( u \in C^\infty(V) \). 
		Since smoothness is a local property, it follows that \( u \in C^\infty(U) \).
	\end{remark}
	
	By Theorem~\ref{comp_emb_Hloc}, if we endow \( C^\infty(U) \) with the projective limit topology induced by the sequence of compact embeddings 
	\(
	\mathscr{H}_{\mathrm{loc}}^{k+1}(U) \hookrightarrow \mathscr{H}_{\mathrm{loc}}^k(U)
	\),
	then \( C^\infty(U) \) becomes an FS space. 
	This topology is equivalent to the usual Fréchet topology of \( C^\infty(U) \).
		
	\begin{proposition}\label{int_Hloc}
		Consider \( C^\infty(U) \) endowed with its usual Fréchet topology, and let \( E \) be the Fréchet space obtained by endowing the vector space \( C^\infty(U) \) with the projective limit topology induced by the spaces \( \mathscr{H}_{\mathrm{loc}}^k(U) \). 
		Then the identity map \(I : C^\infty(U) \rightarrow E\) is a topological isomorphism.
	\end{proposition}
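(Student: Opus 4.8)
The plan is to realize the statement as an instance of the open mapping theorem. Write $F$ for the vector space $C^\infty(U)$ equipped with its usual Fréchet topology, generated by the seminorms $\|\cdot\|_{m,K}$, and recall that $E$ is the \emph{same} underlying vector space $\bigcap_{k\in\mathbb{N}_0}\mathscr{H}^k_{\mathrm{loc}}(U)=C^\infty(U)$ (Remark~\ref{cinfty_hloc}), now carrying the projective limit topology, generated by the seminorms $\|u\|_{k,\varphi}=\|\varphi u\|_{\mathscr{H}^k(U)}$, $k\in\mathbb{N}_0$, $\varphi\in\Phi$. Both are Fréchet spaces---$F$ by definition and $E$ as an FS space, as noted just before the proposition---and $I$ is the identity, hence a linear bijection. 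Consequently it suffices to prove that $I:F\to E$ is continuous: the open mapping theorem then forces $I^{-1}$ to be continuous as well, so that $I$ is a topological isomorphism.

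To establish continuity of $I:F\to E$, I would invoke the universal property of the projective limit: a linear map into $E=\projlim\mathscr{H}^k_{\mathrm{loc}}(U)$ is continuous precisely when its composition with each canonical map $E\to\mathscr{H}^k_{\mathrm{loc}}(U)$ is continuous, and these compositions are exactly the inclusions $C^\infty(U)\hookrightarrow\mathscr{H}^k_{\mathrm{loc}}(U)$, which are continuous by \cite[Lemma~5.5]{Petersen}. For a self-contained argument one may instead compare seminorms directly: fixing $\varphi\in\Phi$ with $K:=\mathrm{supp}\,\varphi$, Leibniz's rule gives, for $|\alpha|\le k$,
\[
\|\partial^\alpha(\varphi u)\|_{L^2(U)}
\le \sum_{\beta\le\alpha}\binom{\alpha}{\beta}\,\|\partial^\beta\varphi\|_{L^\infty}\,|K|^{1/2}\,\sup_{K}|\partial^{\alpha-\beta}u|,
\]
whence $\|u\|_{k,\varphi}\le C_\varphi\,\|u\|_{k,K}$ for a constant $C_\varphi$ depending only on $\varphi$ and $n$. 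Either route shows $I$ to be continuous, and the open mapping theorem then delivers the conclusion.

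The only real subtlety hides in the reverse inequality, namely the continuity of $I^{-1}:E\to F$, which is precisely what the open mapping theorem supplies for free. Were one to prove it by hand, the hard part would be to recover the sup-norm of $\partial^\alpha u$ on a compact set from Sobolev data: one would use a local Sobolev embedding $\mathscr{H}^k(B)\hookrightarrow C^m(\overline{B})$, valid for $k>m+n/2$, together with the observation that, near any point where $\Re(\varphi)>0$, the reciprocal $1/\varphi$ is smooth, so that $u=(1/\varphi)(\varphi u)$ transfers the bound $\|\varphi u\|_{\mathscr{H}^k(U)}$ into control of $u$ in $\mathscr{H}^k(B)$, and hence in $C^m(\overline{B})$; a compactness argument covering $K$ by finitely many such balls, one for each relevant $\varphi\in\Phi$, would then complete the estimate. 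Routing the proof through the open mapping theorem lets us bypass exactly this computation, reducing the whole statement to the elementary continuity of $I:F\to E$.
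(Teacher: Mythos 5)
Your proof is correct and follows essentially the same route as the paper: continuity of $I$ via the continuous inclusions $C^\infty(U)\hookrightarrow\mathscr{H}^k_{\mathrm{loc}}(U)$ and the universal property of the projective limit, then the Open Mapping Theorem between Fréchet spaces to obtain continuity of $I^{-1}$. The explicit Leibniz-rule estimate and the sketch of a direct converse are welcome additions but not needed.
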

	
	\begin{proof}
		Let \( I : C^\infty(U) \to E \) be the identity map.  
		For each \( k \in \mathbb{N}_0 \), the canonical inclusion 
		\[
		C^\infty(U) \hookrightarrow \mathscr{H}_{\mathrm{loc}}^k(U)
		\]
		is continuous (see, as above,~\cite[Lemma~5.5]{Petersen}).  
		Since the projective topology of \(E\) is the coarsest locally convex topology making all these inclusions continuous, it follows that \( I \) itself is continuous.
		
		The map \( I \) is bijective by definition.  
		Moreover, both \( C^\infty(U) \) and \( E \) are Fréchet spaces.  
		By the Open Mapping Theorem, a bijective continuous linear map between Fréchet spaces has a continuous inverse.  
		Hence \( I^{-1} \) is continuous, and therefore \( I \) is a topological isomorphism.
	\end{proof}
	
	\begin{remark}
		This result shows that the projective limit construction of \( C^\infty(U) \) via local Sobolev spaces does not modify its topology. 
		It only provides an alternative characterization of the Fréchet–Schwartz structure of \( C^\infty(U) \) in terms of the compact sequence 
		\(
		\mathscr{H}_{\mathrm{loc}}^{k+1}(U) \hookrightarrow \mathscr{H}_{\mathrm{loc}}^k(U)
		\).
	\end{remark}
	
	Now, let \( M \) be an \(n\)-dimensional smooth paracompact manifold, and let 
	\(\mathcal{A} = \{(\Omega_j, \chi_j)\}_{j \in \mathbb{N}}\) 
	be a countable, locally finite atlas of \( M \) such that each \( \Omega_j \) is precompact.  
	Let \( \Phi = \{\varphi_j\}_{j \in \mathbb{N}} \subset C_c^\infty(M) \) be a partition of unity subordinate to the cover \( \{\Omega_j\}_{j \in \mathbb{N}} \).
	
	Following~\cite{Kumano-go}, we define, for each \( k \in \mathbb{N}_0 \),
	\[
	\mathscr{H}_{\mathrm{loc}}^k(M)
	:= 
	\Big\{
	u \in \mathscr{D}'(M)
	:\ 
	(\varphi_j u)\circ \chi_j^{-1} 
	\in 
	\mathscr{H}^k(\tilde{\Omega}_j),
	\quad 
	\forall j \in \mathbb{N}
	\Big\},
	\]
	where \( \tilde{\Omega}_j := \chi_j(\Omega_j) \subset \mathbb{R}^n \) is assumed to be bounded.
	
	Clearly, the family \( \Phi = \{\varphi_j\}_{j \in \mathbb{N}} \subset C_c^\infty(M) \) satisfies the property that, for every \( x \in M \), there exists \( \varphi_j \in \Phi \) such that \( \Re(\varphi_j)(x) > 0 \).  
	In this setting, each \( \mathscr{H}_{\mathrm{loc}}^k(M) \) becomes a locally convex topological vector space when endowed with the family of seminorms
	\[
	\|u\|_{k,j} 
	:= 
	\| (\varphi_j u)\circ \chi_j^{-1} \|_{\mathscr{H}^k(\tilde{\Omega}_j)},
	\qquad 
	j \in \mathbb{N}.
	\]
	
	This topology does not depend on the particular choice of the atlas \( \mathcal{A} \) or of the subordinate partition of unity \( \Phi \), 
	and it makes \( \mathscr{H}_{\mathrm{loc}}^k(M) \) a Fréchet space.  
	Furthermore, the canonical embeddings
	\[
	C^\infty(M) \hookrightarrow \mathscr{H}_{\mathrm{loc}}^k(M), 
	\qquad 
	k \in \mathbb{N}_0,
	\]
	are continuous.
	
	\begin{proposition}
		For each \( k \in \mathbb{N}_0 \), the space \( \mathscr{H}_{\mathrm{loc}}^k(M) \) is a Fréchet space.
	\end{proposition}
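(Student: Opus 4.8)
The plan is to verify the three defining properties of a Fréchet space for $\mathscr{H}_{\mathrm{loc}}^k(M)$: that its topology is (i) metrizable, (ii) Hausdorff, and (iii) complete. Since the space is already given the locally convex topology generated by the \emph{countable} family of seminorms $\{\|\cdot\|_{k,j}\}_{j\in\mathbb{N}}$, it is automatically pseudometrizable, so (i) reduces to the separation property (ii). Moreover, for a metrizable topological vector space completeness is equivalent to sequential completeness, so for (iii) it suffices to show that every Cauchy sequence converges. The substance of the argument therefore lies in the Hausdorff property and, above all, in completeness.

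For the Hausdorff property I would show that the seminorms separate points. Suppose $\|u\|_{k,j}=0$ for every $j\in\mathbb{N}$. Then $(\varphi_j u)\circ\chi_j^{-1}$ is the zero element of $\mathscr{H}^k(\tilde\Omega_j)$, hence vanishes as a distribution on $\tilde\Omega_j$; pulling back by the diffeomorphism $\chi_j$ gives $\varphi_j u=0$ in $\mathscr{D}'(\Omega_j)$, and thus in $\mathscr{D}'(M)$. Since $\Phi$ is a partition of unity subordinate to the locally finite cover $\{\Omega_j\}$, for any $\psi\in C_c^\infty(M)$ the decomposition $\psi=\sum_j\varphi_j\psi$ is a finite sum, whence $\langle u,\psi\rangle=\sum_j\langle\varphi_j u,\psi\rangle=0$. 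Therefore $u=0$, and the topology is Hausdorff, hence metrizable.

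The main step is completeness, and the key obstacle is to reconstruct a global limit distribution from the local $\mathscr{H}^k$-limits and to verify that it lies in $\mathscr{H}_{\mathrm{loc}}^k(M)$ with convergence in the correct topology. Let $\{u_\nu\}$ be a Cauchy sequence. For each $j$ the sequence $\{(\varphi_j u_\nu)\circ\chi_j^{-1}\}_\nu$ is Cauchy in the Banach space $\mathscr{H}^k(\tilde\Omega_j)$, hence converges to some $g_j\in\mathscr{H}^k(\tilde\Omega_j)$. I would first argue that $\{u_\nu\}$ converges in $\mathscr{D}'(M)$: for $\psi\in C_c^\infty(M)$ the finite decomposition $\langle u_\nu,\psi\rangle=\sum_j\langle\varphi_j u_\nu,\psi\rangle$ has each summand equal, after the change of variables $\chi_j$, to the pairing of $(\varphi_j u_\nu)\circ\chi_j^{-1}$ with a fixed test function on $\tilde\Omega_j$, which converges because $(\varphi_j u_\nu)\circ\chi_j^{-1}\to g_j$ in $\mathscr{H}^k(\tilde\Omega_j)\hookrightarrow\mathscr{D}'(\tilde\Omega_j)$. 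By the sequential completeness of $\mathscr{D}'(M)$ there is then $u\in\mathscr{D}'(M)$ with $u_\nu\to u$.

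It remains to identify $u$ and to upgrade the convergence. Multiplication by $\varphi_j$ and composition with $\chi_j^{-1}$ are continuous operations on distributions, so $(\varphi_j u_\nu)\circ\chi_j^{-1}\to(\varphi_j u)\circ\chi_j^{-1}$ in $\mathscr{D}'(\tilde\Omega_j)$; comparing with the $\mathscr{H}^k$-convergence to $g_j$ and invoking uniqueness of distributional limits yields $(\varphi_j u)\circ\chi_j^{-1}=g_j\in\mathscr{H}^k(\tilde\Omega_j)$ for every $j$. Hence $u\in\mathscr{H}_{\mathrm{loc}}^k(M)$, and moreover
\[
\|u_\nu-u\|_{k,j}=\big\|(\varphi_j u_\nu)\circ\chi_j^{-1}-g_j\big\|_{\mathscr{H}^k(\tilde\Omega_j)}\longrightarrow 0
\qquad(\nu\to\infty)
\]
for each $j$, that is, $u_\nu\to u$ in $\mathscr{H}_{\mathrm{loc}}^k(M)$. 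This establishes completeness and, together with the previous paragraphs, the Fréchet property. The only delicate point is the passage through $\mathscr{D}'(M)$: one must first secure a genuine distributional limit of the Cauchy sequence and then match it, via uniqueness of limits, with the Banach-space limits $g_j$ produced locally in each chart.
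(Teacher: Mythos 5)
Your proof is correct, but the reconstruction of the global limit is carried out by a genuinely different route from the paper's. The paper glues the local Banach-space limits $\tilde u_j$ directly, by setting $u:=(\tilde u_j\circ\chi_j)/\varphi_j$ on $\{\varphi_j\neq 0\}$ and checking consistency on overlaps via the identity $\varphi_i(\varphi_j u_n)=\varphi_j(\varphi_i u_n)$ passed to the limit; this treats the limit pointwise and requires dividing by a partition-of-unity function, which is somewhat informal when the objects involved are distributions rather than functions. You instead first produce a weak limit: using local finiteness of the cover you write $\langle u_\nu,\psi\rangle$ as a finite sum of pairings that converge because $\mathscr{H}^k(\tilde\Omega_j)\hookrightarrow\mathscr{D}'(\tilde\Omega_j)$ continuously, invoke the sequential completeness of $\mathscr{D}'(M)$ to obtain $u$, and then identify $(\varphi_j u)\circ\chi_j^{-1}$ with $g_j$ by uniqueness of distributional limits. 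Your version buys a cleaner and fully distributional argument that sidesteps the division step entirely, at the modest cost of invoking the weak-$*$ sequential completeness of $\mathscr{D}'(M)$; you also supply the Hausdorff/separation verification, which the paper leaves implicit. Both arguments establish the same conclusion, and the remaining steps (metrizability from the countable seminorm family, and the final convergence $\|u_\nu-u\|_{k,j}\to 0$) coincide.
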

	
	\begin{proof}
		The topology of \( \mathscr{H}_{\mathrm{loc}}^k(M) \) is defined by the countable family of seminorms
		\[
		\|u\|_{k,j} = \|(\varphi_j u)\circ \chi_j^{-1}\|_{\mathscr{H}^k(\tilde{\Omega}_j)},
		\qquad j \in \mathbb{N},
		\]
		hence it is metrizable.  
		For instance, one may consider the metric
		\[
		d_k(u,v) 
		= 
		\sum_{j \in \mathbb{N}} 
		\frac{1}{2^j} 
		\frac{\|u - v\|_{k,j}}{1 + \|u - v\|_{k,j}}.
		\]
		We now show that \( (\mathscr{H}_{\mathrm{loc}}^k(M), d_k) \) is complete.
		
		Let \( \{u_n\}_{n\in\mathbb{N}} \) be a Cauchy sequence in \( \mathscr{H}_{\mathrm{loc}}^k(M) \).  
		Then, for each fixed \( j \in \mathbb{N} \), the sequence 
		\[
		\{(\varphi_j u_n)\circ \chi_j^{-1}\}_{n\in\mathbb{N}}
		\]
		is Cauchy in the Hilbert space \( \mathscr{H}^k(\tilde{\Omega}_j) \).  
		Since \( \mathscr{H}^k(\tilde{\Omega}_j) \) is complete, there exists 
		\(
		\tilde{u}_j \in \mathscr{H}^k(\tilde{\Omega}_j)
		\)
		such that
		\[
		(\varphi_j u_n)\circ \chi_j^{-1} \longrightarrow \tilde{u}_j
		\quad \text{in } \mathscr{H}^k(\tilde{\Omega}_j), \quad n \to \infty.
		\]
		
		We claim that there exists \( u \in \mathscr{H}_{\mathrm{loc}}^k(M) \) satisfying 
		\( (\varphi_j u)\circ \chi_j^{-1} = \tilde{u}_j \) for all \( j \).
		To define \( u \), note that for every \( x \in M \) there exists at least one \( j \) such that \( \varphi_j(x) \neq 0 \).
		On the set \( \{x : \varphi_j(x) \neq 0\} \), we may define
		\[
		u(x) := \frac{(\tilde{u}_j \circ \chi_j)(x)}{\varphi_j(x)}.
		\]
		We must check that this definition is independent of \(j\).  
		If \( \varphi_i(x) \neq 0 \) and \( \varphi_j(x) \neq 0 \), then on the overlap 
		\( \Omega_i \cap \Omega_j \) we have, for any $n$,
		\[
		\varphi_i(\varphi_j u_n)
		= 
		\varphi_j(\varphi_i u_n).
		\]
		Passing to the limit in \( \mathscr{H}^k(\tilde{\Omega}_i \cap \tilde{\Omega}_j) \)
		yields
		\(
		\varphi_i (\tilde{u}_j \circ \chi_j) = \varphi_j (U_i \circ \chi_i),
		\)
		which guarantees that the function \(u\) above is well defined, independently of the chart chosen.
		
		By construction, \( (\varphi_j u)\circ \chi_j^{-1} = \tilde{u}_j \in \mathscr{H}^k(\tilde{\Omega}_j) \) for all \(j\),
		so \( u \in \mathscr{H}_{\mathrm{loc}}^k(M) \).
		Finally,
		\[
		\|u_n - u\|_{k,j} 
		= 
		\|(\varphi_j(u_n - u))\circ \chi_j^{-1}\|_{\mathscr{H}^k(\tilde{\Omega}_j)} 
		= 
		\|(\varphi_j u_n)\circ \chi_j^{-1} - \tilde{u}_j\|_{\mathscr{H}^k(\tilde{\Omega}_j)} 
		\longrightarrow 0,
		\]
		as \( n \to \infty \) for each \( j \in \mathbb{N} \).
		Hence \( u_n \to u \) in \( \mathscr{H}_{\mathrm{loc}}^k(M) \), showing that the space is complete.
	\end{proof}

	\begin{proposition}\label{cont_emb_M}
		For each \( k \in \mathbb{N}_0 \), the embedding 
		\( C^\infty(M) \hookrightarrow \mathscr{H}_{\mathrm{loc}}^k(M) \) is continuous.
	\end{proposition}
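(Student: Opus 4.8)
The plan is to verify continuity one seminorm at a time. Since the topology of $\mathscr{H}_{\mathrm{loc}}^k(M)$ is generated by the countable family $\{\|\cdot\|_{k,j}\}_{j\in\mathbb{N}}$, it suffices to produce, for each fixed $j$, a continuous seminorm on $C^\infty(M)$ and a constant $C_j>0$ such that $\|f\|_{k,j}\le C_j\,\|f\|_{k,K_j}$ for every $f\in C^\infty(M)$. The natural choice is $K_j:=\mathrm{supp}(\varphi_j)$, which is a compact subset of the single chart domain $\Omega_j$, so that $S_j:=\chi_j(\mathrm{supp}\,\varphi_j)=\chi_j(K_j\cap\Omega_j)$ is a fixed compact subset of the bounded set $\tilde\Omega_j$.

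First I would observe that $g:=(\varphi_j f)\circ\chi_j^{-1}$ is a smooth function supported in $S_j$, so every $L^2$-integral defining the $\mathscr{H}^k$-norm is really an integral over the relatively compact set $S_j$. This lets me pass from the $L^2$-based norm to a supremum norm: for each multi-index $\alpha$ with $|\alpha|\le k$,
\[
\int_{\tilde\Omega_j}|\partial^\alpha g|^2\,\mathrm{d}x=\int_{S_j}|\partial^\alpha g|^2\,\mathrm{d}x\le |S_j|\,\sup_{S_j}|\partial^\alpha g|^2 ,
\]
whence $\|g\|_{\mathscr{H}^k(\tilde\Omega_j)}$ is controlled by $|S_j|^{1/2}$ times the $C^k$-seminorm $\sup_{x\in S_j}\sup_{|\alpha|\le k}|\partial^\alpha g(x)|$, up to a combinatorial factor counting multi-indices of length at most $k$.

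Next I would remove the cutoff using the Leibniz rule. Writing $g=(\varphi_j\circ\chi_j^{-1})\,(f\circ\chi_j^{-1})$ and expanding $\partial^\alpha g$, each resulting term carries a derivative of the fixed function $\varphi_j\circ\chi_j^{-1}$, whose $C^k$-norm over $S_j$ is a finite constant, multiplied by a derivative of $f\circ\chi_j^{-1}$ of order at most $k$. Taking suprema over $S_j=\chi_j(K_j\cap\Omega_j)$ yields a bound of the form
\[
\sup_{S_j}\sup_{|\alpha|\le k}|\partial^\alpha g|\le C\,\|f\circ\chi_j^{-1}\|_{k,\,\chi_j(K_j\cap\Omega_j)} .
\]
Finally, since $K_j\subset\Omega_j$, the index $j$ lies in the finite covering set $J_{K_j}$, so the right-hand side is a single (nonnegative) summand of the full $C^\infty(M)$-seminorm $\|f\|_{k,K_j}$ defined in \eqref{semi_lK}, and is therefore dominated by it. Chaining the three estimates gives $\|f\|_{k,j}\le C_j\,\|f\|_{k,K_j}$, establishing continuity.

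I expect the only genuine subtlety to be bookkeeping rather than analysis. The key point on which everything rests is that $\varphi_j f$ has compact support inside a single chart, so that the domain of integration in the Sobolev norm is relatively compact and the passage from the $L^2$-type norm to the supremum norm is legitimate; one must also check that the constant absorbing the derivatives of the fixed function $\varphi_j\circ\chi_j^{-1}$ on $S_j$ is genuinely finite, which is immediate since $\varphi_j\circ\chi_j^{-1}\in C_c^\infty(\tilde\Omega_j)$. There is no deep difficulty here: the statement simply records quantitatively the fact that the $C^\infty$ topology is finer than every local Sobolev topology.
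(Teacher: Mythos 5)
Your proof is correct and follows essentially the same route as the paper: fix $j$, and show that the seminorm $\|f\|_{k,j}=\|(\varphi_j f)\circ\chi_j^{-1}\|_{\mathscr{H}^k(\tilde\Omega_j)}$ is dominated by a $C^\infty(M)$-seminorm supported near $\mathrm{supp}(\varphi_j)$. The only difference is that the paper argues softly (continuity of $u\mapsto\varphi_j u$, of the chart pullback, and of the inclusion $C_c^\infty(\tilde\Omega_j)\hookrightarrow\mathscr{H}^k(\tilde\Omega_j)$), whereas you make the underlying quantitative estimate explicit via the $L^2$-to-supremum bound and the Leibniz rule; both are valid.
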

	
	\begin{proof}
		Let \( \Phi = \{\varphi_j\}_{j \in \mathbb{N}} \subset C_c^\infty(M) \) 
		be the partition of unity and \( \mathcal{A} = \{(\Omega_j, \chi_j)\} \) the atlas defining the seminorms
		\[
		\|u\|_{k,j} = \|(\varphi_j u)\circ \chi_j^{-1}\|_{\mathscr{H}^k(\tilde{\Omega}_j)}.
		\]
		For each fixed \(j\), the operator \(u \mapsto \varphi_j u\) is continuous 
		from \(C^\infty(M)\) to \(C_c^\infty(\Omega_j)\), 
		and the inclusions \(C_c^\infty(\tilde{\Omega}_j) \hookrightarrow \mathscr{H}^k(\tilde{\Omega}_j)\),
		as well as the compositions with \(\chi_j^{-1}\), are continuous.  
		Hence, each seminorm \(\|\cdot\|_{k,j}\) is continuous on \(C^\infty(M)\), 
		which proves the continuity of the embedding.
	\end{proof}
	
	\begin{theorem}\label{comp_emb_Hloc_M}
		For each \( k \in \mathbb{N}_0 \), the embedding \(\mathscr{H}_{\mathrm{loc}}^{k+1}(M) \hookrightarrow \mathscr{H}_{\mathrm{loc}}^{k}(M)\) is compact.
	\end{theorem}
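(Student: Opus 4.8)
The plan is to reduce the statement to the Euclidean local Sobolev compactness already recorded in Theorem~\ref{comp_emb_Hloc} (equivalently, to the Rellich–Kondrachov theorem for compactly supported functions) and then to patch the local information together by a diagonal argument over the countable atlas. Since $\mathscr{H}_{\mathrm{loc}}^k(M)$ is metrizable and complete, it suffices to show that the inclusion sends every bounded subset of $\mathscr{H}_{\mathrm{loc}}^{k+1}(M)$ to a relatively compact subset of $\mathscr{H}_{\mathrm{loc}}^k(M)$; by metrizability this reduces to the sequential statement that every sequence drawn from a bounded set admits a subsequence converging in $\mathscr{H}_{\mathrm{loc}}^k(M)$. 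Continuity of the inclusion is immediate, since the continuous embedding $\mathscr{H}^{k+1}(\tilde\Omega_j)\hookrightarrow\mathscr{H}^k(\tilde\Omega_j)$ gives $\|u\|_{k,j}\le\|u\|_{k+1,j}$ for every chart index $j$.

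First I would fix a bounded sequence $\{u_\ell\}_\ell$ in $\mathscr{H}_{\mathrm{loc}}^{k+1}(M)$, so that $\sup_\ell\|u_\ell\|_{k+1,j}<\infty$ for each $j\in\mathbb{N}$. For a fixed chart $(\Omega_j,\chi_j)$ set $v_{j,\ell}:=(\varphi_j u_\ell)\circ\chi_j^{-1}$. Each $v_{j,\ell}$ is supported in the fixed compact set $K_j:=\chi_j(\mathrm{supp}\,\varphi_j)\Subset\tilde\Omega_j$, and $\{v_{j,\ell}\}_\ell$ is bounded in $\mathscr{H}^{k+1}(\tilde\Omega_j)$. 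Because all these functions share a common compact support, the Rellich–Kondrachov theorem (equivalently, the compactness furnished by Theorem~\ref{comp_emb_Hloc}) shows that $\{v_{j,\ell}\}_\ell$ is relatively compact in $\mathscr{H}^k(\tilde\Omega_j)$; no regularity of $\partial\tilde\Omega_j$ is required precisely because of the uniform compact support.

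Next I would run a Cantor diagonal extraction over the countable set of charts: from $\{u_\ell\}$ extract a subsequence along which $v_{1,\ell}$ converges in $\mathscr{H}^k(\tilde\Omega_1)$, then a further subsequence along which $v_{2,\ell}$ converges in $\mathscr{H}^k(\tilde\Omega_2)$, and so on. The resulting diagonal subsequence, still denoted $\{u_\ell\}$, satisfies $v_{j,\ell}\to\tilde u_j$ in $\mathscr{H}^k(\tilde\Omega_j)$ for every $j$. It then remains to produce a single $u\in\mathscr{H}_{\mathrm{loc}}^k(M)$ with $(\varphi_j u)\circ\chi_j^{-1}=\tilde u_j$ for all $j$; this is exactly the gluing step carried out in the completeness proof above, namely passing to the limit in the identities $\varphi_i(\varphi_j u_\ell)=\varphi_j(\varphi_i u_\ell)$ on the overlaps $\Omega_i\cap\Omega_j$ shows that the local limits $\tilde u_j$ are compatible and determine a global element $u$. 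Since $\|u_\ell-u\|_{k,j}=\|v_{j,\ell}-\tilde u_j\|_{\mathscr{H}^k(\tilde\Omega_j)}\to0$ for every $j$, the subsequence converges to $u$ in the Fréchet topology of $\mathscr{H}_{\mathrm{loc}}^k(M)$.

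This establishes that bounded sets are sent to relatively compact sets, which is the asserted compactness of the embedding. The main obstacle is precisely the passage from chart-by-chart relative compactness to a single global limit: one must organize the extraction as a genuine diagonal procedure over the countably many, locally finite charts and then verify that the family $\{\tilde u_j\}$ of local limits glues consistently into an element of $\mathscr{H}_{\mathrm{loc}}^k(M)$. Both points rest on the countability and local finiteness of the atlas and on the compatibility relations over chart overlaps; once these are secured, completeness of $\mathscr{H}_{\mathrm{loc}}^k(M)$ delivers the limit and metrizability upgrades the sequential conclusion to relative compactness.
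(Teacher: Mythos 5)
Your proof is correct and follows essentially the same route as the paper's: localize via the partition of unity, apply the Rellich--Kondrachov theorem on each chart domain (where the uniform compact support makes boundary regularity irrelevant), and glue the local limits $\tilde u_j$ into a global $u\in\mathscr{H}_{\mathrm{loc}}^k(M)$ using the overlap compatibility already established in the completeness proof. The only difference is that you make the Cantor diagonal extraction over the countable atlas explicit, whereas the paper's proof extracts a subsequence for a single fixed $j$ and then asserts convergence for all $j$ --- your version is the more careful rendering of the same argument.
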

	
	\begin{proof}
		Let \( \{u_n\} \) be a bounded sequence in \( \mathscr{H}_{\mathrm{loc}}^{k+1}(M) \).
		We show that it admits a convergent subsequence in \( \mathscr{H}_{\mathrm{loc}}^{k}(M) \).
		
		Fix \( j \in \mathbb{N} \).
		By the continuity of multiplication by \( \varphi_j \) and of composition with \( \chi_j^{-1} \),
		the sequence \( \{(\varphi_j u_n)\circ\chi_j^{-1}\} \) is bounded in \( \mathscr{H}_0^{k+1}(\tilde{\Omega}_j) \).
		By the Rellich–Kondrachov theorem (see, e.g., \cite[Theorem~7.1]{Wloka}),
		the inclusion
		\(
		\mathscr{H}_0^{k+1}(\tilde{\Omega}_j) \hookrightarrow \mathscr{H}_0^{k}(\tilde{\Omega}_j)
		\)
		is compact.
		Hence, there exists a subsequence, denoted again by \( \{u_n\} \), such that
		\[
		(\varphi_j u_n)\circ\chi_j^{-1} \to \tilde{u}_j 
		\quad \text{in } \mathscr{H}^{k}(\tilde{\Omega}_j),
		\]
		for some \( \tilde{u}_j \in \mathscr{H}^{k}(\tilde{\Omega}_j) \).
		
		As in the proof of Proposition~\ref{cont_emb_M}, 
		the family \( \{\tilde{u}_j\} \) defines a well-defined \( u \in \mathscr{H}_{\mathrm{loc}}^k(M) \),
		by the local relation \( (\varphi_j u)\circ\chi_j^{-1} = \tilde{u}_j \).
		Finally,
		\[
		\|u_n - u\|_{k,j}
		= 
		\|(\varphi_j u_n)\circ\chi_j^{-1} - \tilde{u}_j\|_{\mathscr{H}^k(\tilde{\Omega}_j)}
		\longrightarrow 0
		\quad \text{for all } j,
		\]
		so \( u_n \to u \) in \( \mathscr{H}_{\mathrm{loc}}^k(M) \).
		Therefore, the embedding is compact.
	\end{proof}
	
	\begin{theorem}\label{int_Hloc_M}
		We have
		\[
		C^\infty(M)
		=
		\bigcap_{k\in\mathbb{N}_0} \mathscr{H}_{\mathrm{loc}}^k(M),
		\]
		and the usual Fréchet topology on \( C^\infty(M) \) is equivalent to the projective limit topology induced by the sequence 
		\(\{\mathscr{H}_{\mathrm{loc}}^k(M)\}_{k\in\mathbb{N}_0}\).
	\end{theorem}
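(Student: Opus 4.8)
The plan is to obtain Theorem~\ref{int_Hloc_M} as the manifold counterpart of the two scalar facts already established over open subsets of $\mathbb{R}^n$: the set-theoretic identity of Remark~\ref{cinfty_hloc} and the topological identification of Proposition~\ref{int_Hloc}. Accordingly, I would split the argument into two parts, first proving the equality of underlying sets $C^\infty(M)=\bigcap_{k}\mathscr{H}^k_{\mathrm{loc}}(M)$, and then deducing the equivalence of the two Fréchet topologies via the Open Mapping Theorem, exactly mirroring the structure of the proof of Proposition~\ref{int_Hloc}.

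For the set equality, the inclusion $C^\infty(M)\subset\bigcap_k\mathscr{H}^k_{\mathrm{loc}}(M)$ is immediate, since each embedding $C^\infty(M)\hookrightarrow\mathscr{H}^k_{\mathrm{loc}}(M)$ of Proposition~\ref{cont_emb_M} is defined on all of $C^\infty(M)$. For the reverse inclusion, let $u\in\bigcap_k\mathscr{H}^k_{\mathrm{loc}}(M)$. By definition, $(\varphi_j u)\circ\chi_j^{-1}\in\mathscr{H}^k(\tilde{\Omega}_j)$ for every $j\in\mathbb{N}$ and every $k\in\mathbb{N}_0$; since $\mathscr{H}^k(\tilde{\Omega}_j)\subset\mathscr{H}^k_{\mathrm{loc}}(\tilde{\Omega}_j)$, Remark~\ref{cinfty_hloc} gives $(\varphi_j u)\circ\chi_j^{-1}\in\bigcap_k\mathscr{H}^k_{\mathrm{loc}}(\tilde{\Omega}_j)=C^\infty(\tilde{\Omega}_j)$, hence $\varphi_j u\in C^\infty(\Omega_j)$ for all $j$. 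To pass to global smoothness, fix $x_0\in M$ and choose $j$ with $\varphi_j(x_0)>0$; on the open neighbourhood $\{\varphi_j>0\}$ of $x_0$ one has $u=(\varphi_j u)/\varphi_j$, a quotient of smooth functions with non-vanishing denominator, hence smooth near $x_0$. As smoothness is a local property, $u\in C^\infty(M)$.

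For the topological statement, let $E$ denote the vector space $C^\infty(M)$ endowed with the projective limit topology, that is, the coarsest locally convex topology making every inclusion $E\hookrightarrow\mathscr{H}^k_{\mathrm{loc}}(M)$ continuous, equivalently the topology generated by the countable family of seminorms $\|\cdot\|_{k,j}$, $k\in\mathbb{N}_0$, $j\in\mathbb{N}$. This topology is metrizable, and $E$ is complete: a Cauchy sequence in $E$ is Cauchy, hence convergent, in each Fréchet space $\mathscr{H}^k_{\mathrm{loc}}(M)$, and the limits agree across levels by continuity of the connecting inclusions $\mathscr{H}^{k+1}_{\mathrm{loc}}(M)\hookrightarrow\mathscr{H}^{k}_{\mathrm{loc}}(M)$, producing a single limit in $\bigcap_k\mathscr{H}^k_{\mathrm{loc}}(M)=E$. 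Thus $E$ is a Fréchet space. The identity map $I:C^\infty(M)\to E$ is bijective by the set equality just proved and continuous by Proposition~\ref{cont_emb_M} together with the universal property of the projective topology. Since both $C^\infty(M)$ and $E$ are Fréchet, the Open Mapping Theorem shows that $I^{-1}$ is continuous, so $I$ is a topological isomorphism and the two topologies coincide.

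The only genuinely manifold-specific ingredient, compared with the scalar case, is the gluing step in the set equality, where the local pieces $\varphi_j u$ must be reassembled into one globally smooth function; this is precisely the partition-of-unity quotient argument used above, which also underlies the completeness proof for $\mathscr{H}^k_{\mathrm{loc}}(M)$. The remaining steps are formal consequences of the corresponding results over $\mathbb{R}^n$ and of the Open Mapping Theorem, so I do not anticipate any obstacle beyond routine bookkeeping; combined with the compactness of the embeddings in Theorem~\ref{comp_emb_Hloc_M}, this identification also exhibits $C^\infty(M)$ as an FS space.
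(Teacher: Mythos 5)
Your proposal is correct and follows essentially the same route as the paper, whose proof of Theorem~\ref{int_Hloc_M} is simply the statement that the argument is analogous to Proposition~\ref{int_Hloc}, using Proposition~\ref{cont_emb_M}, Theorem~\ref{comp_emb_Hloc_M}, and the Open Mapping Theorem. You merely fill in the details the paper leaves implicit — in particular the partition-of-unity quotient argument reducing the set equality to Remark~\ref{cinfty_hloc} — and these details are sound.
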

	
	\begin{proof}
		The proof is analogous to that of Proposition~\ref{int_Hloc}, 
		using Proposition~\ref{cont_emb_M}, 
		Theorem~\ref{comp_emb_Hloc_M}, 
		and the Open Mapping Theorem.
	\end{proof}
	
	Hence, \( C^\infty(M) \) can be regarded as the projective limit of a nested sequence of Fréchet spaces with compact and injective inclusions. 
	In particular, \( C^\infty(M) \) is a Fréchet–Schwartz (FS) space, and consequently its strong dual \( \mathcal{E}'(M) \) is a DFS space.
	
	Furthermore, by the construction discussed at the end of the previous subsection, 
	we can obtain a nested sequence of Banach spaces with compact and injective inclusions whose projective limit is topologically isomorphic to \( C^\infty(M) \).
	The same argument applies to spaces of smooth sections of vector bundles over a smooth paracompact manifold \( M \).
	
	In particular, if \( \mathbb{T}^m = \mathbb{R}^m / 2\pi\mathbb{Z}^m \) is the \(m\)-dimensional torus, 
	then, for each \( q = 0, 1, \dots, n \),
	\[
	\mathsf{\Lambda}^q C^\infty(M) \text{ and } 
	\mathsf{\Lambda}^q C^\infty(M \times \mathbb{T}^m)  \text{ are FS spaces.}
	\]

	\section{Global regularity and matrices of cycles}\label{sec:glregcycles}
	
	In this section, we briefly recall some results from~\cite{CKMT} 
	concerning the global hypoellipticity of the operator
	\[
	\mathbb{L}^0 : C^\infty(M\times\mathbb{T}^m) \longrightarrow 
	\mathsf{\Lambda}^{0,1} C^\infty(M\times\mathbb{T}^m),
	\]
	defined by
	\begin{equation}\label{L_0}
		\mathbb{L}^0 u = \mathrm{d}_t u + \sum_{k=1}^m \omega_k \wedge \partial_{x_k}u,
	\end{equation}
	where \( \mathrm{d}_t \) denotes the partial exterior derivative on \(M\),
	and \( \omega_1, \dots, \omega_m \) are real-valued, closed \(1\)-forms on \(M\).
	Recall that \( \mathbb{L}^0 \) is said to be \emph{globally hypoelliptic} if
	\[
	u \in \mathscr{D}'(M\times\mathbb{T}^m), \quad
	\mathbb{L}^0 u \in \mathsf{\Lambda}^{0,1} C^\infty(M\times\mathbb{T}^m)
	\ \Longrightarrow\ 
	u \in C^\infty(M\times\mathbb{T}^m).
	\]
	
	Let \( \overline{M} \) be a smooth compact manifold with boundary,
	and denote by \( M \) its interior. 
	A \emph{scattering metric} on \( M \) is a Riemannian metric \( g \) 
	which, in a collar neighborhood of the boundary,
	takes the form
	\[
	g = \frac{\mathrm{d}\varrho^2}{\varrho^4} + \frac{g'}{\varrho^2},
	\]
	where \( \varrho : \overline{M} \to [0,+\infty) \) 
	is a boundary defining function 
	and \( g' \) is a smooth symmetric \(2\)-tensor 
	that restricts to a Riemannian metric on \( \partial M \).
	In this case, we say that \( (\overline{M}, g) \) is a \emph{scattering manifold}.
	This class is quite broad: every manifold that is the interior of a compact manifold with boundary admits such metrics.
	An important subclass of scattering manifolds consists of manifolds with cylindrical ends 
	(see the Appendix of~\cite{CD2021}).
	For further details on the theory of scattering manifolds, we refer to 
	\cite{Melrose_APS, Melrose_SST, Melrose_GST}.
	
	Throughout this section, we assume that \( M \) is a connected, orientable, 
	\( n \)-dimensional manifold which is the interior of a scattering manifold.
	Let \( H_{\mathrm{dR}}^1(\overline{M}, \partial M) \) denote the first relative de~Rham cohomology group of compactly supported smooth forms on \(M\),
	and define \(H_{\partial M}^1(M)\) as the image of the map
	\[
	H_{\mathrm{dR}}^1(\overline{M}, \partial M) \longrightarrow H_{\mathrm{dR}}^1(\overline{M}), 
	\ [\omega] \mapsto [\omega].
	\]
	
	We may regard \( H_{\partial M}^1(M) \) as a subspace of \( H_{\mathrm{dR}}^1(M) \): indeed, since \( H_{\mathrm{dR}}^1(\overline{M}) \) 	is the de~Rham cohomology of forms smooth up to the boundary, 
	its elements are determined by their restrictions to the interior \(M\), 
	so that \( H_{\mathrm{dR}}^1(\overline{M}) \subset H_{\mathrm{dR}}^1(M) \) naturally.
	
	We denote by \( \mathsf{\Lambda}^1 C_{\partial M}^\infty(M) \) 	the space of smooth closed \(1\)-forms \( \omega \) on \(M\) 	whose cohomology class satisfies \( [\omega] \in H_{\partial M}^1(M) \).
	
	\begin{definition}
		Let \( \omega \in \mathsf{\Lambda}^1 C^\infty(M) \) be a closed \(1\)-form on \(M\).
		We say that \( \omega \) is \emph{integral} if
		\[
		\frac{1}{2\pi} \int_\sigma \omega \in \mathbb{Z}
		\quad \text{for every smooth \(1\)-cycle } \sigma \text{ in } M.
		\]
	\end{definition}
	
	\begin{proposition}\label{uni_cov}
		Let \( \Pi : \hat{M} \to M \) be the universal covering of \(M\),
		and let \( \omega \) be a real-valued closed \(1\)-form on \(M\).
		Then the following conditions are equivalent:
		\begin{enumerate}
			\item[\textnormal{(i)}] \( \omega \) is integral;
			\item[\textnormal{(ii)}] 
			For every \( \psi \in C^\infty(\hat{M}) \) satisfying
			\(\mathrm{d}\psi = \Pi^* \omega\), we have
			\[
			P, Q \in \hat{M}, \ \Pi(P) = \Pi(Q)
			\ \Longrightarrow\ 
			\psi(P) - \psi(Q) \in 2\pi \mathbb{Z}.
			\]
		\end{enumerate}
	\end{proposition}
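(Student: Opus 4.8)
The plan is to exploit that the universal cover $\hat{M}$ is simply connected, so that $H^1_{\mathrm{dR}}(\hat{M})=0$ and the closed form $\Pi^*\omega$ (closed because pullback commutes with $\mathrm{d}$ and $\omega$ is closed) is automatically exact. Hence a global primitive $\psi$ with $\mathrm{d}\psi=\Pi^*\omega$ exists, and since $\hat{M}$ is connected any two such primitives differ by an additive constant. Consequently the quantity $\psi(P)-\psi(Q)$ does not depend on the chosen primitive, so the universal quantifier ``for every $\psi$'' in (ii) may be replaced by ``for one fixed $\psi$''. I would record this normalization first, as it lets me argue with a single primitive throughout.

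The central ingredient is the path-integral representation of the jump $\psi(P)-\psi(Q)$. Given $P,Q\in\hat{M}$, I would choose any smooth path $\gamma:[0,1]\to\hat{M}$ from $Q$ to $P$ (available since $\hat{M}$ is path-connected) and apply the fundamental theorem of calculus along $\gamma$ together with $\mathrm{d}\psi=\Pi^*\omega$ to obtain
\[
\psi(P)-\psi(Q)=\int_\gamma \mathrm{d}\psi=\int_\gamma \Pi^*\omega=\int_{\Pi\circ\gamma}\omega.
\]
The hypothesis $\Pi(P)=\Pi(Q)$ is exactly what makes the projected curve $\sigma_\gamma:=\Pi\circ\gamma$ a smooth loop in $M$, hence a smooth $1$-cycle. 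This identity is the bridge between the two conditions.

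For (i)$\Rightarrow$(ii) the conclusion is then immediate: $\sigma_\gamma$ is a cycle, so integrality of $\omega$ gives $\frac{1}{2\pi}\int_{\sigma_\gamma}\omega\in\mathbb{Z}$, that is, $\psi(P)-\psi(Q)\in 2\pi\mathbb{Z}$. For the converse (ii)$\Rightarrow$(i) I would first reduce an arbitrary smooth $1$-cycle $\sigma$ to a finite integer combination of smooth based loops: since $\omega$ is closed, $\int_\sigma\omega$ depends only on the class $[\sigma]\in H_1(M;\mathbb{Z})$ (by Stokes' theorem), and this group is generated by classes of smooth loops. For a single loop $c$ based at a point $p$, I would invoke the path-lifting property of the covering $\Pi$ to lift $c$ to a path $\tilde{c}$ in $\hat{M}$ starting at a chosen $Q\in\Pi^{-1}(p)$; since $c$ is a loop, its endpoint $P=\tilde{c}(1)$ again lies in $\Pi^{-1}(p)$, so $\Pi(P)=\Pi(Q)$. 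Applying the bridge identity with $\gamma=\tilde{c}$ gives $\int_c\omega=\psi(P)-\psi(Q)\in 2\pi\mathbb{Z}$ by (ii), and summing over the loop decomposition yields $\int_\sigma\omega\in 2\pi\mathbb{Z}$.

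The main obstacle I anticipate is the homological bookkeeping in (ii)$\Rightarrow$(i): being careful that integration of the closed form $\omega$ over a smooth $1$-cycle is a homology invariant, and that one may replace the given cycle by a sum of smooth loops without altering the integral. Once this reduction is pinned down, the covering-space lifting and the fundamental-theorem-of-calculus computation are routine. I would also keep track of orientation and parametrization conventions so that the factor $2\pi$ and the sign in $\psi(P)-\psi(Q)=\int_{\Pi\circ\gamma}\omega$ remain consistent throughout both implications.
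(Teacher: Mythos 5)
Your proof is correct. The paper does not actually prove this proposition in the text---it simply cites \cite[Proposition~3.4]{CKMT}---but your argument (exactness of $\Pi^*\omega$ on the simply connected cover, uniqueness of the primitive up to a constant, the bridge identity $\psi(P)-\psi(Q)=\int_{\Pi\circ\gamma}\omega$ for a path $\gamma$ from $Q$ to $P$, and path lifting plus the homology invariance of $\int_\sigma\omega$ for the converse) is the standard one and is complete as outlined.
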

	
	\begin{proof}
		See~\cite[Proposition~3.4]{CKMT}.
	\end{proof}

	\begin{definition}
		Let \( \omega_1, \dots, \omega_m \in \mathsf{\Lambda}^1 C_{\partial M}^\infty(M) \)
		be a family of real-valued closed \(1\)-forms on \(M\).
		We say that the family \( \boldsymbol{\omega} = (\omega_1, \dots, \omega_m) \) is:
		\begin{enumerate}
			\item \emph{rational} if there exists \( \xi \in \mathbb{Z}^m \setminus \{0\} \) such that
			\[
			\xi \cdot \boldsymbol{\omega} := \sum_{k=1}^{m} \xi_k \omega_k
			\]
			is an integral \(1\)-form;
			
			\item \emph{Liouville} if \( \boldsymbol{\omega} \) is not rational and there exist
			a sequence of closed integral \(1\)-forms \( \{\omega_j\} \)
			and a sequence \( \{\xi^{(j)}\} \subset \mathbb{Z}^m \) with \( |\xi^{(j)}| \to \infty \),
			such that the family
			\[
			\bigl\{\, |\xi^{(j)}|^j\,(\xi^{(j)} \cdot \boldsymbol{\omega} - \omega_j) \,\bigr\}
			\]
			is bounded in \( \mathsf{\Lambda}^1 C^\infty(M) \).
		\end{enumerate}
	\end{definition}
	
	Let 
	\(
	[\vartheta_1], \dots, [\vartheta_d]
	\)
	be a basis of \( H_{\partial M}^1(M) \),
	and let 
	\(
	\sigma_1, \dots, \sigma_d
	\)
	be smooth loops whose homology classes form a basis of a subspace of 
	\( H_1(M; \mathbb{R}) \)
	dual to \( H_{\partial M}^1(M) \) in the sense that
	\[
	\int_{\sigma_\ell} \vartheta_k = \delta_{\ell k},
	\qquad k,\ell = 1,\dots,d.
	\]
	
	Given a family 
	\( \boldsymbol{\omega} = (\omega_1, \dots, \omega_m) \),
	we associate to it the \emph{matrix of cycles}
	\( A(\boldsymbol{\omega}) \in M_{d\times m}(\mathbb{R}) \),
	whose entries are defined by
	\[
	A(\boldsymbol{\omega})_{\ell k}
	= 
	\frac{1}{2\pi}
	\int_{\sigma_\ell} \omega_k.
	\]
	Then, for every \( \xi \in \mathbb{Z}^m \),
	\[
	A(\boldsymbol{\omega})\,\xi
	=
	\frac{1}{2\pi}
	\left(
	\int_{\sigma_1} \xi\!\cdot\!\boldsymbol{\omega},
	\dots,
	\int_{\sigma_d} \xi\!\cdot\!\boldsymbol{\omega}
	\right)^{\!\top}.
	\]
	
	Notice that the definition of \( A(\boldsymbol{\omega}) \)
	depends only on the cohomology classes
	\(
	[\omega_1], \dots, [\omega_m] \in H_{\partial M}^1(M).
	\)
	Hence, we obtain a well-defined linear map
	\[
	A : H_{\partial M}^1(M)^m \longrightarrow M_{d\times m}(\mathbb{R}),
	\qquad
	\boldsymbol{\omega} \longmapsto A(\boldsymbol{\omega}).
	\]
	
	\begin{definition}
		A matrix \( \boldsymbol{A} \in M_{d\times m}(\mathbb{R}) \) is said to satisfy the 
		\emph{Diophantine condition} \((\mathrm{DC})\)
		if there exist constants \( C, \rho > 0 \) such that
		\begin{equation}\label{DC}
			|\eta + \boldsymbol{A}\xi| \geq C\, (|\eta| + |\xi|)^{-\rho},
			\tag{DC}
		\end{equation}
		for all \( (\xi, \eta) \in \mathbb{Z}^m \times \mathbb{Z}^d \setminus \{(0,0)\} \).
	\end{definition}
	
	\begin{lemma}\label{lemma_dioph}
		If a matrix \( \boldsymbol{A} \in M_{d\times m}(\mathbb{R}) \) satisfies \eqref{DC}, 
		then there exist constants \( C > 0 \) and \( N \in \mathbb{N} \) such that
		\[
		\max_{1 \leq \ell \leq d} 
		\big|\, 1 - e^{2\pi i\, \xi \cdot a_\ell}\, \big|
		\geq
		C\, (1 + |\xi|)^{-N},
		\quad 
		\forall\, \xi \in \mathbb{Z}^m \setminus \{0\},
		\]
		where \( a_\ell \in \mathbb{R}^m \) denotes the \(\ell\)-th row of \( \boldsymbol{A} \).
	\end{lemma}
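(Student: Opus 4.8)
The plan is to deduce the stated lower bound directly from the Diophantine condition \eqref{DC}, using the elementary relation between $|1-e^{2\pi i\theta}|$ and the distance from $\theta$ to the integers, and then to control the auxiliary integer vector that arises when passing from a nearest-integer distance to the two-sided quantity $|\eta+\boldsymbol{A}\xi|$. The first observation is that, since $a_\ell$ is the $\ell$-th row of $\boldsymbol{A}$, one has $\xi\cdot a_\ell=(\boldsymbol{A}\xi)_\ell$ for every $\ell$. I would then start from the identity $|1-e^{2\pi i\theta}|=2|\sin(\pi\theta)|$ together with the bound $|\sin(\pi\theta)|\geq 2\,\mathrm{dist}(\theta,\mathbb{Z})$, valid for all real $\theta$ (on the fundamental interval $\sin(\pi x)\geq 2x$ for $x\in[0,\tfrac12]$). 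Applying this coordinatewise yields
\[
\max_{1\leq\ell\leq d}\big|\,1-e^{2\pi i(\boldsymbol{A}\xi)_\ell}\,\big|
\;\geq\;
4\max_{1\leq\ell\leq d}\mathrm{dist}\big((\boldsymbol{A}\xi)_\ell,\mathbb{Z}\big).
\]

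Next I would relate the maximum of the coordinatewise distances to the Euclidean distance of $\boldsymbol{A}\xi$ to the lattice $\mathbb{Z}^d$. The nearest lattice point $\eta^\ast$ to $\boldsymbol{A}\xi$ is obtained by rounding each coordinate, so that $\mathrm{dist}(\boldsymbol{A}\xi,\mathbb{Z}^d)=\big(\sum_\ell \mathrm{dist}((\boldsymbol{A}\xi)_\ell,\mathbb{Z})^2\big)^{1/2}\leq \sqrt{d}\,\max_\ell \mathrm{dist}((\boldsymbol{A}\xi)_\ell,\mathbb{Z})$, giving
\[
\max_{1\leq\ell\leq d}\big|\,1-e^{2\pi i(\boldsymbol{A}\xi)_\ell}\,\big|
\;\geq\;
\frac{4}{\sqrt{d}}\,\mathrm{dist}\big(\boldsymbol{A}\xi,\mathbb{Z}^d\big).
\]
Setting $\eta:=-\eta^\ast$, one has $|\eta+\boldsymbol{A}\xi|=\mathrm{dist}(\boldsymbol{A}\xi,\mathbb{Z}^d)$; since $\xi\neq 0$, the pair $(\xi,\eta)$ lies in $\mathbb{Z}^m\times\mathbb{Z}^d\setminus\{(0,0)\}$, and \eqref{DC} applies to give $\mathrm{dist}(\boldsymbol{A}\xi,\mathbb{Z}^d)\geq C(|\eta|+|\xi|)^{-\rho}$.

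The only point requiring care — and essentially the sole content beyond bookkeeping — is that the exponent in \eqref{DC} involves $|\eta|$, whereas the target estimate must depend on $|\xi|$ alone. I would handle this by noting that $\eta^\ast$ lies within Euclidean distance $\tfrac12\sqrt{d}$ of $\boldsymbol{A}\xi$, so that $|\eta|=|\eta^\ast|\leq |\boldsymbol{A}\xi|+\tfrac12\sqrt{d}\leq \lVert\boldsymbol{A}\rVert\,|\xi|+\tfrac12\sqrt{d}$, where $\lVert\boldsymbol{A}\rVert$ denotes the operator norm. Consequently $|\eta|+|\xi|\leq C'(1+|\xi|)$ for a constant $C'$ depending only on $\boldsymbol{A}$ and $d$, whence $(|\eta|+|\xi|)^{-\rho}\geq (C')^{-\rho}(1+|\xi|)^{-\rho}$. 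Chaining the three displayed inequalities then produces the asserted bound with constant $\tfrac{4C}{\sqrt{d}}(C')^{-\rho}$, and one takes any integer $N\geq\rho$, using that $1+|\xi|\geq 1$ forces $(1+|\xi|)^{-\rho}\geq (1+|\xi|)^{-N}$. I do not anticipate any genuine obstacle here; the argument is a clean reduction, with the passage from $|\eta|$ to $|\xi|$ being the one step where the geometry of the lattice rounding must be invoked explicitly.
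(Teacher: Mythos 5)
Your argument is correct and complete: the chain $|1-e^{2\pi i\theta}|=2|\sin(\pi\theta)|\geq 4\,\mathrm{dist}(\theta,\mathbb{Z})$, the comparison of the coordinatewise maximum with $\mathrm{dist}(\boldsymbol{A}\xi,\mathbb{Z}^d)$, the choice $\eta=-\eta^\ast$ with $(\xi,\eta)\neq(0,0)$, and the bound $|\eta|\leq\lVert\boldsymbol{A}\rVert\,|\xi|+\tfrac12\sqrt{d}$ that converts $(|\eta|+|\xi|)^{-\rho}$ into $(1+|\xi|)^{-N}$ are all sound. The paper itself gives no proof here, deferring to Lemma~3.7 of the cited reference, and your argument is precisely the standard one that such a statement rests on, so there is nothing to flag.
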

	
	\begin{proof}
		See~\cite[Lemma~3.7]{CKMT}.
	\end{proof}
	
	\begin{proposition}\label{prop_rat_Liouville_matrix}
		Let \( \boldsymbol{\omega} = (\omega_1, \dots, \omega_m) \) 
		be a family of closed \(1\)-forms as above.
		Then:
		\begin{itemize}
			\item[(i)] \( \boldsymbol{\omega} \) is \emph{rational} 
			if and only if 
			\[
			A(\boldsymbol{\omega})(\mathbb{Z}^m \setminus \{0\}) 
			\cap \mathbb{Z}^d \neq \varnothing;
			\]
			\item[(ii)] \( \boldsymbol{\omega} \) is \emph{Liouville} 
			if and only if 
			it is not rational and \( A(\boldsymbol{\omega}) \) does not satisfy \eqref{DC}.
		\end{itemize}
	\end{proposition}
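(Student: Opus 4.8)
The plan is to reduce both equivalences to a single bridging principle: \emph{for $\xi\in\mathbb{Z}^m$, the closed $1$-form $\xi\cdot\boldsymbol{\omega}$, whose class lies in $H_{\partial M}^1(M)$, is integral if and only if its period vector $A(\boldsymbol{\omega})\xi=\tfrac{1}{2\pi}\big(\int_{\sigma_1}\xi\cdot\boldsymbol{\omega},\dots,\int_{\sigma_d}\xi\cdot\boldsymbol{\omega}\big)^{\top}$ lies in $\mathbb{Z}^d$.} One implication is immediate, since integrality forces \emph{all} periods---in particular those over the loops $\sigma_\ell$---into $2\pi\mathbb{Z}$. For the converse I would use that, because $[\xi\cdot\boldsymbol{\omega}]\in H_{\partial M}^1(M)$ and $\int_{\sigma_\ell}\vartheta_k=\delta_{\ell k}$, every period factors as $\int_\sigma \xi\cdot\boldsymbol{\omega}=\sum_\ell\big(\int_{\sigma_\ell}\xi\cdot\boldsymbol{\omega}\big)\int_\sigma\vartheta_\ell$; choosing the $\vartheta_\ell$ (equivalently the dual loops $\sigma_\ell$) so that they generate the integral lattice of $H_{\partial M}^1(M)$, as furnished by the cohomological setup of \cite{CKMT}, makes $\int_\sigma\vartheta_\ell\in\mathbb{Z}$ for every integral cycle $\sigma$, and integrality of $\xi\cdot\boldsymbol{\omega}$ follows. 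Securing this integral structure---that integer periods over the distinguished loops genuinely detect integrality---is, I expect, the most delicate point underpinning the whole statement; the remaining estimates are comparatively routine.

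Granting the bridge, part (i) is essentially a restatement: $\boldsymbol{\omega}$ is rational exactly when some $\xi\in\mathbb{Z}^m\setminus\{0\}$ renders $\xi\cdot\boldsymbol{\omega}$ integral, i.e. $A(\boldsymbol{\omega})\xi\in\mathbb{Z}^d$, which is precisely the condition $A(\boldsymbol{\omega})(\mathbb{Z}^m\setminus\{0\})\cap\mathbb{Z}^d\neq\varnothing$.

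For part (ii) I assume $\boldsymbol{\omega}$ is not rational and show it is Liouville iff $A(\boldsymbol{\omega})$ violates \eqref{DC}. In the forward direction, given Liouville data $\{\omega_j\}$ and $\{\xi^{(j)}\}$ with $|\xi^{(j)}|\to\infty$ and $\big\{|\xi^{(j)}|^j(\xi^{(j)}\cdot\boldsymbol{\omega}-\omega_j)\big\}$ bounded in $\mathsf{\Lambda}^1 C^\infty(M)$, I set $\eta^{(j)}=-\tfrac{1}{2\pi}\big(\int_{\sigma_\ell}\omega_j\big)_\ell$, which lies in $\mathbb{Z}^d$ since $\omega_j$ is integral. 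Because integration over each fixed loop $\sigma_\ell$ is dominated by a $C^0$-seminorm, the $C^\infty$-boundedness gives $|A(\boldsymbol{\omega})\xi^{(j)}+\eta^{(j)}|\le C\,|\xi^{(j)}|^{-j}$; as $|\eta^{(j)}|\lesssim|\xi^{(j)}|$, this superpolynomial decay contradicts \eqref{DC} for every $\rho$, so \eqref{DC} fails.

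For the converse, a failure of \eqref{DC} furnishes, for each $j$, a pair $(\xi^{(j)},\eta^{(j)})\neq(0,0)$ with $|A(\boldsymbol{\omega})\xi^{(j)}+\eta^{(j)}|<\big(|\xi^{(j)}|+|\eta^{(j)}|\big)^{-2j}$. Non-rationality excludes $\xi^{(j)}=0$ (for then $|\eta^{(j)}|\ge1$ would violate the bound) and excludes a bounded nonzero sequence (finitely many values give a uniform lower bound $\mathrm{dist}(A(\boldsymbol{\omega})\xi^{(j)},\mathbb{Z}^d)>0$ while the right-hand side tends to $0$), so $|\xi^{(j)}|\to\infty$ along a subsequence, with $|\eta^{(j)}|\lesssim|\xi^{(j)}|$. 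I then define
\[
\omega_j:=\xi^{(j)}\cdot\boldsymbol{\omega}-2\pi\sum_{\ell=1}^{d}\big(A(\boldsymbol{\omega})\xi^{(j)}+\eta^{(j)}\big)_\ell\,\vartheta_\ell,
\]
whose period vector over the $\sigma_\ell$ is exactly $-\eta^{(j)}\in\mathbb{Z}^d$, so $\omega_j$ is integral by the bridging principle. Since $\xi^{(j)}\cdot\boldsymbol{\omega}-\omega_j=2\pi\sum_\ell\big(A(\boldsymbol{\omega})\xi^{(j)}+\eta^{(j)}\big)_\ell\,\vartheta_\ell$ is a combination of the finitely many fixed smooth forms $\vartheta_\ell$ with coefficients that are superpolynomially small in $|\xi^{(j)}|$, the family $\big\{|\xi^{(j)}|^j(\xi^{(j)}\cdot\boldsymbol{\omega}-\omega_j)\big\}$ is bounded in $\mathsf{\Lambda}^1 C^\infty(M)$. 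Hence $\boldsymbol{\omega}$ is Liouville, completing the equivalence.
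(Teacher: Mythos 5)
The paper does not actually prove this proposition — its ``proof'' is a citation to \cite[Proposition~3.8]{CKMT} — so there is no in-text argument to compare yours against. Your reconstruction follows what is almost certainly the intended route: reduce everything to the equivalence ``$\xi\cdot\boldsymbol{\omega}$ is integral $\Leftrightarrow A(\boldsymbol{\omega})\xi\in\mathbb{Z}^d$'' and then translate the Liouville approximation data into period vectors and back. Part (i), the forward half of (ii), and the construction $\omega_j=\xi^{(j)}\cdot\boldsymbol{\omega}-2\pi\sum_{\ell}\bigl(A(\boldsymbol{\omega})\xi^{(j)}+\eta^{(j)}\bigr)_\ell\vartheta_\ell$ in the converse are exactly the right devices.

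Two points need attention. First, the converse half of your bridging principle is not free: with only the normalization $\int_{\sigma_\ell}\vartheta_k=\delta_{\ell k}$ stated in this paper, integer periods over the distinguished loops $\sigma_\ell$ do \emph{not} imply integer periods over every integral $1$-cycle; one needs the basis to be adapted to the integral lattice, i.e.\ $\int_\sigma\vartheta_\ell\in\mathbb{Z}$ for every integral cycle $\sigma$. You correctly identify this as the crux and defer it to \cite{CKMT}, but as written it remains an unproven hypothesis on which both (i) and the integrality of your $\omega_j$ rest; note the proposition itself is false for a generic real dual basis, so this choice must be made explicit. Second, there is a quantifier slip in negating \eqref{DC}: you fix $C=1$ and send only $\rho\to\infty$, and then your extraction of $|\xi^{(j)}|\to\infty$ fails in the edge case $|\xi^{(j)}|+|\eta^{(j)}|=1$ (e.g.\ $\xi^{(j)}=\pm e_k$, $\eta^{(j)}=0$, where the right-hand side equals $1$ for every $j$ and the inequality can hold for a single fixed pair forever). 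The standard fix is to negate with both constants, e.g.\ choose pairs with $|\eta^{(j)}+A(\boldsymbol{\omega})\xi^{(j)}|<j^{-1}\bigl(|\xi^{(j)}|+|\eta^{(j)}|\bigr)^{-2j}$; then a bounded family of witnesses would force $\eta+A(\boldsymbol{\omega})\xi=0$ for some fixed pair, contradicting non-rationality, and the rest of your argument goes through after the usual re-indexing of the subsequence (the exponent $i$ in $|\xi^{(j_i)}|^{i}$ only decreases under re-indexing, so boundedness is preserved).
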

	
	\begin{proof}
		See~\cite[Proposition~3.8]{CKMT}.
	\end{proof}
	
	Hence, the global regularity of \( \mathbb{L}^0 \) can be characterized as in the subsequent Theorem \ref{thm_global_hypo}.
	
	\begin{theorem}\label{thm_global_hypo}
		Let 
		\( \omega_1, \dots, \omega_m \in \mathsf{\Lambda}^1 C^\infty_{\partial M}(M) \)
		be a family of real-valued closed \(1\)-forms on \(M\).
		Then the operator
		\[
		\mathbb{L}^0 = \mathrm{d}_t + \sum_{k=1}^m \omega_k \wedge \partial_{x_k}
		\]
		is globally hypoelliptic 
		if and only if the family 
		\( \boldsymbol{\omega} = (\omega_1, \dots, \omega_m) \)
		is neither rational nor Liouville.
	\end{theorem}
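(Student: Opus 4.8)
The plan is to pass to the partial Fourier expansion in the torus variable and thereby reduce the global hypoellipticity of $\mathbb{L}^0$ to a uniform, frequency-by-frequency statement on $M$. Writing $u=\sum_{\xi\in\mathbb{Z}^m}\widehat u_\xi(t)\,e^{i\xi\cdot x}$, a direct computation gives $(\mathbb{L}^0u)^\wedge_\xi = L_\xi\widehat u_\xi$, where
\[
L_\xi := \mathrm{d}_t + i\,(\xi\cdot\boldsymbol{\omega})\wedge(\cdot),\qquad \xi\cdot\boldsymbol{\omega}=\sum_{k=1}^m\xi_k\omega_k,
\]
acts on $\mathscr{D}'(M)$. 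Using the Fourier characterization of smoothness recalled in Section~\ref{sec:partial_fourier} (and the continuity of the coefficient maps, Proposition~\ref{cont_fourier}), $\mathbb{L}^0$ is globally hypoelliptic precisely when, for every family $\{\widehat u_\xi\}\subset\mathscr{D}'(M)$ such that $\{L_\xi\widehat u_\xi\}$ obeys the rapid-decay smoothness condition, the family $\{\widehat u_\xi\}$ does as well. The algebraic heart of the reduction is the conjugation identity: since each $\omega_k$ is closed, $\xi\cdot\boldsymbol{\omega}$ is closed, so on the universal cover $\Pi:\hat M\to M$ one may write $\Pi^*(\xi\cdot\boldsymbol{\omega})=\mathrm{d}\psi_\xi$ and obtain $L_\xi = e^{-i\psi_\xi}\,\mathrm{d}_t\,(e^{i\psi_\xi}\,\cdot)$. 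Hence solving $L_\xi v=g$ reduces to the de~Rham equation $\mathrm{d}_t w=e^{i\psi_\xi}g$ with $w=e^{i\psi_\xi}v$, and the obstruction to descending $e^{i\psi_\xi}$ to $M$ — equivalently, the resonances of $L_\xi$ — is encoded in the monodromy factors $e^{2\pi i\,(A(\boldsymbol{\omega})\xi)_\ell}$ along the loops $\sigma_\ell$, by Proposition~\ref{uni_cov} and the definition of the matrix of cycles.

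For the necessity direction I would show that each forbidden condition destroys global hypoellipticity by an explicit construction. If $\boldsymbol{\omega}$ is rational, choose $\xi^{(0)}\neq 0$ with $\xi^{(0)}\cdot\boldsymbol{\omega}$ integral; then $e^{i\psi}$ with $\mathrm{d}\psi=\xi^{(0)}\cdot\boldsymbol{\omega}$ descends to $M$ by Proposition~\ref{uni_cov}, so $e^{-in\psi}\in\ker L_{n\xi^{(0)}}$ for every $n$, and $u=\sum_n c_n\,e^{-in\psi}e^{in\xi^{(0)}\cdot x}$ with a bounded, non-rapidly-decreasing sequence $\{c_n\}$ is a distribution, not smooth, yet $\mathbb{L}^0u=0$ is smooth. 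If $\boldsymbol{\omega}$ is Liouville, I would concentrate $u$ at the frequencies $\xi^{(j)}$ furnished by the definition, with coefficients that fail to decay rapidly; the super-polynomial smallness of $\xi^{(j)}\cdot\boldsymbol{\omega}$ relative to an integral form then forces $\{L_{\xi^{(j)}}\widehat u_{\xi^{(j)}}\}$ to decay rapidly, so $\mathbb{L}^0u$ is smooth while $u$ is not.

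For the sufficiency direction I would invoke Proposition~\ref{prop_rat_Liouville_matrix} to translate ``neither rational nor Liouville'' into the Diophantine condition \eqref{DC} for $A(\boldsymbol{\omega})$, and then Lemma~\ref{lemma_dioph} to obtain the lower bound $\max_\ell|1-e^{2\pi i\,(A(\boldsymbol{\omega})\xi)_\ell}|\geq C(1+|\xi|)^{-N}$. This bound controls the small denominators that appear when one inverts $L_\xi$ through the conjugation identity, namely when solving $\mathrm{d}_t w=e^{i\psi_\xi}g$ globally on $M$ while matching the prescribed monodromy. The outcome is a uniform estimate of the shape $\|\widehat u_\xi\|_{s}\leq C(1+|\xi|)^{N'}\|L_\xi\widehat u_\xi\|_{s'}$ (modulo the finite-dimensional contribution of $H^1_{\partial M}(M)$), whereby rapid decay of $\{L_\xi\widehat u_\xi\}$ transfers to $\{\widehat u_\xi\}$, and Section~\ref{sec:partial_fourier} then yields $u\in C^\infty(M\times\mathbb{T}^m)$.

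The main obstacle is exactly this last uniform estimate on the non-compact manifold $M$: one must solve the de~Rham equation $\mathrm{d}_t w=e^{i\psi_\xi}g$ globally with only polynomial loss in $|\xi|$, simultaneously controlling the finite-dimensional cohomological obstruction and the behaviour at the scattering ends. This is where the standing hypotheses are essential — that $M$ is the interior of a scattering manifold and that $[\omega_k]\in H^1_{\partial M}(M)$ — since they supply the Hodge-type decomposition and the finiteness $\dim H^1_{\mathrm{dR}}(M)<\infty$ that make the periods $A(\boldsymbol{\omega})\xi$ a \emph{complete} set of obstructions and keep both the primitive $\psi_\xi$ and the solution estimates uniform across the ends.
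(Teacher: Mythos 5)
The paper does not actually prove Theorem~\ref{thm_global_hypo} here: it is imported verbatim from \cite[Theorem~3.3]{CKMT}, and the material assembled around it in Section~\ref{sec:glregcycles} (Proposition~\ref{uni_cov}, Lemma~\ref{lemma_dioph}, Proposition~\ref{prop_rat_Liouville_matrix}) is precisely the toolkit your sketch deploys. Your outline --- partial Fourier reduction to $L_\xi=\mathrm{d}_t+i(\xi\cdot\boldsymbol{\omega})$, conjugation by $e^{i\psi_\xi}$ on the universal cover, explicit non-smooth (quasi-)kernel elements built from $e^{-i\psi}$ in the rational and Liouville cases, and inversion with small denominators $1-e^{2\pi i(A(\boldsymbol{\omega})\xi)_\ell}$ controlled by \eqref{DC} via Lemma~\ref{lemma_dioph} for sufficiency --- is the standard argument and matches the intended route; the one step left genuinely schematic is the uniform-in-$\xi$ estimate for the primitive of $e^{i\psi_\xi}\Pi^*\widehat{f}_\xi$ on the non-compact $M$, which you correctly identify as the point where the scattering structure and the hypothesis $[\omega_k]\in H^1_{\partial M}(M)$ must be used.
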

	
	\begin{proof}
		See~\cite[Theorem~3.3]{CKMT}.
	\end{proof}

	\section{Regularity and closedness of the range in an abstract setting}\label{sec:regclrng}
	
	In this section, we obtain conditions for the closedness of the range of operators on FS spaces under suitable assumptions. This characterization is based on an abstract notion of global hypoellipticity on pairs of topological vector spaces, introduced in~\cite{AFR22proc}.  As a consequence, we show that a weaker notion of global regularity implies the closedness of the range of differential operators acting on smooth sections of vector bundles over a general smooth paracompact manifold.	
	
	First, we recall some basic notions and results.  A pair of topological vector spaces is a 2-tuple 	\((E^\sharp, E)\) where \(E\) is a linear subspace of \(E^\sharp\),
	endowed with a topology finer than the one induced by \(E^\sharp\).  
		
	\begin{definition}
		Given two pairs $(E^\sharp,E),(F^\sharp,F)$ of topological vector spaces, we say that $P:(E^\sharp,E)\to(F^\sharp,F)$ is a map of pairs if $P:E^\sharp\to F^\sharp$ is continuous, $P(E)\subset F$ and the induced map $P:E\to F$ is continuous. In this case, we say that:
	\begin{itemize}
		\item \(P\) satisfies property \((\mathcal{H})\) if
		\(
		u\in E^\sharp,\ P u \in F \ \Rightarrow\  u \in E;
		\)
		\item \(P\) satisfies property \((\mathcal{H}')\) if
		\(
		u\in E^\sharp,\ P u \in F \ \Rightarrow\ 
		\exists v\in E \text{ such that } P u = P v.
		\)
	\end{itemize}
	\end{definition}

	Note that, if \(P:(E^\sharp,E)\to(F^\sharp,F)\) is a map of pairs,  then \((E^\sharp/\ker P,\ E/(E\cap\ker P))\) is again a pair of topological vector spaces, and \(P\) descends to a quotient map of pairs
	\[
	P' : (E^\sharp/\ker P,\ E/(E\cap\ker P)) \longrightarrow (F^\sharp,F).
	\]
	
	\begin{proposition}\label{prop_pairs}
		Let \(P:(E^\sharp,E)\to(F^\sharp,F)\) be a map of pairs with \(F^\sharp\) Hausdorff. 
		If \(P\) satisfies property \((\mathcal{H})\), 
		then the graph of \(P:E\to F\) is closed in \(E^\sharp\times F\), that is, the map
		\[
		\gamma_P : E \longrightarrow E^\sharp\times F,\quad u\mapsto (u,Pu),
		\]
		has closed range.
	\end{proposition}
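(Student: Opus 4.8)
The plan is to exhibit the range of $\gamma_P$, namely the graph
\[
\Gamma := \{(u, Pu) : u \in E\} \subset E^\sharp \times F,
\]
as the zero set of a suitable continuous map into the Hausdorff space $F^\sharp$, and then to invoke property $(\mathcal{H})$ to identify that zero set with $\Gamma$. This avoids any net or sequential argument, since the preimage of a closed set under a continuous map is automatically closed in the ambient topology.

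First I would define the map
\[
\Psi : E^\sharp \times F \longrightarrow F^\sharp, \qquad \Psi(a,b) = Pa - b,
\]
where the difference is taken in $F^\sharp$; this is meaningful because $b \in F \subset F^\sharp$. To check that $\Psi$ is continuous, observe that $(a,b) \mapsto Pa$ is the composition of the projection $E^\sharp \times F \to E^\sharp$ with the continuous map $P : E^\sharp \to F^\sharp$, while $(a,b) \mapsto b$ is the composition of the projection $E^\sharp \times F \to F$ with the inclusion $F \hookrightarrow F^\sharp$; the latter is continuous precisely because, by the definition of a pair, the topology of $F$ is finer than the one induced by $F^\sharp$. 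Since subtraction is continuous in the topological vector space $F^\sharp$, the map $\Psi$ is continuous. Because $F^\sharp$ is Hausdorff, the singleton $\{0\}$ is closed, so
\[
Z := \Psi^{-1}(\{0\}) = \{(a,b) \in E^\sharp \times F : Pa = b \text{ in } F^\sharp\}
\]
is a closed subset of $E^\sharp \times F$.

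It then remains to verify that $Z = \Gamma$, and this is the only step where property $(\mathcal{H})$ is used. The inclusion $\Gamma \subseteq Z$ is immediate, since for $u \in E$ one has $Pu \in F$ and $Pu - Pu = 0$. For the reverse inclusion, take $(a,b) \in Z$; then $b \in F$ and $Pa = b$, so in particular $Pa \in F$. Applying property $(\mathcal{H})$ to $a \in E^\sharp$ yields $a \in E$, whence $(a,b) = (a, Pa) \in \Gamma$. Thus $\Gamma = Z$ is closed, which is exactly the assertion that $\gamma_P$ has closed range. The argument is short, and the only real subtlety — indeed the conceptual heart of the statement — is the recognition that $(\mathcal{H})$ is precisely what forces a point of the closed set $Z$ to actually originate from $E$, so that no genuine analytic estimate is required; the Hausdorff hypothesis enters only to guarantee that $\{0\}$ is closed in $F^\sharp$.
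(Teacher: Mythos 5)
Your proof is correct. The paper does not reproduce an argument for this proposition at all --- it simply cites Lemma~5.2 of the reference \cite{AFR22proc} --- so there is nothing internal to compare against; your self-contained argument is a welcome substitute. The way you package the standard reasoning is clean: rather than chasing a net $(u_\alpha,Pu_\alpha)\to(u,f)$ and using continuity of $P:E^\sharp\to F^\sharp$ together with Hausdorffness to identify $f=Pu$, you realize the graph as $\Psi^{-1}(\{0\})$ for the continuous map $\Psi(a,b)=Pa-b$ into $F^\sharp$, which makes the roles of the three hypotheses transparent: continuity of $P$ on the big space and of the inclusion $F\hookrightarrow F^\sharp$ give continuity of $\Psi$, the Hausdorff assumption makes $\{0\}$ closed, and property $(\mathcal{H})$ is exactly what identifies the zero set with the graph over $E$. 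All steps check out, including the continuity of $(a,b)\mapsto b$ into $F^\sharp$ (the topology of $F$ is finer than the induced one, by the definition of a pair) and the continuity of subtraction in the topological vector space $F^\sharp$.
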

	
	\begin{proof}
		See~\cite[Lemma~5.2]{AFR22proc}.
	\end{proof}
	
	\begin{lemma}\label{frechet_closed}
		A bounded linear map \(P:E\to F\) between Fréchet spaces has closed range if and only if the following holds:
		whenever \(\{u_\ell\}\subset E\) satisfies \(P u_\ell \to 0\), there exists a sequence \(\{v_\ell\}\subset E\) such that \(P u_\ell = P v_\ell\) for all \(\ell\in\mathbb{N}\) and \(v_\ell \to 0\).
	\end{lemma}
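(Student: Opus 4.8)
The plan is to pass to the quotient by $\ker P$ and to recognize the stated condition as the sequential continuity at the origin of the inverse of the induced injective map. Since $P$ is continuous, $N:=\ker P$ is a closed subspace of $E$, so the quotient $E/N$ is again a Fréchet space, the quotient map $\pi:E\to E/N$ is continuous and open, and $P$ factors as $P=\hat P\circ\pi$, where $\hat P:E/N\to F$ is continuous and injective and satisfies $\mathrm{ran}(\hat P)=\mathrm{ran}(P)$. Throughout I will use the standard lifting property of quotients of Fréchet spaces: a sequence $[u_\ell]$ tends to $0$ in $E/N$ if and only if one can choose representatives $v_\ell\in E$ with $\pi(v_\ell)=[u_\ell]$ and $v_\ell\to 0$ in $E$. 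Granting this, the phrase ``$Pv_\ell=Pu_\ell$ and $v_\ell\to 0$'' is exactly the assertion that the classes $[u_\ell]$ converge to $0$ in $E/N$, so the whole statement becomes one about $\hat P$ and $\pi$.

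For the forward implication, assume $\mathrm{ran}(P)$ is closed. Then $\mathrm{ran}(P)$ is itself a Fréchet space in the topology induced by $F$, and $\hat P:E/N\to\mathrm{ran}(P)$ is a continuous bijection between Fréchet spaces; by the Open Mapping Theorem its inverse $\hat P^{-1}$ is continuous. If $Pu_\ell\to 0$ in $F$, then, since $\mathrm{ran}(P)$ is closed and contains each $Pu_\ell$ together with the limit $0$, this convergence also holds in the induced topology of $\mathrm{ran}(P)$, and continuity of $\hat P^{-1}$ gives $[u_\ell]=\hat P^{-1}(Pu_\ell)\to 0$ in $E/N$. Lifting this convergence yields $v_\ell\to 0$ with $\pi(v_\ell)=[u_\ell]$, that is $Pv_\ell=Pu_\ell$, which is the desired conclusion.

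For the converse, assume the sequence condition holds; I will show that $\hat P$ is a topological embedding, whence $\mathrm{ran}(P)$ is closed. Since $E/N$ and $\mathrm{ran}(P)\subset F$ are metrizable, it suffices to prove that the linear bijection $\hat P^{-1}:\mathrm{ran}(P)\to E/N$ is sequentially continuous at $0$. Let $g_\ell\in\mathrm{ran}(P)$ with $g_\ell\to 0$, and write $g_\ell=Pu_\ell$. The hypothesis produces $v_\ell\to 0$ in $E$ with $Pv_\ell=g_\ell$, whence $\hat P^{-1}(g_\ell)=[v_\ell]=\pi(v_\ell)\to 0$ by continuity of $\pi$. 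Thus $\hat P^{-1}$ is continuous and $\hat P:E/N\to\mathrm{ran}(P)$ is a linear homeomorphism. Since a linear homeomorphism of metrizable topological vector spaces preserves completeness, $\mathrm{ran}(P)$ is complete in the uniformity induced by $F$, and a complete subspace of the metric space $F$ is closed.

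The main obstacle is the converse direction: converting the purely sequential hypothesis into genuine closedness. The hypothesis controls only sequences tending to $0$, so the decisive points are (i) the quotient lifting property, which lets one replace the given $u_\ell$ by null sequences and thereby read the condition as sequential continuity of $\hat P^{-1}$ at the origin, and (ii) the passage from this sequential continuity---legitimate because every space in sight is metrizable---to a linear homeomorphism onto the range, after which completeness of $E/N$ forces $\mathrm{ran}(P)$ to be closed. The lifting property itself is elementary but must be invoked with care: one fixes a translation-invariant metric $d$ on $E$, uses the infimum definition of the quotient metric $\bar d$, and selects $v_\ell$ with $d(v_\ell,0)$ arbitrarily close to $\bar d([u_\ell],0)$ so that $v_\ell\to 0$ whenever $[u_\ell]\to 0$.
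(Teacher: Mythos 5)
Your proof is correct. The paper itself gives no argument for this lemma---it only cites K\"othe's book---and your reduction to the quotient $E/\ker P$, the lifting of null sequences through the quotient map, and the two applications of the Open Mapping Theorem / completeness transfer (noting that a linear homeomorphism of metrizable topological vector spaces is a uniform isomorphism, so $\mathrm{ran}(P)$ is complete, hence closed) constitute exactly the standard argument behind that reference.
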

	
	\begin{proof}
		See~\cite[p.~18]{kothe_TVS}.
	\end{proof}
	
	Inspired by the proof of \cite[Theorem 3.5]{AFR2022}, we obtain the following main result of this section.
	
	\begin{theorem}\label{FS_GHGS}
		Let \(E,F\) be FS spaces represented as projective limits of nested compact sequences of Banach spaces,
		\[
		E = \projlim E_k, \qquad F = \projlim F_k.
		\]
		Let \(P:(E_0,E)\to(F^\sharp,F)\) be a map of pairs, with \(F^\sharp\) Hausdorff.
		If \(P\) satisfies property \((\mathcal{H}')\), then the range of \(P:E\to F\) is closed.
	\end{theorem}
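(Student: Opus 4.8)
The plan is to quotient out the kernel so as to reach an injective operator, use the graph-closure result to identify the topology of the quotient, and then close the range by a Peetre-type compactness argument on the $E$-side. First I would set $N:=\ker(P\colon E_0\to F^\sharp)$, which is closed because $P\colon E_0\to F^\sharp$ is continuous and $F^\sharp$ is Hausdorff, and form the Banach space $\tilde E_0:=E_0/N$ together with the Fréchet space $\tilde E:=E/(E\cap N)$. The operator descends to a map of pairs $P'\colon(\tilde E_0,\tilde E)\to(F^\sharp,F)$ whose restriction to $\tilde E_0$ is injective; moreover $P'\colon\tilde E\to F$ is injective, since $\ker(P\colon E\to F)=E\cap N$, and $\mathrm{ran}(P\colon E\to F)=\mathrm{ran}(P'\colon\tilde E\to F)$, so it suffices to show that $P'$ has closed range. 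A short check shows that property $(\mathcal H')$ for $P$ becomes property $(\mathcal H)$ for $P'$: lifting $\bar u\in\tilde E_0$ with $P'\bar u\in F$ to $u\in E_0$ and applying $(\mathcal H')$ produces $v\in E$ with $Pu=Pv$, whence $u-v\in N$ and $\bar u=\bar v\in\tilde E$.

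Next, Proposition \ref{prop_pairs} applied to $P'$ shows that the graph map $\gamma_{P'}\colon\tilde E\to\tilde E_0\times F$, $\bar u\mapsto(\bar u,P'\bar u)$, has closed range $G$. Since $\gamma_{P'}$ is a continuous bijection between the Fréchet spaces $\tilde E$ and $G$, the Open Mapping Theorem makes it a topological isomorphism, so the topology of $\tilde E$ coincides with the one induced by $\tilde E_0\times F$; that is, it is generated by $\|\cdot\|_{\tilde E_0}$ together with the seminorms $\bar u\mapsto\|P'\bar u\|_{F_j}$. In particular, using that the natural map $\tilde E\to\tilde E_1:=E_1/(E_1\cap N)$ is continuous, I would extract the a priori estimate
\[
\|\bar u\|_{\tilde E_1}\le C\big(\|\bar u\|_{\tilde E_0}+\|P'\bar u\|_{F_j}\big),\qquad\bar u\in\tilde E,
\]
for some $C>0$ and $j\in\mathbb N_0$.

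It then remains to prove that $P'$ is a topological embedding, i.e.\ that $P'\bar u_\ell\to0$ in $F$ forces $\bar u_\ell\to0$ in $\tilde E$; by the topology description above this reduces to showing $\|\bar u_\ell\|_{\tilde E_0}\to0$. This is where the compact inclusion $\tilde E_1\hookrightarrow\tilde E_0$, the quotient of the compact inclusion $E_1\hookrightarrow E_0$, enters. Assuming the contrary and replacing $\bar u_\ell$ by $\bar u_\ell/\max(1,\|\bar u_\ell\|_{\tilde E_0})$ to obtain a sequence bounded and bounded away from $0$ in $\tilde E_0$ with $P'\bar u_\ell\to0$ still, the estimate bounds this sequence in $\tilde E_1$; compactness then yields a subsequence converging in $\tilde E_0$ to some $\bar u_*\neq0$, and the full convergence $P'\bar u_\ell\to0$ in $F$ places $(\bar u_*,0)$ in the closed set $G$, forcing $\bar u_*\in\tilde E$ and $P'\bar u_*=0$, hence $\bar u_*=0$ by injectivity, a contradiction. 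Consequently $\mathrm{ran}(P')\cong\tilde E$ is complete, hence closed in $F$.

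I expect this final step to be the main obstacle. Property $(\mathcal H')$ only delivers the closed graph, not any boundedness of minimizing sequences, so the compactness argument must be set up with a normalization that transfers boundedness into $\tilde E_1$ while keeping the $\tilde E_0$-norm bounded away from $0$. Moreover, since nothing in the hypotheses relates $F^\sharp$ to the Banach spaces $F_j$, the vanishing $P'\bar u_*=0$ cannot be read off from continuity of the induced map $\tilde E_0\to F^\sharp$; it must instead be extracted from the closedness of $G$ together with the \emph{full} convergence $P'\bar u_\ell\to0$ in $F$, which is precisely why the quotient and graph-closure reductions are carried out before invoking compactness.
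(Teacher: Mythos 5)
Your proof is correct and follows essentially the same route as the paper's: quotient by the kernel to turn $(\mathcal{H}')$ into $(\mathcal{H})$, apply Proposition~\ref{prop_pairs} and the Open Mapping Theorem to obtain the a priori estimate, and run the normalization--compactness contradiction using the compact inclusion $E_1\hookrightarrow E_0$. The only (equivalent) difference is the final step: the paper upgrades the estimate to $\|u+K\|_{E_k/K}\le C'\|Pu\|_{F_j}$ and cites the K\"othe criterion (Lemma~\ref{frechet_closed}), whereas you conclude directly that the injective quotient map is a topological embedding onto a complete, hence closed, subspace of $F$ (and your identification of the limit through the closedness of the graph in $\tilde E_0\times F$, rather than through continuity into $F^\sharp$, is a careful touch).
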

	
	\begin{proof}
		Firstly, we observe that for any map of pairs \(P:(E^\sharp,E)\to(F^\sharp,F)\), the quotient \((E^\sharp/\ker P,\ E/(E\cap\ker P))\) is again a pair of topological vector spaces, and \(P\) descends to a quotient map of pairs
		\[
		P' : (E^\sharp/\ker P,\ E/(E\cap\ker P)) \longrightarrow (F^\sharp,F).
		\]
		Since \(P\) satisfies the hypotheses of Proposition~\ref{prop_pairs},
		we apply this to our setting with \(K_0=\ker P\) and \(K=K_0\cap E\),
		so that \(P'\) satisfies property \((\mathcal{H})\).

		By Proposition~\ref{prop_pairs}, the associated graph map 
		\[
		\gamma_{P'} : E/K \longrightarrow (E_0/K_0)\times F, \quad u+K \longmapsto (u+K_0,Pu),
		\]
		has closed range.

		Notice that \(Y=\mathrm{ran}(\gamma_{P'})\) is a closed subspace of the Fréchet space \((E_0/K_0)\times F\), hence \(Y\) is Fréchet. Since \(E/K\) is FS, the Open Mapping Theorem implies that \(\gamma_{P'}^{-1}:Y\to E/K\) is continuous. Consequently, for each \(k\in\mathbb{N}\) there exist \(j\in\mathbb{N}_0\) and \(C>0\) such that
		\[
		\|u+K\|_{E_k/K} \le C\big(\|u+K_0\|_{E_0/K_0} + \|Pu\|_{F_j}\big), \quad u\in E.
		\]

		We claim that there exists \(C'>0\) satisfying
		\begin{equation}\label{ineq_T}
			\|u+K\|_{E_k/K} \le C'\|Pu\|_{F_j},\quad u\in E.
		\end{equation}
		
		Assume, by contradiction, that \eqref{ineq_T} fails. Then for each \(\ell\in\mathbb{N}\) there exists \(u_\ell\in E\) such that \(\|u_\ell+K\|_{E_k/K} > \ell\,\|Pu_\ell\|_{F_j}\). By rescaling, we may assume \(\|u_\ell+K\|_{E_k/K}=1\).
		The quotient norm is given by
		\[
		\|u+K\|_{E_k/K} = 
		\inf\big\{\,\|v\|_{E_k} : v\in E_k,\ u-v\in K\,\big\}.
		\]
		Thus, for each \(\ell\) we can find \(v_\ell\in E_k\) with \(u_\ell-v_\ell\in K\) and \(\|v_\ell\|_{E_k}\le2\).
		The sequence \(\{v_\ell\}\) is bounded in \(E_k\), and the compact inclusion \(E_k\hookrightarrow E_0\) 
		implies that a subsequence \(\{v_{\ell'}\}\) converges in \(E_0\) to some \(v\in E_0\).

		Since \(P v_{\ell'}\to0\), continuity gives \(v\in K_0\). Therefore, \(u_{\ell'}+K_0=v_{\ell'}+K_0\to v+K_0 = 0+K_0\), and hence
		\[
		1 = \|u_{\ell'}+K\|_{E_k/K} \le C\big(\|u_{\ell'}+K_0\|_{E_0/K_0}+\|Pu_{\ell'}\|_{F_j}\big)\to0,
		\]
		a contradiction. We conclude that \eqref{ineq_T} holds true.  By Lemma~\ref{frechet_closed}, the range of the quotient map 
		\(u+K\mapsto Pu\) is closed, and so is the range of \(P:E\to F\).
	\end{proof}
	
	\begin{definition}
		Let \( \mathbb{E} \) and \( \mathbb{F} \) be smooth complex (or real) vector bundles over a smooth paracompact manifold \( M \). A differential operator
		\[
		P : C^\infty(M,\mathbb{E}) \longrightarrow C^\infty(M,\mathbb{F})
		\]
		is said to be \emph{almost globally hypoelliptic} if
		\[
		u \in \mathscr{D}'(M,\mathbb{E}), \ P u \in C^\infty(M,\mathbb{F}) \ \Rightarrow \ \exists\, v \in C^\infty(M,\mathbb{E}) \text{ such that } P u = P v.
		\]
	\end{definition}
	
	\begin{corollary}\label{AGH_closed}
		Let \(\mathbb{E}\) and \(\mathbb{F}\) be smooth vector bundles over a paracompact manifold \(M\).
		If a differential operator 
		\(P:C^\infty(M,\mathbb{E})\to C^\infty(M,\mathbb{F})\)
		is almost globally hypoelliptic, then \(P\) has closed range.
	\end{corollary}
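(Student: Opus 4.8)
The plan is to obtain the conclusion directly from the abstract Theorem~\ref{FS_GHGS}, by realizing $P$ as a map of pairs that satisfies property $(\mathcal{H}')$. First I would use the results of Section~\ref{sec:fsonmfs}: by Theorem~\ref{int_Hloc_M} and the Komatsu factorization recalled at the end of Subsection~\ref{FS_spaces}, which applies equally to smooth sections of vector bundles, both
\[
E := C^\infty(M,\mathbb{E}) = \projlim E_k
\qquad\text{and}\qquad
F := C^\infty(M,\mathbb{F}) = \projlim F_k
\]
are FS spaces expressed as projective limits of nested sequences of Banach spaces, with compact and injective inclusions $E_{k+1}\hookrightarrow E_k$ and $F_{k+1}\hookrightarrow F_k$. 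Since each Komatsu Banach space is squeezed between two consecutive local Sobolev spaces, the zeroth term $E_0$ embeds continuously into $\mathscr{H}_{\mathrm{loc}}^0(M,\mathbb{E})\subset\mathscr{D}'(M,\mathbb{E})$.

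Next I would choose the ambient target space to be $F^\sharp := \mathscr{D}'(M,\mathbb{F})$, which is Hausdorff, and check that $P$ is a map of pairs $P:(E_0,E)\to(F^\sharp,F)$. As a differential operator with smooth coefficients, $P$ acts continuously from $\mathscr{D}'(M,\mathbb{E})$ to $\mathscr{D}'(M,\mathbb{F})$; precomposing with the continuous inclusion $E_0\hookrightarrow\mathscr{D}'(M,\mathbb{E})$ shows that $P:E_0\to F^\sharp$ is continuous. Furthermore $P$ carries smooth sections to smooth sections, so $P(E)\subset F$, and the induced map $P:E\to F$ is continuous because differential operators act continuously on spaces of smooth sections. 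This verifies all the structural requirements of a map of pairs.

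It then remains to verify property $(\mathcal{H}')$ for the pair $(E_0,E)$, which is precisely the point at which the hypothesis is used. Let $u\in E_0$ be such that $Pu\in F=C^\infty(M,\mathbb{F})$. Because $E_0\subset\mathscr{D}'(M,\mathbb{E})$, the element $u$ is in particular a distributional section, so almost global hypoellipticity furnishes some $v\in C^\infty(M,\mathbb{E})=E$ with $Pu=Pv$; this is exactly $(\mathcal{H}')$. All the hypotheses of Theorem~\ref{FS_GHGS} are thus met, and I conclude that the range of $P:E\to F$ is closed.

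The only genuinely delicate point I anticipate is the bookkeeping in the first step: one must confirm that the abstract Banach space $E_0$ produced by the Komatsu construction really embeds into $\mathscr{D}'(M,\mathbb{E})$, so that the distributional hypothesis of almost global hypoellipticity can legitimately be applied to its elements. Once this inclusion is in hand, the argument is essentially formal—property $(\mathcal{H}')$ is a weakening of almost global hypoellipticity (its hypothesis is restricted from all of $\mathscr{D}'(M,\mathbb{E})$ to the subspace $E_0$), and the closed-range conclusion is delivered by Theorem~\ref{FS_GHGS}.
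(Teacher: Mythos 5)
Your proposal is correct and follows essentially the same route as the paper: realize $C^\infty(M,\mathbb{E})$ and $C^\infty(M,\mathbb{F})$ as projective limits of compact nested Banach sequences via the local Sobolev spaces and Komatsu's construction, take $F^\sharp=\mathscr{D}'(M,\mathbb{F})$, and apply Theorem~\ref{FS_GHGS} after noting that almost global hypoellipticity yields property $(\mathcal{H}')$ for the pair $(E_0,E)$. You are in fact slightly more explicit than the paper on the one delicate point, namely that the Komatsu space $E_0$ embeds into $\mathscr{D}'(M,\mathbb{E})$ so that the distributional hypothesis can be applied to its elements.
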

	\begin{proof}
		By the results in Sections~\ref{FS_spaces} and~\ref{Loc_Sob},
		the spaces \(C^\infty(M,\mathbb{E})\) and \(C^\infty(M,\mathbb{F})\)
		are FS spaces, each realized as a projective limit of a nested sequence
		of Banach spaces with compact inclusions,
		\(\{E_k\}_{k\in\mathbb{N}_0}\) and \(\{F_k\}_{k\in\mathbb{N}_0}\), respectively.
		In particular, we may view \(C^\infty(M,\mathbb{E})\subset E_0\) and
		\(C^\infty(M,\mathbb{F})\subset F_0\).
		
		Now, the almost global hypoellipticity of \(P\) means precisely that
		the map
		\[
		P:(E_0,\, C^\infty(M,\mathbb{E})) \longrightarrow
		(\mathscr{D}'(M,\mathbb{F}),\, C^\infty(M,\mathbb{F}))
		\]
		is a map of pairs satisfying property \((\mathcal{H}')\),
		and \(\mathscr{D}'(M,\mathbb{F})\) is Hausdorff.
		Hence, all the hypotheses of Theorem~\ref{FS_GHGS} are satisfied,
		and the closedness of the range of \(P\) follows.
	\end{proof}


\begin{remark}
	In particular, \cite[Theorem~3.5]{AFR2022} (see also \cite[Theorem~2.2]{AFJR2024})
	remains valid in the non-compact setting considered here.
	The converse, however, does not hold in general:
	see \cite[Example~3.6]{AFR2022} for a vector field on the circle
	whose range is closed but which is not almost globally hypoelliptic.
	Fortunately, the converse is true for the class of operators that will concern us,
	as shown in the subsequent Theorem \ref{gs_agh_equiv}. 
\end{remark}

\begin{theorem}\label{gs_agh_equiv}
	The operator \(	\mathbb{L}^0 : C^\infty(M\times\mathbb{T}^m) \longrightarrow \mathsf{\Lambda}^{0,1} C^\infty(M\times\mathbb{T}^m)\), defined by
	\begin{equation}\label{L_0bis}
		\mathbb{L}^0 u = \mathrm{d}_t u + \sum_{k=1}^m \omega_k \wedge \partial_{x_k}u,
	\end{equation}
	has closed range if and only if it is almost globally hypoelliptic.
\end{theorem}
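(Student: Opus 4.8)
The plan is to prove the two implications separately. The direction ``almost globally hypoelliptic $\Rightarrow$ closed range'' is immediate: $\mathbb{L}^0$ is a first-order differential operator between the spaces of smooth sections $C^\infty(M\times\mathbb{T}^m)$ and $\mathsf{\Lambda}^{0,1}C^\infty(M\times\mathbb{T}^m)$ over the paracompact manifold $M\times\mathbb{T}^m$, so Corollary~\ref{AGH_closed} applies verbatim. The substance is the converse, ``closed range $\Rightarrow$ almost globally hypoelliptic,'' which is precisely the implication that fails for a generic operator (cf.\ the preceding Remark); I would exploit the fact that $\mathbb{L}^0$ is elliptic along each partial Fourier mode.

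For the converse, assume $\mathrm{ran}(\mathbb{L}^0)$ is closed and take $u\in\mathscr{D}'(M\times\mathbb{T}^m)$ with $f:=\mathbb{L}^0 u\in\mathsf{\Lambda}^{0,1}C^\infty(M\times\mathbb{T}^m)$; I must produce $v\in C^\infty(M\times\mathbb{T}^m)$ with $\mathbb{L}^0 v=f$, that is, show $f\in\mathrm{ran}(\mathbb{L}^0)$. First I would pass to partial Fourier coefficients. Since $\widehat{(\partial_{x_k}u)}_\xi=i\xi_k\widehat{u}_\xi$ and $\widehat{(\mathrm{d}_t u)}_\xi=\mathrm{d}_t\widehat{u}_\xi$, the equation reads, mode by mode, $L_\xi\widehat{u}_\xi=\widehat{f}_\xi$ with $L_\xi=\mathrm{d}_t+i\,\xi\!\cdot\!\boldsymbol{\omega}$ acting on functions on $M$, where $\widehat{f}_\xi\in\mathsf{\Lambda}^1 C^\infty(M)$ by Proposition~\ref{cont_fourier}. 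The key point is that $L_\xi$ is (overdetermined) elliptic: its principal symbol $\phi\mapsto i\zeta\,\phi$ is injective for $\zeta\neq0$. Concretely, as $\xi\!\cdot\!\boldsymbol{\omega}$ is closed it admits local primitives $\Theta_\xi$, and $L_\xi$ is locally conjugate to $\mathrm{d}_t$ via multiplication by $e^{i\Theta_\xi}$; since $\mathrm{d}_t\psi$ smooth forces all first derivatives of $\psi$, hence $\psi$, to be smooth, it follows that $\widehat{u}_\xi\in C^\infty(M)$ for every $\xi\in\mathbb{Z}^m$.

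The heart of the matter is to verify the orthogonality condition~\eqref{solv_CC}, i.e.\ $\langle w,f\rangle=0$ for every $w\in\mathsf{\Lambda}^{0,n-1}\mathcal{E}'(M\times\mathbb{T}^m)\cap\ker\mathbb{L}^{n-1}$. Expanding $w$ and $f$ in partial Fourier series reduces this pairing to an absolutely convergent sum of mode-wise pairings of the form $\langle\widehat{w}_\xi,\widehat{f}_{-\xi}\rangle$; convergence follows from the polynomial growth of the $\widehat{w}_\xi$ (as $w$ is a compactly supported current) against the rapid decay of the $\widehat{f}_{-\xi}$. For each term I would integrate by parts: because $\widehat{f}_{-\xi}=L_{-\xi}\widehat{u}_{-\xi}$ with $\widehat{u}_{-\xi}$ now known to be smooth, the identity
\[
\langle\widehat{w}_\xi,\,L_{-\xi}\widehat{u}_{-\xi}\rangle=\langle {}^tL_{-\xi}\,\widehat{w}_\xi,\,\widehat{u}_{-\xi}\rangle
\]
is legitimate, one factor in each pairing being smooth and $\widehat{w}_\xi$ being compactly supported. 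The transpose ${}^tL_{-\xi}$ is exactly the mode-$\xi$ component of $\pm\mathbb{L}^{n-1}$, which annihilates $\widehat{w}_\xi$ since $w\in\ker\mathbb{L}^{n-1}$; hence every term vanishes and $\langle w,f\rangle=0$.

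Having verified~\eqref{solv_CC}, I would finally invoke closed range: by Lemma~\ref{lema_gabriel}(5) applied to $\mathbb{L}^0$ between the FS spaces $C^\infty(M\times\mathbb{T}^m)$ and $\mathsf{\Lambda}^{0,1}C^\infty(M\times\mathbb{T}^m)$, closedness of $\mathrm{ran}(\mathbb{L}^0)$ gives $\mathrm{ran}(\mathbb{L}^0)=\ker({}^t\mathbb{L}^0)^\perp$. Since $f$ lies in this pre-annihilator, there exists $v\in C^\infty(M\times\mathbb{T}^m)$ with $\mathbb{L}^0 v=f=\mathbb{L}^0 u$, which is exactly almost global hypoellipticity. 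The main obstacle, and the only place where the special form of $\mathbb{L}^0$ is essential, is the integration by parts above: for a generic operator the pairing $\langle w,\mathbb{L}^0 u\rangle$ cannot be transferred onto $u$ because neither $w$ nor $u$ is smooth — this is the mechanism behind the counterexample in the Remark. What rescues $\mathbb{L}^0$ is that its $t$-part is the full exterior derivative $\mathrm{d}_t$, which makes each Fourier mode elliptic and forces each $\widehat{u}_\xi$ to be smooth, so the transfer is valid mode by mode even though $u$ itself need not be smooth.
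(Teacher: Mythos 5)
Your proof is correct, and the forward implication coincides with the paper's (both just invoke Corollary~\ref{AGH_closed}). For the converse the two arguments start identically --- pass to partial Fourier coefficients and establish that each $\widehat{u}_\xi$ is smooth; the paper cites \cite[Corollary~2.4]{CKMT} for this, while your local conjugation of $L_\xi$ to $\mathrm{d}_t$ by $e^{i\Theta_\xi}$ together with the overdetermined ellipticity of the gradient is a self-contained proof of the same fact. The final step, however, is genuinely different. The paper writes
\[
f=\lim_{\ell\to\infty}\mathbb{L}^0\Big(\sum_{|\xi|\le\ell}e^{i\xi\cdot x}\widehat{u}_\xi\Big),
\]
using that the truncated sums of $u$ are smooth and that the truncated sums of $f$ converge to $f$ in the $C^\infty$ topology; hence $f$ lies in the closure of the range, which equals the range. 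You instead verify the compatibility condition~\eqref{solv_CC} by a mode-wise integration by parts against elements of $\ker({}^t\mathbb{L}^0)$ and then invoke Lemma~\ref{lema_gabriel}(5). Both routes are valid, but the paper's approximation argument is shorter and sidesteps the duality bookkeeping your route requires: the absolute convergence of $\sum_\xi\langle\widehat{w}_\xi,\widehat{f}_{-\xi}\rangle$ (polynomial growth of the coefficients of a compactly supported finite-order current against rapid decay of those of $f$) and the sign conventions in ${}^tL_{-\xi}$ versus the mode-$\xi$ component of $\mathbb{L}^{n-1}$. These verifications are routine and you flag them, so there is no gap; your route has the minor virtue of making explicit exactly where the orthogonality to $\ker({}^t\mathbb{L}^0)$ enters, at the cost of an extra layer of functional-analytic machinery that the direct approximation avoids.
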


\begin{proof}
	If \( \mathbb{L}^0 \) is almost globally hypoelliptic,
	then it has closed range, by Corollary~\ref{AGH_closed}.
	
	Conversely, suppose that \( \mathbb{L}^0 \) has closed range,
	and let \( u \in \mathscr{D}'(M\times\mathbb{T}^m) \) be a distribution that satisfies
	\[
	\mathbb{L}^0 u = f \in \mathsf{\Lambda}^{0,1}C^\infty(M\times\mathbb{T}^m).
	\]
	Taking the partial Fourier expansion in the torus variables, we obtain, for each
	\( \xi \in \mathbb{Z}^m \),
	\[
	\widehat{f}_\xi(t) 	= \mathbb{L}_\xi^0 \widehat{u}_\xi(t) = \mathrm{d}_t \widehat{u}_\xi(t)
	+ i(\xi\!\cdot\!\boldsymbol{\omega})\,\widehat{u}_\xi(t).
	\]
	
	By \cite[Corollary~2.4]{CKMT}, each Fourier coefficient \( \widehat{u}_\xi \) belongs to \( C^\infty(M) \).
	Hence, we can reconstruct \( f \) as
	\[
	f = \lim_{\ell\to\infty} 	\sum_{|\xi|\le\ell} e^{i\xi\cdot x}\, 	\mathbb{L}_\xi^0 \widehat{u}_\xi(t)
	= \lim_{\ell\to\infty} 	\mathbb{L}^0\!\left( \sum_{|\xi|\le\ell} e^{i\xi\cdot x}\widehat{u}_\xi(t) \right),
	\]
	with convergence in \( \mathsf{\Lambda}^{0,1}C^\infty(M\times\mathbb{T}^m) \).
	
	Therefore, \( f \) lies in the closure of \( \mathrm{ran}(\mathbb{L}^0) \). Since the range of \( \mathbb{L}^0 \) is closed, there exists \( v \in C^\infty(M\times\mathbb{T}^m) \) such that \( \mathbb{L}^0 v = f \). This proves that \( \mathbb{L}^0 \) is almost globally hypoelliptic.
\end{proof}

	\section{Global solvability}\label{sec:globsol}
 
	 In this section, we establish necessary and sufficient conditions for the closedness of the range of
	 \[
	 \mathbb{L}^0 : C^\infty(M\times\mathbb{T}^m) \longrightarrow  \mathsf{\Lambda}^{0,1} C^\infty(M\times\mathbb{T}^m),
	 \]
	 where $M$ is assumed to be the interior of a scattering manifold.
	 
	 We begin by recalling that a smooth form  \( f \in \mathsf{\Lambda}^{0,q}C^\infty(M\times\mathbb{T}^m) \) acts on a compactly supported smooth form \( \varphi \in \mathsf{\Lambda}^{0,n-q}C_0^\infty(M\times\mathbb{T}^m) \) by
	 \[
	 \langle f,\varphi\rangle = \int_{M\times\mathbb{T}^m} f\wedge\varphi\wedge \mathrm{d}x,
	 \]
	 where  \(\mathrm{d}x = \mathrm{d}x_1\wedge\cdots\wedge\mathrm{d}x_m.\)
	 
	 Thus, we can identify  \( \mathsf{\Lambda}^{0,q}\mathcal{E}'(M\times\mathbb{T}^m)\) with the topological dual of  \( \mathsf{\Lambda}^{0,n-q}C^\infty(M\times\mathbb{T}^m),\) so that each
	 \(	 \mathsf{\Lambda}^{0,q}\mathcal{E}'(M\times\mathbb{T}^m)\)  is a DFS space.
	 
	 As shown in~\cite{ADL2023gs}, Stokes’ theorem implies that for forms \( f \in \mathsf{\Lambda}^{0,q} C^\infty(M\times\mathbb{T}^m) \)
	 and \( \varphi \in \mathsf{\Lambda}^{0,n-q-1}C^\infty(M\times\mathbb{T}^m) \), one of which has compact support, we have
	 \[
	 \int_{M\times\mathbb{T}^m} (\mathbb{L}^q f)\wedge\varphi\wedge \mathrm{d}x = (-1)^{q+1}
	 \int_{M\times\mathbb{T}^m}  f\wedge(\mathbb{L}^{n-q-1}\varphi)\wedge \mathrm{d}x.
	 \]
	 Consequently,
	 \[
	 \mathbb{L}^q :
	 \mathsf{\Lambda}^{0,q}\mathcal{E}'(M\times\mathbb{T}^m)
	 \longrightarrow
	 \mathsf{\Lambda}^{0,q+1}\mathcal{E}'(M\times\mathbb{T}^m)
	 \]
	 is the transpose of
	 \[
	 (-1)^{q+1}\mathbb{L}^{n-q-1} :
	 \mathsf{\Lambda}^{0,n-q-1}C^\infty(M\times\mathbb{T}^m)
	 \longrightarrow
	 \mathsf{\Lambda}^{0,n-q}C^\infty(M\times\mathbb{T}^m),
	 \]
	 and conversely.
	 
	 In this section we establish a complete characterization of the global solvability of the operator~$\mathbb{L}^0$ and of its adjoint $\mathbb{L}^{n-1}$. 
	 
	 \begin{theorem}\label{thm_solv}
	 	Let $\omega_1,\dots,\omega_m\in \mathsf{\Lambda}^1 C_{\partial M}^\infty(M)$ be a family of real-valued closed $1$-forms on $M$. 
		The following statements are equivalent:
	 	\begin{enumerate}
	 		\item $\mathbb{L}^0$ is globally $(C^\infty,C^\infty)$-solvable;
	 		\item $\mathbb{L}^{n-1}$ is globally $(\mathcal{E}',\mathcal{E}')$-solvable;
	 		\item $\mathbb{L}^{n-1}$ is globally $(\mathcal{E}',C_c^\infty)$-solvable;
	 		\item the family $\boldsymbol{\omega} = (\omega_1,\dots,\omega_m)$ is not $\Gamma_1$-Liouville;
	 		\item the matrix of periods $A(\boldsymbol{\omega})$ satisfies the Diophantine condition~\eqref{DCg} with $\Gamma = \Gamma_1$.
	 	\end{enumerate}
	 \end{theorem}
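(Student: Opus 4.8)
The plan is to close the cycle of implications $(1)\Leftrightarrow(2)\Rightarrow(3)\Rightarrow(5)\Leftrightarrow(4)\Rightarrow(1)$, leaning on the FS/DFS structure established in Section~\ref{sec:fsonmfs} and the hypoellipticity results recalled in Section~\ref{sec:glregcycles}. The equivalence $(1)\Leftrightarrow(2)$ is the cleanest step: since the spaces $\mathsf{\Lambda}^{0,q}C^\infty(M\times\mathbb{T}^m)$ are FS and their duals are DFS, $(C^\infty,C^\infty)$-solvability of $\mathbb{L}^0$ is exactly closedness of $\mathrm{ran}(\mathbb{L}^0)$, and $(\mathcal{E}',\mathcal{E}')$-solvability of $\mathbb{L}^{n-1}$ is exactly closedness of its range. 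By \eqref{Lq_transp} the transpose of $\mathbb{L}^0$ equals $-\mathbb{L}^{n-1}$, so Lemma~\ref{lema_gabriel} (equivalence of $\mathrm{ran}(L)$ closed and $\mathrm{ran}({}^tL)$ closed) yields $(1)\Leftrightarrow(2)$. The implication $(2)\Rightarrow(3)$ is immediate by restriction, since any $f\in\mathsf{\Lambda}^{0,q+1}C_c^\infty(M\times\mathbb{T}^m)\subset\mathsf{\Lambda}^{0,q+1}\mathcal{E}'(M\times\mathbb{T}^m)$ satisfying the orthogonality relation~\eqref{solv_EE} is, a fortiori, solvable in $\mathcal{E}'$ once $(2)$ holds.

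For $(3)\Rightarrow(5)$ I would argue by contraposition, deriving a Hörmander-type a priori estimate from solvability and showing that it is incompatible with the failure of the Diophantine condition~\eqref{DCg}. Concretely, $(\mathcal{E}',C_c^\infty)$-solvability combined with a closed-graph argument on the relevant DFS spaces produces a quantitative estimate bounding suitable test forms $\phi$ in terms of $\mathbb{L}^0\phi$, uniformly on a fixed compact set. Inserting the oscillatory forms $\phi(t,x)=\psi(t)\,e^{i\xi\cdot x}$ with $\xi\in\Gamma_1$, the transpose reduces on each frequency to the operator $\mathrm{d}_t+i(\xi\cdot\boldsymbol{\omega})\wedge\cdot$, and the estimate becomes a polynomial-in-$|\xi|$ bound relating $\psi$ to $(\mathrm{d}_t+i(\xi\cdot\boldsymbol{\omega}))\psi$. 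A violation of~\eqref{DCg} yields a sequence $\xi^{(j)}$ along which $A(\boldsymbol{\omega})\xi^{(j)}$ is super-polynomially close to $\mathbb{Z}^d$; via the cycles $\sigma_\ell$ and the exponential obstruction of Lemma~\ref{lemma_dioph} one then builds $\psi_j$ whose smallest admissible norm grows faster than any power of $|\xi^{(j)}|$, contradicting the estimate. Hence solvability forces~\eqref{DCg}.

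The equivalence $(4)\Leftrightarrow(5)$ is the $\Gamma_1$-restricted form of Proposition~\ref{prop_rat_Liouville_matrix}: being $\Gamma_1$-Liouville is precisely the failure of~\eqref{DCg} for the period matrix $A(\boldsymbol{\omega})$ along $\Gamma_1$, so not $\Gamma_1$-Liouville is equivalent to~\eqref{DCg}. Finally, for $(5)\Rightarrow(1)$ I would use the partial Fourier decomposition to write $\mathbb{L}^0$ as a topological direct sum $\mathbb{L}^0_{\Gamma_0}\oplus\mathbb{L}^0_{\Gamma_1}$ over the disjoint frequency subsets. The component $\mathbb{L}^0_{\Gamma_0}$ is topologically conjugate to the partial exterior derivative $\mathrm{d}_t$, whose range is closed by the de~Rham theorem together with the functorial behaviour of exact sequences of nuclear Fréchet spaces. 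For $\mathbb{L}^0_{\Gamma_1}$, the Diophantine condition~\eqref{DCg} excludes $\Gamma_1$-Liouville behaviour, and since by construction $\Gamma_1$ contains no frequency $\xi$ for which $\xi\cdot\boldsymbol{\omega}$ is integral, the characterization of Theorem~\ref{thm_global_hypo} (in its $\Gamma_1$-restricted form) shows $\mathbb{L}^0_{\Gamma_1}$ is globally hypoelliptic, hence almost globally hypoelliptic; Corollary~\ref{AGH_closed} then gives that its range is closed. Closedness of the range of each summand yields closedness for $\mathbb{L}^0$, i.e.\ statement $(1)$ via Theorem~\ref{gs_agh_equiv}.

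The main obstacle is the quantitative step $(3)\Rightarrow(5)$: one must convert the qualitative solvability hypothesis into a uniform a priori bound and then design the obstructing sequence of \emph{compactly supported} forms on the non-compact $M$ so that the period integrals over the cycles $\sigma_\ell$ reproduce exactly the super-polynomial approximation of $A(\boldsymbol{\omega})\xi^{(j)}$ to $\mathbb{Z}^d$. Managing the non-compactness — keeping the test data compactly supported while carrying the prescribed cohomological periods, and ensuring that the scattering-metric Hodge theory underlying the period matrix remains valid — is the delicate part; the remaining implications are essentially formal consequences of duality, the Fourier decomposition, and the hypoellipticity theorems already at our disposal.
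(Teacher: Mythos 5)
Your proposal follows essentially the same route as the paper: duality via Lemma~\ref{lema_gabriel} for $(1)\Leftrightarrow(2)\Rightarrow(3)$, a H\"ormander-type a priori estimate contradicted by oscillatory compactly supported test forms for $(3)\Rightarrow(4)/(5)$ (the paper phrases this as $(3)\Rightarrow(4)$ via Theorem~\ref{ECc_horm}, but this is the same step given $(4)\Leftrightarrow(5)$ from Proposition~\ref{liou_DC}), and the $\Gamma_0\oplus\Gamma_1$ Fourier decomposition --- conjugation of $\mathbb{L}^0_{\Gamma_0}$ to $\mathrm{d}_t$ plus $\Gamma_1$-global hypoellipticity and Corollary~\ref{AGH_closed} for $\mathbb{L}^0_{\Gamma_1}$ --- for the closing implication. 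The argument is correct and matches Propositions~\ref{horm_est}, \ref{cl_gamma0} and~\ref{cl_gamma1} of the paper.
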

	 
	 The notions of $\Gamma$-Liouville $1$-forms and of the Diophantine condition~\eqref{DCg} are milder variants of the classical definitions of Liouville $1$-forms and of condition~\eqref{DC}. They will be introduced below in Definitions~\ref{def_gLiou} and~\ref{def_DCg}. The proof of Theorem~\ref{thm_solv} is developed through the results that follow.
	 
	 As is standard, the global $(\mathcal{E}',C_c^\infty)$-solvability of $\mathbb{L}^{n-1}$ yields a Hörmander-type estimate.
	 
	 \begin{proposition}\label{horm_est}
	 	Suppose that $\mathbb{L}^{n-1}$ is globally $(\mathcal{E}',C_c^\infty)$-solvable. Then, for every compact set $K\subset M$, there exist $C>0$ and seminorms $\|\cdot\|_k,\|\cdot\|_\ell$ (depending on $K$) such that
	 	\[
	 	\left| 	\int_{M\times\mathbb{T}^m} \phi(t,x)\, f(t,x)\wedge \mathrm{d}x \right| \leq C\,\|f\|_k\,\|\mathbb{L}^0\phi\|_\ell,
	 	\]
	 	for all $f\in \mathsf{\Lambda}^{0,n}C_c^\infty(K\times\mathbb{T}^m)$ satisfying the compatibility condition~\eqref{solv_EE} and every $\phi\in C^\infty(M\times\mathbb{T}^m)$.
	 \end{proposition}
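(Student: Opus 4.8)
The plan is to recast the asserted inequality as a uniform-boundedness statement over a single Fréchet space and to extract the uniformity from a Baire category argument; the individual, non-uniform bound is an immediate consequence of solvability together with the transpose identity recorded at the beginning of this section. Fix a compact set $K\subset M$ and let
\[
X_0 = \bigl\{\, f\in\mathsf{\Lambda}^{0,n}C_c^\infty(K\times\mathbb{T}^m) : f \text{ satisfies } \eqref{solv_EE} \,\bigr\},
\qquad
\langle f,\phi\rangle := \int_{M\times\mathbb{T}^m}\phi\, f\wedge\mathrm{d}x .
\]
Since $K\times\mathbb{T}^m$ is compact, the space of smooth forms with support in this fixed compact set is Fréchet, with seminorms $\|\cdot\|_k$ given by the suprema of the coefficient derivatives up to order $k$; the compatibility condition \eqref{solv_EE} is a family of closed linear constraints, so $X_0$ is a closed subspace and hence itself Fréchet, in particular a Baire space.

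Next I would produce, for each fixed $f\in X_0$, a bound by a single seminorm of $\mathbb{L}^0\phi$. By the hypothesis that $\mathbb{L}^{n-1}$ is globally $(\mathcal{E}',C_c^\infty)$-solvable, there exists $u\in\mathsf{\Lambda}^{0,n-1}\mathcal{E}'(M\times\mathbb{T}^m)$ with $\mathbb{L}^{n-1}u=f$. The transpose relation between $\mathbb{L}^{n-1}$ and $\mathbb{L}^0$, applied with the test form being the function $\phi$, gives
\[
\langle f,\phi\rangle
= \langle \mathbb{L}^{n-1}u,\phi\rangle
= (-1)^{n}\,\langle u,\mathbb{L}^0\phi\rangle .
\]
Since $u$ is a compactly supported current, the structure theorem for such currents bounds the pairing $\langle u,\cdot\rangle$ on $\mathsf{\Lambda}^{0,1}C^\infty(M\times\mathbb{T}^m)$ by a single seminorm: there are $C_f>0$, $\ell_f\in\mathbb{N}_0$, and a compact $K'_f\supset\mathrm{supp}\,u$ with $|\langle u,g\rangle|\le C_f\|g\|_{\ell_f,K'_f}$ for all $g$. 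Taking $g=\mathbb{L}^0\phi$ yields the per-$f$ estimate $|\langle f,\phi\rangle|\le C_f\,\|\mathbb{L}^0\phi\|_{\ell_f,K'_f}$, valid for every $\phi\in C^\infty(M\times\mathbb{T}^m)$.

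The uniformity over $f$, which is the crux of the statement, I would obtain by Baire category. Fixing a compact exhaustion $K'_1\subset K'_2\subset\cdots$ of $M$, for $C,i\in\mathbb{N}$ and $\ell\in\mathbb{N}_0$ set
\[
A_{C,\ell,i}=\bigl\{\, f\in X_0 : |\langle f,\phi\rangle|\le C\,\|\mathbb{L}^0\phi\|_{\ell,K'_i}\ \ \forall\,\phi\in C^\infty(M\times\mathbb{T}^m) \,\bigr\}.
\]
Each $A_{C,\ell,i}$ is closed, being an intersection over $\phi$ of the closed conditions $\{f:|\langle f,\phi\rangle|\le C\|\mathbb{L}^0\phi\|_{\ell,K'_i}\}$ (here $f\mapsto\langle f,\phi\rangle$ is continuous and the right-hand side is a constant in $f$), and it is absolutely convex. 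By the previous paragraph every $f\in X_0$ lies in some $A_{C,\ell,i}$ (choose $i$ with $K'_i\supset K'_f$, then $\ell\ge\ell_f$ and $C\ge C_f$, using monotonicity of the seminorms), so $X_0=\bigcup_{C,\ell,i}A_{C,\ell,i}$ is a countable union of closed absolutely convex sets. Since $X_0$ is Baire, some $A_{C,\ell,i}$ has nonempty interior; being absolutely convex, it is then a neighbourhood of $0$ and contains a basic ball $\{f:\|f\|_k\le\delta\}$ for suitable $k,\delta$. A homogeneity (rescaling) argument upgrades this to
\[
\bigl|\langle f,\phi\rangle\bigr|\le \frac{C}{\delta}\,\|f\|_k\,\|\mathbb{L}^0\phi\|_{\ell,K'_i},
\qquad f\in X_0,\ \phi\in C^\infty(M\times\mathbb{T}^m),
\]
which is the desired estimate with $\|\cdot\|_\ell:=\|\cdot\|_{\ell,K'_i}$ and constant $C/\delta$, all depending only on $K$.

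The main obstacle is precisely this passage from the pointwise-in-$f$ bound to a uniform one: the solution $u$, and hence the seminorm controlling $\langle u,\mathbb{L}^0\phi\rangle$, are not chosen canonically and vary with $f$, so no direct continuity argument applies (note that joint continuity of the bilinear form $(f,\phi)\mapsto\langle f,\phi\rangle$ on $X_0\times C^\infty$ would only furnish a generic seminorm $q(\phi)$, not a seminorm of $\mathbb{L}^0\phi$). The Baire–Banach–Steinhaus mechanism circumvents this by organizing the $f$-dependent bounds into a countable family of closed absolutely convex sets and invoking completeness of $X_0$ to force one of them to be a neighbourhood of the origin. The remaining points—the Fréchet property of $X_0$, closedness and absolute convexity of the $A_{C,\ell,i}$, and the final rescaling—are routine.
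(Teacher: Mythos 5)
Your argument is correct, and it reaches the estimate by a different mechanism than the paper. Both proofs share the first step: for each fixed $f$ satisfying \eqref{solv_EE}, solvability produces $u\in\mathsf{\Lambda}^{0,n-1}\mathcal{E}'(M\times\mathbb{T}^m)$ with $\mathbb{L}^{n-1}u=f$, and the transpose identity turns $\langle f,\phi\rangle$ into $\pm\langle u,\mathbb{L}^0\phi\rangle$, bounded by a single seminorm of $\mathbb{L}^0\phi$. Where you diverge is in extracting uniformity over $f$. The paper introduces the quotient $\mathbf{F}=C^\infty(M\times\mathbb{T}^m)/\ker\mathbb{L}^0$ equipped with the seminorms $\|\phi+\mathbf{F}_0\|_\ell:=\|\mathbb{L}^0\phi\|_\ell$, checks that $B(f,\phi+\mathbf{F}_0)=\int\phi\,f\wedge\mathrm{d}x$ is a well-defined, separately continuous bilinear form on the product of two Fr\'echet spaces, and invokes the theorem that separate continuity implies joint continuity there; the product-of-seminorms estimate drops out, with the quotient seminorms guaranteeing that the $\phi$-factor is a seminorm of $\mathbb{L}^0\phi$ rather than of $\phi$. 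You instead run Baire category directly on the Fr\'echet space $X_0$ of admissible right-hand sides, encoding the desired bound into the countable family of closed, absolutely convex sets $A_{C,\ell,i}$ and rescaling; you correctly identify that a naive joint-continuity statement on $X_0\times C^\infty$ would yield the wrong kind of seminorm in $\phi$, and your device of hard-wiring $\|\mathbb{L}^0\phi\|_{\ell,K'_i}$ into the sets plays exactly the role that the quotient seminorms play in the paper. Since the separate-implies-joint theorem is itself proved by Baire/equicontinuity, the two arguments rest on the same foundation; yours is more self-contained (no quotient construction, no citation to Tr\`eves), at the cost of verifying closedness, absolute convexity, and the countable covering by hand. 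The only point worth tightening is the monotonicity of the seminorms $\|\cdot\|_{\ell,K'_i}$ along the compact exhaustion, which depends on the choice of finite subcovers in \eqref{semi_lK}; choosing the index sets $J_{K'_i}$ increasingly (or absorbing a constant) settles it.
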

	 
	 \begin{proof}
	 	The argument follows the lines of \cite[Lemma~6.1.2]{Hormander} and \cite[Lemma~VIII.1.1]{Treves}.
	 	
	 	Let $\mathbf{E}$ denote the subspace of $\mathsf{\Lambda}^{0,n}C_c^\infty(K\times\mathbb{T}^m)$  	consisting of all $f$ satisfying the compatibility condition~\eqref{solv_EE}. Since $\mathbf{E}$ is a closed subspace of $\mathsf{\Lambda}^{0,n}C_c^\infty(K\times\mathbb{T}^m)$, it is a Fréchet space with the induced topology.
	 	
	 	Next, set
	 	\[
	 	\mathbf{F}_0 = C^\infty(M\times\mathbb{T}^m)\cap\ker\mathbb{L}^0, \qquad
	 	\mathbf{F} = C^\infty(M\times\mathbb{T}^m)/\mathbf{F}_0,
	 	\]
	 	and equip $\mathbf{F}$ with the family of seminorms $\|\phi+\mathbf{F}_0\|_\ell := \|\mathbb{L}^0\phi\|_\ell$,
	 	where $\|\cdot\|_\ell$ are the standard seminorms on $C^\infty(M\times\mathbb{T}^m)$. This topology is well defined and makes $\mathbf{F}$ a Fréchet space.
	 	
	 	Define a bilinear form $B:\mathbf{E}\times\mathbf{F}\to\mathbb{R}$ by
	 	\[
	 	B(f,\phi+\mathbf{F}_0) = \int_{M\times\mathbb{T}^m} \phi(t,x)\, f(t,x)\wedge \mathrm{d}x.
	 	\]
	 	The map $B$ is well defined. Indeed, 
	 	if $\phi_1-\phi_2\in\ker\mathbb{L}^0$, then $\langle f,\phi_1-\phi_2\rangle = 0$ by~\eqref{solv_EE},
	 	and hence $B(f,\phi_1+\mathbf{F}_0)=B(f,\phi_2+\mathbf{F}_0)$.
	 	
	 	By \cite[Theorem~34.1]{Treves_TVS}, a bilinear map between locally convex spaces
	 	is continuous whenever it is separately continuous.
	 	Thus, it suffices to check that $B$ is continuous in each argument.
	 	For fixed $\phi\in C^\infty(M\times\mathbb{T}^m)$,
	 	the map $f\mapsto B(f,\phi+\mathbf{F}_0)$ is clearly continuous on~$\mathbf{E}$.	 	
	 
	 	Now fix $f\in\mathbf{E}$. Since $\mathbb{L}^{n-1}$ is globally $(\mathcal{E}',C_c^\infty)$-solvable,
	 	there exists $u\in\mathsf{\Lambda}^{0,n-1}\mathcal{E}'(M\times\mathbb{T}^m)$ such that $\mathbb{L}^{n-1}u=f$. 	Then, for all $\phi\in C^\infty(M\times\mathbb{T}^m)$,
	 	\[
	 	|B(f,\phi+\mathbf{F}_0)| = \Big|\int_{M\times\mathbb{T}^m} 	 	\phi(t,x)\, (\mathbb{L}^{n-1}u) \wedge\mathrm{d}x\Big| = |\langle u,\mathbb{L}^0\phi\rangle| \le C\,\|\mathbb{L}^0\phi\|_\ell,
	 	\]
	 	for some constants $C>0$ and $\ell\in\mathbb{N}$ independent of $\phi$, since $u$ defines a continuous linear functional on $\mathsf{\Lambda}^{0,1}C^\infty(M\times\mathbb{T}^m)$. Thus $B$ is separately continuous, and the desired estimate follows.
	 \end{proof}

	 \begin{lemma}\label{kernel_L0}
	 	If the family $\boldsymbol{\omega}$ is not rational, then $\ker\mathbb{L}^0$ consists only of constant functions.
	 \end{lemma}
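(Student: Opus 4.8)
The plan is to diagonalize $\mathbb{L}^0$ via the partial Fourier expansion in the torus variables and reduce the kernel equation to a family of first-order equations on $M$, from which the integrality of some $\xi\cdot\boldsymbol{\omega}$ can be extracted whenever a nontrivial Fourier mode exists. Writing $u=\sum_{\xi\in\mathbb{Z}^m}\widehat{u}_\xi(t)\,e^{i\xi\cdot x}$ with each $\widehat{u}_\xi\in C^\infty(M)$, the computation already recorded in the proof of Theorem~\ref{gs_agh_equiv} gives $\mathbb{L}^0u=\sum_\xi e^{i\xi\cdot x}\,\mathbb{L}^0_\xi\widehat{u}_\xi$ with $\mathbb{L}^0_\xi\widehat{u}_\xi=\mathrm{d}_t\widehat{u}_\xi+i(\xi\cdot\boldsymbol{\omega})\widehat{u}_\xi$. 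By uniqueness of Fourier coefficients, $\mathbb{L}^0u=0$ is equivalent to the countable system
\[
\mathrm{d}_t\widehat{u}_\xi+i(\xi\cdot\boldsymbol{\omega})\,\widehat{u}_\xi=0,\qquad\xi\in\mathbb{Z}^m.
\]
For $\xi=0$ this reads $\mathrm{d}_t\widehat{u}_0=0$, so $\widehat{u}_0$ is locally constant and hence constant, since $M$ is connected. It then remains to show $\widehat{u}_\xi\equiv0$ for every $\xi\neq0$.

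For fixed $\xi\neq0$ I would set $v=\widehat{u}_\xi$ and $\alpha=\xi\cdot\boldsymbol{\omega}$, noting that $\alpha$ is a \emph{real-valued} closed $1$-form because the $\omega_k$ are real and $\xi\in\mathbb{Z}^m$. Pulling the equation $\mathrm{d}_t v=-i\alpha v$ back along an arbitrary smooth curve $\gamma:[0,1]\to M$, the function $w(s)=v(\gamma(s))$ solves the scalar linear ODE $w'(s)=-i\,\alpha(\gamma'(s))\,w(s)$, whence
\[
w(s)=w(0)\,\exp\!\left(-i\int_0^s\alpha(\gamma'(\tau))\,\mathrm{d}\tau\right).
\]
Since $\alpha$ is real, the exponential factor has modulus $1$, so $|v|$ is constant along every path and therefore constant on the connected manifold $M$. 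Consequently $v$ is either identically zero or nowhere vanishing.

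The decisive step is ruling out the nowhere-vanishing case. Applying the path formula to an arbitrary smooth loop $\sigma$ (so that $w(1)=w(0)\neq0$) forces $\exp\!\big(-i\int_\sigma\alpha\big)=1$, that is $\tfrac{1}{2\pi}\int_\sigma(\xi\cdot\boldsymbol{\omega})\in\mathbb{Z}$ for every smooth $1$-cycle $\sigma$; by definition $\xi\cdot\boldsymbol{\omega}$ is then an integral $1$-form, and as $\xi\neq0$ the family $\boldsymbol{\omega}$ would be rational, contradicting the hypothesis. Hence $\widehat{u}_\xi\equiv0$ for all $\xi\neq0$ and $u=\widehat{u}_0$ is constant. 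The main obstacle is precisely this passage from the local ODE analysis to a global number-theoretic statement: one must be careful that the exponential holonomy formula is valid around every smooth closed curve and invoke connectedness to upgrade ``$|v|$ locally constant'' to ``$|v|$ constant'', which is what allows the single alternative (vanishing versus nowhere vanishing) and thereby the clean identification with the integrality condition. The argument applies verbatim to distributional $u$ as well, since $\mathbb{L}^0u=0\in C^\infty$ forces each $\widehat{u}_\xi$ to be smooth.
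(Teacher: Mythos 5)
Your proof is correct, and it follows the same overall strategy as the paper: expand in partial Fourier series, dispose of the zero mode by connectedness, and show that a nontrivial solution of $\mathrm{d}_t v + i(\xi\cdot\boldsymbol{\omega})v=0$ for $\xi\neq 0$ would force $\xi\cdot\boldsymbol{\omega}$ to be integral, contradicting non-rationality. The only difference is in how that key step is implemented: the paper passes to the universal covering $\Pi:\hat M\to M$, writes $\Pi^*v=c\,e^{-i\chi}$ with $\mathrm{d}\chi=\Pi^*(\xi\cdot\boldsymbol{\omega})$, and invokes Proposition~\ref{uni_cov} to translate $c\neq0$ into integrality; you instead integrate the scalar ODE along paths, use the reality of $\xi\cdot\boldsymbol{\omega}$ to get $|v|$ constant on the connected manifold (hence the dichotomy ``$v\equiv0$ or $v$ nowhere vanishing''), and then read off $\frac{1}{2\pi}\int_\sigma(\xi\cdot\boldsymbol{\omega})\in\mathbb{Z}$ directly from the holonomy around loops. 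The two mechanisms are equivalent (your path formula is exactly the covering-space identity evaluated along lifts), but yours is slightly more self-contained in that it appeals only to the definition of an integral form rather than to Proposition~\ref{uni_cov}; strictly speaking one should note that integrality over arbitrary smooth $1$-cycles follows from integrality over loops because the period of a closed form depends only on the homology class, and that any two points of a connected manifold are joined by a smooth path — both routine. Your closing remark about distributional $u$ is not needed here, since the lemma concerns the kernel of $\mathbb{L}^0$ acting on $C^\infty(M\times\mathbb{T}^m)$.
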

	
	 	\begin{proof}
	 	The argument we employ is inspired by the approach in \cite[Lemmas~2.1 and 2.2]{BCP1996}. We split it into two steps.
		
		\noindent
		i) We begin with an auxiliary claim concerning the kernel of a simpler operator.
	 	Let $\alpha\in\mathsf{\Lambda}^1C^\infty(M)$ be a real-valued closed $1$-form, and define the first-order operator 
	 	\[
	 	\mathbb{D}: C^\infty(M)\to\mathsf{\Lambda}^1C^\infty(M),
	 	\qquad
	 	\mathbb{D}u = \mathrm{d}_t u + i\,\alpha\,u,
	 	\]
	 	where we write $\alpha\,u$ instead of $\alpha\wedge u$ since $u$ is scalar-valued. Assume that $\alpha$ is not integral.
	 	We show that $\ker\mathbb{D} = \{0\}$.
	 	
	 	\smallskip
	 	To this aim, let $u\in C^\infty(M)$ satisfy $\mathbb{D}u=0$. Let $\Pi:\hat M\to M$ be the universal covering, 
		and let $\chi\in C^\infty(\hat M)$ be such that $\mathrm{d}\chi = \Pi^*\alpha$. Then
	 	\[
	 	\mathrm{d}_t(\Pi^*u)
	 	= \Pi^*(\mathrm{d}_t u)
	 	= -\,i\,\Pi^*(\alpha u)
	 	= -\,i\,(\mathrm{d}\chi)\,\Pi^*u,
	 	\]
	 	so that
	 	\[
	 	\mathrm{d}_t(e^{i\chi}\Pi^*u)=0.
	 	\]
	 	Hence, $\Pi^*u = c\,e^{-i\chi}$ for some constant $c\in\mathbb{C}$. If $p\in M$ and $A,B\in\hat M$ satisfy $\Pi(A)=\Pi(B)=p$, then
	 	\[
	 	u(p)= c\,e^{-i\chi(A)} = c\,e^{-i\chi(B)}.
	 	\]
	 	If $c\neq 0$, we would obtain $e^{i(\chi(A)-\chi(B))}=1$ for all such points, which by Proposition~\ref{uni_cov}
	 	means that $\alpha$ is integral, a contradiction. Therefore $c=0$, and the claim follows.
	 	
	 	\noindent
	 	ii) Now, let $h\in C^\infty(M\times\mathbb{T}^m)$ satisfy $\mathbb{L}^0h=0$. For each $\xi\in\mathbb{Z}^m$, the Fourier coefficients satisfy
	 	\[
	 	\mathbb{L}_\xi^0\widehat{h}_\xi = \mathrm{d}_t\widehat{h}_\xi + i(\xi\cdot\boldsymbol{\omega})\, \widehat{h}_\xi = 0.
	 	\]
	 	If $\boldsymbol{\omega}$ is not rational, then $\xi\cdot\boldsymbol{\omega}$ is not integral for any $\xi\in\mathbb{Z}^m\setminus\{0\}$. By the previous
		step i), it follows $\widehat{h}_\xi=0$ for all such $\xi$.
	 	
	 	When $\xi=0$, the operator $\mathbb{L}_0^0$ reduces to the exterior derivative $\mathrm{d}_t$ on $M$.
		Since $M$ is connected, any $\widehat{h}_0$ satisfying $\mathrm{d}_t\widehat{h}_0=0$ must be constant. Therefore, $h$ is constant, as claimed.
	 \end{proof}

	\subsection{Global solvability and regularity in subsets of frequencies} \
	
	Motivated by the results of \cite[Section~4]{ADL2023gs}, we investigate the action of $\mathbb{L}^q$
	on subspaces of smooth differential forms whose Fourier series are supported on prescribed subsets of frequencies.
	
	Let $\Gamma\subset\mathbb{Z}^m$ be a subset. For each $q=0,\dots,n$, we define
	\[
	\mathsf{\Lambda}^{0,q}C_{\Gamma}^\infty(M\times\mathbb{T}^m) = \bigl\{ f\in \mathsf{\Lambda}^{0,q}C^\infty(M\times\mathbb{T}^m)\;:\; \widehat{f}_\xi(t)\equiv 0 \text{ for all } \xi\notin\Gamma \bigr\}.
	\]
	
	By Proposition~\ref{cont_fourier}, this is a closed subspace of $\mathsf{\Lambda}^{0,q} C^\infty(M\times \mathbb{T}^m)$. Hence, it is an FS space,
	and carries the natural projection
	\[
	f\longmapsto f_\Gamma = \sum_{\xi\in\Gamma}\widehat{f}_\xi(t)\,e^{i\xi\cdot x}.
	\]
	Moreover, we have the topological direct sum decomposition
	\[
	\mathsf{\Lambda}^{0,q}C^\infty(M\times\mathbb{T}^m) = \mathsf{\Lambda}^{0,q}C_{\Gamma}^\infty(M\times\mathbb{T}^m) \oplus
	\mathsf{\Lambda}^{0,q}C_{\mathbb{Z}^m\setminus\Gamma}^\infty(M\times\mathbb{T}^m).
	\]
	
	Analogously, we define the spaces of distributions $\mathsf{\Lambda}^{0,q} \mathscr{D}'_{\Gamma}(M\times\mathbb{T}^m)$. 
	The operator $\mathbb{L}^q$ then induces continuous linear maps
	\[
	\mathbb{L}_\Gamma^q: \mathsf{\Lambda}^{0,q}C^\infty_{\Gamma}(M\times\mathbb{T}^m) \to
	\mathsf{\Lambda}^{0,q+1}C^\infty_{\Gamma}(M\times\mathbb{T}^m)
	\]
	and
	\[
	\mathbb{L}_\Gamma^q: \mathsf{\Lambda}^{0,q}\mathscr{D}'_{\Gamma}(M\times\mathbb{T}^m)
	\to \mathsf{\Lambda}^{0,q+1}\mathscr{D}'_{\Gamma}(M\times\mathbb{T}^m),
	\]
	for $q=0,\dots,n-1$.
	
%
	\begin{proposition}
		The operator
		\[
		\mathbb{L}^q:
		\mathsf{\Lambda}^{0,q}C^\infty(M\times\mathbb{T}^m)
		\longrightarrow
		\mathsf{\Lambda}^{0,q+1} C^\infty(M\times\mathbb{T}^m)
		\]
		has closed range if and only if both the restricted operators
		\[
		\mathbb{L}_\Gamma^q:
		\mathsf{\Lambda}^{0,q}C^\infty_{\Gamma}(M\times\mathbb{T}^m)
		\to
		\mathsf{\Lambda}^{0,q+1}C^\infty_{\Gamma}(M\times\mathbb{T}^m)
		\]
		and
		\[
		\mathbb{L}_{\mathbb{Z}^m\setminus\Gamma}^q:
		\mathsf{\Lambda}^{0,q} C^\infty_{\mathbb{Z}^m\setminus\Gamma}(M\times\mathbb{T}^m)
		\to
		\mathsf{\Lambda}^{0,q+1}C^\infty_{\mathbb{Z}^m\setminus\Gamma}(M\times\mathbb{T}^m)
		\]
		have closed range.
	\end{proposition}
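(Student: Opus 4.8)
The plan is to exploit the fact that $\mathbb{L}^q$ does not mix Fourier frequencies, so that it respects the given topological direct sum decomposition, and then to reduce the statement to a purely functional-analytic fact about ranges of operators that split over a topological direct sum.

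First I would record that $\mathbb{L}^q$ commutes with the Fourier projections. Writing $\mathbb{L}^q = \mathrm{d}_t + \sum_{k=1}^m \omega_k \wedge \partial_{x_k}$, each summand acts diagonally on the partial Fourier expansion: if $f = \sum_{\xi} \widehat{f}_\xi(t)\, e^{i\xi\cdot x}$, then $\partial_{x_k}$ multiplies the $\xi$-th coefficient by $i\xi_k$, while $\mathrm{d}_t$ acts only in the $t$-variable, so $\widehat{(\mathbb{L}^q f)}_\xi$ depends solely on $\widehat{f}_\xi$. Consequently $\mathbb{L}^q$ maps $\mathsf{\Lambda}^{0,q}C^\infty_\Gamma(M\times\mathbb{T}^m)$ into $\mathsf{\Lambda}^{0,q+1}C^\infty_\Gamma(M\times\mathbb{T}^m)$ and $\mathsf{\Lambda}^{0,q}C^\infty_{\mathbb{Z}^m\setminus\Gamma}(M\times\mathbb{T}^m)$ into $\mathsf{\Lambda}^{0,q+1}C^\infty_{\mathbb{Z}^m\setminus\Gamma}(M\times\mathbb{T}^m)$; with respect to the topological direct sum decompositions of source and target it therefore factors as $\mathbb{L}^q = \mathbb{L}^q_\Gamma \oplus \mathbb{L}^q_{\mathbb{Z}^m\setminus\Gamma}$.

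Next I would isolate the abstract splitting lemma: if $L = L_1 \oplus L_2 : E_1 \oplus E_2 \to F_1 \oplus F_2$ is a continuous linear map between topological direct sums, then $\mathrm{ran}(L)$ is closed in $F_1 \oplus F_2$ if and only if each $\mathrm{ran}(L_i)$ is closed in $F_i$. For the reverse implication one observes that $\mathrm{ran}(L) = \mathrm{ran}(L_1) \times \mathrm{ran}(L_2)$ as subsets of the product $F_1 \times F_2$, and a product of closed sets is closed in the product topology. For the forward implication, I identify $F_1$ with the closed subspace $F_1 \times \{0\}$ (a topological direct summand is closed), and note the identity $\mathrm{ran}(L) \cap (F_1 \times \{0\}) = \mathrm{ran}(L_1) \times \{0\}$, valid precisely because $L$ respects the splitting: a point $(L_1 x_1, L_2 x_2)$ lies in $F_1 \times \{0\}$ iff $L_2 x_2 = 0$, and then the first coordinate ranges over all of $\mathrm{ran}(L_1)$. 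Since $\mathrm{ran}(L)$ and $F_1 \times \{0\}$ are both closed, so is their intersection, whence $\mathrm{ran}(L_1)$ is closed in $F_1$; the argument for $F_2$ is identical.

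Finally I apply this lemma with $E_i, F_i$ the frequency subspaces, which are closed subspaces of the ambient FS spaces (by Proposition~\ref{cont_fourier}), hence themselves FS, and which assemble into the stated topological direct sums. I do not expect a serious obstacle: the only point demanding care is the verification that $\mathbb{L}^q$ genuinely preserves each summand, which makes the intersection identity $\mathrm{ran}(L)\cap(F_1\times\{0\}) = \mathrm{ran}(L_1)\times\{0\}$ valid; this is exactly the frequency-diagonality recorded in the first step. Metrizability plays no role, since both directions of the abstract lemma concern only the product topology and intersections of closed sets. Alternatively, one could phrase the entire argument in terms of sequential closedness and invoke Lemma~\ref{lema_gabriel}.
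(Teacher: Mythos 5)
Your proposal is correct and follows essentially the same route as the paper: both arguments rest on the frequency subspaces being closed (via Proposition~\ref{cont_fourier}), the resulting topological direct sum decomposition, and the fact that $\mathbb{L}^q$ acts diagonally so that $\mathrm{ran}(\mathbb{L}^q)=\mathrm{ran}(\mathbb{L}^q_\Gamma)\oplus\mathrm{ran}(\mathbb{L}^q_{\mathbb{Z}^m\setminus\Gamma})$. The paper leaves the final functional-analytic step implicit ("and the statement follows"), whereas you spell it out with the intersection identity and the closedness of products, which is a faithful elaboration of the same idea.
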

	
	\begin{proof}
		By Proposition~\ref{cont_fourier}, the map $f\mapsto \widehat{f}_\xi$ is continuous for all $\xi\in\mathbb{Z}^m$. Hence, the subspaces
		$\mathsf{\Lambda}^{0,q}C^\infty_{\Gamma}(M\times\mathbb{T}^m)$ and
		$\mathsf{\Lambda}^{0,q}C^\infty_{\mathbb{Z}^m\setminus\Gamma}(M\times\mathbb{T}^m)$
		are closed, and we have the topological direct sum decomposition
		\[
		\mathsf{\Lambda}^{0,q}C^\infty
		= \mathsf{\Lambda}^{0,q}C^\infty_{\Gamma}
		\oplus
		\mathsf{\Lambda}^{0,q}C^\infty_{\mathbb{Z}^m\setminus\Gamma}.
		\]
		Since $\mathbb{L}^q$ acts diagonally with respect to this decomposition, we obtain
		\[
		\mathrm{ran}(\mathbb{L}^q)
		= \mathrm{ran}(\mathbb{L}_\Gamma^q)
		\oplus
		\mathrm{ran}(\mathbb{L}_{\mathbb{Z}^m\setminus\Gamma}^q),
		\]
		and the statement follows.
	\end{proof}

	\begin{definition}\label{def_gLiou}
		Let $\boldsymbol{\omega}=(\omega_1,\dots,\omega_m)$ be a family of real-valued closed $1$-forms on~$M$. We say that:
		\begin{enumerate}
			\item $\boldsymbol{\omega}$ is \emph{$\Gamma$-rational} if there exists $\xi\in\Gamma$ such that $\xi\cdot\boldsymbol{\omega}$ is an integral $1$-form;
			\item $\boldsymbol{\omega}$ is \emph{$\Gamma$-Liouville} if it is not $\Gamma$-rational and there exist
			a sequence of closed integral $1$-forms $\{\omega_j\}$ and a sequence $\{\xi^{(j)}\}\subset\Gamma$
			with $|\xi^{(j)}|\to\infty$ such that $\{\,|\xi^{(j)}|^j(\xi^{(j)}\cdot\boldsymbol{\omega}-\omega_j)\,\}$ is bounded in
			$\mathsf{\Lambda}^1 C^\infty(M)$.
		\end{enumerate}
	\end{definition}
	
	Clearly, $\Gamma$ must be infinite for a system $\boldsymbol{\omega}$ to be $\Gamma$-Liouville.
	
	\begin{definition}\label{def_DCg}
		A matrix $\boldsymbol{A}\in M_{d\times m}(\mathbb{R})$ satisfies the \emph{Diophantine condition $(\mathrm{DC}_\Gamma)$} if there exist constants $C,\rho>0$ such that 
		\begin{equation}\label{DCg}
			|\eta+\boldsymbol{A}\xi|\geq C(|\eta|+|\xi|)^{-\rho},	\tag{DC$_\Gamma$}
		\end{equation}
		for all $(\xi,\eta)\in\Gamma\times\mathbb{Z}^d\setminus\{(0,0)\}$.
	\end{definition}
	
	By adapting the argument of \cite[Lemma~3.7]{CKMT} to the present setting, we obtain the following characterization.
	
	\begin{lemma}
		If a matrix $\boldsymbol{A}\in M_{d\times m}(\mathbb{R})$ satisfies the Diophantine condition~\eqref{DCg}, then there exist constants $C>0$ and $N\in\mathbb{N}$ such that
		\begin{equation*}
			\max_{1\leq\ell\leq d}|1-e^{2\pi i\xi\cdot a_\ell}|\geq C(1+|\xi|)^{-N},\quad \xi\in\Gamma\setminus\{0\},
		\end{equation*}
		where $a_\ell\in\mathbb{R}^m$ is the $\ell$-th row of $\boldsymbol{A}$.
	\end{lemma}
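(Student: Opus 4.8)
The plan is to reduce the claimed estimate to the Diophantine condition \eqref{DCg} via the elementary inequality relating $|1-e^{2\pi i\theta}|$ to the distance from $\theta$ to the integers, exactly as in the proof of Lemma~\ref{lemma_dioph} (that is, \cite[Lemma~3.7]{CKMT}). The only modification is that every frequency $\xi$ entering the argument is required to lie in $\Gamma$, which is precisely the set on which \eqref{DCg} is assumed to hold; the number-theoretic core of the proof is unchanged.

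First I would record the pointwise bound $|1-e^{2\pi i\theta}| = 2|\sin(\pi\theta)| \geq 4\,\mathrm{dist}(\theta,\mathbb{Z})$, valid for every $\theta\in\mathbb{R}$, which follows from $\sin(\pi s)\geq 2s$ on $[0,\tfrac12]$ after reducing $\theta$ modulo $1$. Applying this with $\theta = \xi\cdot a_\ell = (\boldsymbol{A}\xi)_\ell$ gives
\[
\max_{1\leq\ell\leq d}\bigl|1-e^{2\pi i\,\xi\cdot a_\ell}\bigr| \;\geq\; 4\max_{1\leq\ell\leq d}\mathrm{dist}\bigl((\boldsymbol{A}\xi)_\ell,\mathbb{Z}\bigr).
\]

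Next, I fix $\xi\in\Gamma\setminus\{0\}$ and choose $\eta=\eta(\xi)\in\mathbb{Z}^d$ to be the componentwise nearest-integer vector to $-\boldsymbol{A}\xi$, so that $|(\boldsymbol{A}\xi)_\ell+\eta_\ell| = \mathrm{dist}((\boldsymbol{A}\xi)_\ell,\mathbb{Z})$ for each $\ell$. Up to the equivalence of norms on $\mathbb{R}^d$, this yields $|\eta+\boldsymbol{A}\xi| \leq C_1\max_\ell \mathrm{dist}((\boldsymbol{A}\xi)_\ell,\mathbb{Z})$ for a dimensional constant $C_1$. Since $\xi\neq 0$, the pair $(\xi,\eta)$ lies in $\Gamma\times\mathbb{Z}^d\setminus\{(0,0)\}$, so \eqref{DCg} applies and gives $|\eta+\boldsymbol{A}\xi|\geq C(|\eta|+|\xi|)^{-\rho}$. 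Because $\eta$ is nearest to $-\boldsymbol{A}\xi$, its size is controlled by the entries of $\boldsymbol{A}$, namely $|\eta|\leq |\boldsymbol{A}\xi| + \tfrac{\sqrt{d}}{2} \leq C_2(1+|\xi|)$, whence $|\eta|+|\xi|\leq C_3(1+|\xi|)$. Combining the three displays produces
\[
\max_{1\leq\ell\leq d}\bigl|1-e^{2\pi i\,\xi\cdot a_\ell}\bigr| \;\geq\; \frac{4}{C_1}\,C\,(|\eta|+|\xi|)^{-\rho} \;\geq\; \frac{4C}{C_1}\,C_3^{-\rho}\,(1+|\xi|)^{-\rho},
\]
and taking any integer $N\geq\rho$, so that $(1+|\xi|)^{-\rho}\geq(1+|\xi|)^{-N}$, together with the corresponding constant, completes the proof.

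I expect no genuine obstacle here beyond bookkeeping: the argument is purely number-theoretic and does not interact with the geometry of $M$ or with the forms $\boldsymbol{\omega}$ directly, and the restriction to $\Gamma$ causes no difficulty precisely because the auxiliary pair $(\xi,\eta)$ always has its first coordinate in $\Gamma$. The single point that deserves explicit mention is verifying $(\xi,\eta)\neq(0,0)$ before invoking \eqref{DCg}; this is immediate in the present formulation since $\xi\in\Gamma\setminus\{0\}$, so no separate case analysis is needed.
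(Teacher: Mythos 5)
Your proposal is correct and is exactly the argument the paper has in mind: the paper gives no explicit proof here, deferring to an adaptation of \cite[Lemma~3.7]{CKMT}, and your writeup (reduce $|1-e^{2\pi i\theta}|$ to $\mathrm{dist}(\theta,\mathbb{Z})$, pick the nearest-integer vector $\eta$, invoke \eqref{DCg} with $(\xi,\eta)\in\Gamma\times\mathbb{Z}^d\setminus\{(0,0)\}$, and bound $|\eta|$ by $C(1+|\xi|)$) is that standard adaptation spelled out in full. All the estimates check, including the nondegeneracy of the pair $(\xi,\eta)$ and the final passage from $\rho$ to an integer $N\geq\rho$.
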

	
	\begin{proposition}\label{liou_DC}
		The family $\boldsymbol{\omega}$ satisfies:
		\begin{itemize}
			\item[(i)] $\boldsymbol{\omega}$ is $\Gamma$-rational if and only if 
			$A(\boldsymbol{\omega})(\Gamma\setminus\{0\})\cap\mathbb{Z}^d\neq\varnothing$;
			\item[(ii)] $\boldsymbol{\omega}$ is $\Gamma$-Liouville if and only if it is not $\Gamma$-rational and $A(\boldsymbol{\omega})$ does not satisfy~\eqref{DCg}.
		\end{itemize}
	\end{proposition}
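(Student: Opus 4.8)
The plan is to adapt, almost verbatim, the proof of Proposition~\ref{prop_rat_Liouville_matrix} (i.e.\ \cite[Proposition~3.8]{CKMT}), the only difference being that every sequence of frequencies is now constrained to lie in $\Gamma$ and the Diophantine inequality is tested only over $(\xi,\eta)\in\Gamma\times\mathbb{Z}^d$. The bridge between the two pictures is the single-frequency equivalence $\xi\cdot\boldsymbol{\omega}$ is integral $\Leftrightarrow A(\boldsymbol{\omega})\xi\in\mathbb{Z}^d$, valid for each $\xi\in\mathbb{Z}^m$. This follows from the identity $A(\boldsymbol{\omega})\xi=\tfrac{1}{2\pi}\bigl(\int_{\sigma_1}\xi\cdot\boldsymbol{\omega},\dots,\int_{\sigma_d}\xi\cdot\boldsymbol{\omega}\bigr)^{\!\top}$ together with the fact, inherited from the scattering--Hodge framework of \cite{CKMT}, that a form in $\mathsf{\Lambda}^1 C^\infty_{\partial M}(M)$ is integral precisely when its periods over the basis loops $\sigma_1,\dots,\sigma_d$ lie in $2\pi\mathbb{Z}$. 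Granting this, part~(i) is immediate: $\boldsymbol{\omega}$ is $\Gamma$-rational iff some $\xi\in\Gamma\setminus\{0\}$ makes $\xi\cdot\boldsymbol{\omega}$ integral, iff $A(\boldsymbol{\omega})\xi\in\mathbb{Z}^d$ for some such $\xi$, that is, $A(\boldsymbol{\omega})(\Gamma\setminus\{0\})\cap\mathbb{Z}^d\neq\varnothing$.

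For the forward implication of part~(ii), I would assume $\boldsymbol{\omega}$ is $\Gamma$-Liouville, with data $\{\omega_j\}$ integral, $\{\xi^{(j)}\}\subset\Gamma$, $|\xi^{(j)}|\to\infty$, and $\beta_j:=|\xi^{(j)}|^j(\xi^{(j)}\cdot\boldsymbol{\omega}-\omega_j)$ bounded in $\mathsf{\Lambda}^1 C^\infty(M)$. Setting $\eta^{(j)}_\ell:=-\tfrac{1}{2\pi}\int_{\sigma_\ell}\omega_j\in\mathbb{Z}$ (an integer because $\omega_j$ is integral), the $\ell$-component of $A(\boldsymbol{\omega})\xi^{(j)}+\eta^{(j)}$ equals $\tfrac{1}{2\pi}\int_{\sigma_\ell}(\xi^{(j)}\cdot\boldsymbol{\omega}-\omega_j)$. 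Since integration over each fixed compact loop $\sigma_\ell$ is a continuous linear functional on $\mathsf{\Lambda}^1 C^\infty(M)$, boundedness of $\beta_j$ yields $|A(\boldsymbol{\omega})\xi^{(j)}+\eta^{(j)}|\le C|\xi^{(j)}|^{-j}$. As $|\eta^{(j)}|\lesssim|\xi^{(j)}|$, this decays faster than any fixed power of $(|\eta^{(j)}|+|\xi^{(j)}|)$, so \eqref{DCg} cannot hold; together with non-$\Gamma$-rationality (built into the definition of $\Gamma$-Liouville), this is the claim.

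For the converse, assume $\boldsymbol{\omega}$ is not $\Gamma$-rational and \eqref{DCg} fails. Taking $C=1/j$, $\rho=j$, for each $j$ I extract $(\xi^{(j)},\eta^{(j)})\in\Gamma\times\mathbb{Z}^d\setminus\{(0,0)\}$ with $|A(\boldsymbol{\omega})\xi^{(j)}+\eta^{(j)}|<\tfrac1j(|\eta^{(j)}|+|\xi^{(j)}|)^{-j}$. A short argument gives $\xi^{(j)}\neq0$ (else $|\eta^{(j)}|<1$ forces $\eta^{(j)}=0$, excluded) and $|\xi^{(j)}|\to\infty$: any bounded subsequence of $\{\xi^{(j)}\}$ would take finitely many values in $\Gamma\setminus\{0\}$, and $\mathrm{dist}(A(\boldsymbol{\omega})\xi^{(j)},\mathbb{Z}^d)\to0$ would force $A(\boldsymbol{\omega})\xi\in\mathbb{Z}^d$ for one of them, contradicting non-$\Gamma$-rationality via part~(i). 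To produce the integral forms I use the finite-dimensionality of $H^1_{\partial M}(M)$ to define the continuous linear section $\Theta:\mathbb{R}^d\to\mathsf{\Lambda}^1 C^\infty_{\partial M}(M)$, $\Theta(v)=\sum_\ell v_\ell\vartheta_\ell$, which satisfies $\int_{\sigma_k}\Theta(v)=v_k$, and set $\omega_j:=\xi^{(j)}\cdot\boldsymbol{\omega}-\Theta\bigl(2\pi(A(\boldsymbol{\omega})\xi^{(j)}+\eta^{(j)})\bigr)$. Each $\omega_j$ is closed, lies in $\mathsf{\Lambda}^1 C^\infty_{\partial M}(M)$, and has periods $\int_{\sigma_k}\omega_j=-2\pi\eta^{(j)}_k\in2\pi\mathbb{Z}$, hence is integral by the basis-period characterization above. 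Moreover $|\xi^{(j)}|^j(\xi^{(j)}\cdot\boldsymbol{\omega}-\omega_j)=\Theta\bigl(2\pi|\xi^{(j)}|^j(A(\boldsymbol{\omega})\xi^{(j)}+\eta^{(j)})\bigr)$, and since $2\pi|\xi^{(j)}|^j\,|A(\boldsymbol{\omega})\xi^{(j)}+\eta^{(j)}|<2\pi/j\to0$ while $\Theta$ is continuous from a finite-dimensional space, this family is bounded (indeed convergent to $0$) in $\mathsf{\Lambda}^1 C^\infty(M)$. With $\{\xi^{(j)}\}\subset\Gamma$, $|\xi^{(j)}|\to\infty$, and non-$\Gamma$-rationality, this exhibits $\boldsymbol{\omega}$ as $\Gamma$-Liouville.

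The main obstacle I anticipate is the converse, and specifically the two finite-dimensional inputs that make the construction legitimate: first, establishing $|\xi^{(j)}|\to\infty$ cleanly, which is exactly where non-$\Gamma$-rationality (through part~(i)) must be invoked to preclude a bounded resonant subsequence; and second, justifying that integrality of a form in $\mathsf{\Lambda}^1 C^\infty_{\partial M}(M)$ is detected by its finitely many periods over $\sigma_1,\dots,\sigma_d$, so that the explicitly built $\omega_j$ is genuinely integral and $\Theta$ delivers uniform control of \emph{all} the $C^\infty$-seminorms at once. Both facts come from the scattering--Hodge structure of \cite{CKMT}; once they are in hand, restricting every sequence to $\Gamma$ leaves all the estimates unchanged.
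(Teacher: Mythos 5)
Your proposal is correct and follows exactly the route the paper intends: the paper's proof is a one-line deferral to \cite[Proposition~3.8]{CKMT}, and your argument is precisely that proof transplanted with all frequency sequences confined to $\Gamma$, including the one genuinely $\Gamma$-specific point (invoking non-$\Gamma$-rationality via part~(i) to rule out a bounded subsequence of $\{\xi^{(j)}\}$), which you handle correctly. The two external inputs you flag --- the detection of integrality by the periods over $\sigma_1,\dots,\sigma_d$ and the continuity of the section $\Theta$ --- are exactly the facts the cited reference supplies, so relying on them is consistent with the paper's own treatment.
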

	
	\begin{proof}
		The proof is analogous to that of~\cite[Proposition~3.8]{CKMT}.
	\end{proof}
	
	\begin{definition}
		We say that the operator $\mathbb{L}^0$ is \emph{$\Gamma$-globally hypoelliptic} if
		\[
		u\in\mathscr{D}_\Gamma(M\times\mathbb{T}^m),\quad
		\mathbb{L}^0u\in\mathsf{\Lambda}^{0,1}C^\infty(M\times\mathbb{T}^m)
		\ \Rightarrow\ 
		u\in C^\infty(M\times\mathbb{T}^m).
		\]
	\end{definition}
	
	\begin{proposition}\label{gamma_gh}
		If $\boldsymbol{\omega}$ is neither $\Gamma$-rational nor $\Gamma$-Liouville, then 
		$\mathbb{L}_{\Gamma}^0$ is $\Gamma$-globally hypoelliptic. 
		In particular, if $\boldsymbol{\omega}$ is not $\Gamma$-Liouville, then 
		$\mathbb{L}_{\Gamma}^0$ has closed range.
	\end{proposition}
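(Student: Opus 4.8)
The plan is to treat the two assertions separately: the first is a $\Gamma$-localized version of the global hypoellipticity theorem of \cite{CKMT}, while the second is deduced from it through the abstract closed-range criterion of Theorem~\ref{FS_GHGS}.

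For the first assertion, assume $\boldsymbol{\omega}$ is neither $\Gamma$-rational nor $\Gamma$-Liouville; by Proposition~\ref{liou_DC} this is equivalent to $A(\boldsymbol{\omega})$ satisfying~\eqref{DCg}. Let $u\in\mathscr{D}'_\Gamma(M\times\mathbb{T}^m)$ satisfy $\mathbb{L}^0u=f\in\mathsf{\Lambda}^{0,1}C^\infty(M\times\mathbb{T}^m)$. Taking partial Fourier coefficients yields, for each $\xi\in\Gamma$,
\[
\mathbb{L}_\xi^0\widehat{u}_\xi=\mathrm{d}_t\widehat{u}_\xi+i(\xi\cdot\boldsymbol{\omega})\widehat{u}_\xi=\widehat{f}_\xi,
\]
and \cite[Corollary~2.4]{CKMT} ensures that each $\widehat{u}_\xi$ is smooth. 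It remains to verify the uniform decay characterizing membership in $C^\infty(M\times\mathbb{T}^m)$, namely that for every compact $K\subset M$, multi-index $\alpha$, and $N\in\mathbb{N}_0$ there is $C>0$ with $\sup_{K}|\partial_t^\alpha\widehat{u}_\xi|\le C(1+|\xi|)^{-N}$ for all $\xi\in\Gamma$. I would obtain this exactly as in the proof of Theorem~\ref{thm_global_hypo}: writing $\xi\cdot\boldsymbol{\omega}=\mathrm{d}_tg_\xi+2\pi\sum_\ell(A(\boldsymbol{\omega})\xi)_\ell\vartheta_\ell$ and conjugating $\mathbb{L}_\xi^0$ by $e^{ig_\xi}$ reduces the operator to its harmonic part, after which the small-divisor estimate of Lemma~\ref{lemma_dioph}, now restricted to $\xi\in\Gamma$, produces the required bounds. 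Since every frequency occurring in the argument lies in $\Gamma$, it suffices that~\eqref{DCg} hold on $\Gamma$, and we conclude $u\in C^\infty(M\times\mathbb{T}^m)$, i.e. $\Gamma$-global hypoellipticity.

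For the second assertion, the point is that $\Gamma$-global hypoellipticity is precisely property $(\mathcal{H})$ for the map of pairs
\[
\mathbb{L}_\Gamma^0:\bigl(\mathscr{D}'_\Gamma,\ \mathsf{\Lambda}^{0,0}C_\Gamma^\infty\bigr)\longrightarrow\bigl(\mathsf{\Lambda}^{0,1}\mathscr{D}'_\Gamma,\ \mathsf{\Lambda}^{0,1}C_\Gamma^\infty\bigr),
\]
hence a fortiori property $(\mathcal{H}')$. Being closed subspaces of FS spaces, the spaces $\mathsf{\Lambda}^{0,q}C_\Gamma^\infty$ are themselves FS and admit the nested Banach realizations required by Theorem~\ref{FS_GHGS}, while $\mathsf{\Lambda}^{0,1}\mathscr{D}'_\Gamma$ is Hausdorff; that theorem then gives closed range. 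Thus, whenever $\boldsymbol{\omega}$ is not $\Gamma$-rational---in particular for the non-integral frequency set $\Gamma=\Gamma_1$ relevant to Theorem~\ref{thm_solv}---the hypothesis ``not $\Gamma$-Liouville'' coincides with ``neither $\Gamma$-rational nor $\Gamma$-Liouville,'' the first assertion applies, and closed range follows. To accommodate residual integral frequencies in the general statement I would split $\Gamma=\Gamma_{\mathrm{int}}\sqcup\Gamma_{\mathrm{non}}$ according to whether $\xi\cdot\boldsymbol{\omega}$ is integral, use the direct-sum decomposition recorded above (closed range for $\mathbb{L}_\Gamma^0$ is equivalent to closed range for both frequency-restricted components), and observe that on $\Gamma_{\mathrm{int}}$ the identity $\mathbb{L}_\xi^0=e^{-i\psi_\xi}\mathrm{d}_t\,e^{i\psi_\xi}$, with $e^{i\psi_\xi}\in C^\infty(M)$ single-valued since $\xi\cdot\boldsymbol{\omega}=\mathrm{d}_t\psi_\xi$ is integral, exhibits $\mathbb{L}_{\Gamma_{\mathrm{int}}}^0$ as topologically conjugate to the partial exterior derivative $\mathrm{d}_t$, whose range is closed by the de~Rham theorem; the component $\mathbb{L}_{\Gamma_{\mathrm{non}}}^0$ is then covered by the first assertion.

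The main obstacle is the quantitative core of the first assertion: converting~\eqref{DCg} into the uniform polynomial bounds on $\partial_t^\alpha\widehat{u}_\xi$. The conjugating functions $g_\xi$ have differentials growing linearly in $\xi$, so one must check that the small-divisor gains supplied by Lemma~\ref{lemma_dioph} dominate the polynomial losses incurred in inverting the reduced harmonic operator, uniformly over $\xi\in\Gamma$; this is the estimate carried out in \cite{CKMT}, and the only genuinely new feature here is the systematic restriction to $\xi\in\Gamma$. A secondary, bookkeeping-level point in the second assertion is verifying that the conjugation on $\Gamma_{\mathrm{int}}$ is a topological isomorphism of the ambient FS spaces, i.e. that multiplication by the family $\{e^{i\psi_\xi}\}$ preserves the Fourier-coefficient decay defining smoothness.
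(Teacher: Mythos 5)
Your proposal is correct and follows essentially the same route as the paper, whose proof of this proposition consists of exactly the two steps you reconstruct: the first assertion is obtained by rerunning the argument of \cite[Theorem~3.3]{CKMT} with all frequencies restricted to $\Gamma$, and the second by feeding the resulting property $(\mathcal{H}')$ into the abstract closed-range criterion (the paper cites Theorem~\ref{gs_agh_equiv}, which rests on the same Theorem~\ref{FS_GHGS} you invoke). Your additional splitting $\Gamma=\Gamma_{\mathrm{int}}\sqcup\Gamma_{\mathrm{non}}$ is a welcome refinement: since a $\Gamma$-rational family is by definition not $\Gamma$-Liouville, the hypothesis of the second assertion does not exclude integral frequencies, a case the paper's one-line deduction glosses over but which you correctly dispatch by conjugating to $\mathrm{d}_t$ exactly as in Theorem~\ref{top_iso} and Proposition~\ref{cl_gamma0}.
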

	
	\begin{proof}
		The first assertion follows from the same arguments as in~\cite[Theorem~3.3]{CKMT}. 
		The second one is then a direct consequence of Theorem~\ref{gs_agh_equiv}.
	\end{proof}
	
	\begin{theorem}\label{ECc_horm}
		Suppose that the family $\boldsymbol{\omega}$ is $\Gamma$-Liouville for some subset $\Gamma\subset\mathbb{Z}^m$.  Then the operator $\mathbb{L}^{n-1}$ is not globally $(\mathcal{E}',C_c^\infty)$-solvable.
	\end{theorem}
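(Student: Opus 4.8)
The plan is to argue by contraposition. Assuming that $\mathbb{L}^{n-1}$ is globally $(\mathcal{E}',C_c^\infty)$-solvable, Proposition~\ref{horm_est} furnishes, for every compact $K\subset M$, \emph{fixed} constants $C>0$ and seminorm indices $k,\ell$ (depending only on $K$) such that
\[
\left|\int_{M\times\mathbb{T}^m}\phi\,f\wedge\mathrm{d}x\right|\le C\,\|f\|_k\,\|\mathbb{L}^0\phi\|_\ell
\]
holds for all $f\in\mathsf{\Lambda}^{0,n}C_c^\infty(K\times\mathbb{T}^m)$ satisfying the compatibility condition~\eqref{solv_EE} and all $\phi\in C^\infty(M\times\mathbb{T}^m)$. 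I would then use the $\Gamma$-Liouville data to build families $\phi_j$ and $f_j$ that keep the left-hand side bounded away from $0$ while driving the right-hand side to $0$, contradicting the estimate.

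Concretely, let $\{\omega_j\}$ and $\{\xi^{(j)}\}\subset\Gamma$ with $|\xi^{(j)}|\to\infty$ be as in Definition~\ref{def_gLiou}, so that $\{|\xi^{(j)}|^j(\xi^{(j)}\cdot\boldsymbol{\omega}-\omega_j)\}$ is bounded in $\mathsf{\Lambda}^1C^\infty(M)$. Since each $\omega_j$ is a closed integral $1$-form, Proposition~\ref{uni_cov} lets me descend $e^{-i\chi_j}$ (with $\mathrm{d}\chi_j=\Pi^*\omega_j$ on the universal cover) to a well-defined $h_j\in C^\infty(M)$ with $|h_j|\equiv1$ and $\mathrm{d}_t h_j=-i\,\omega_j h_j$. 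Setting $\phi_j(t,x)=h_j(t)\,e^{i\xi^{(j)}\cdot x}$, a direct computation yields $\mathbb{L}^0\phi_j=i\,(\xi^{(j)}\cdot\boldsymbol{\omega}-\omega_j)\,h_j\,e^{i\xi^{(j)}\cdot x}$. Fixing a bump $\chi\in C_c^\infty(M)$ with $\chi\ge0$, $\chi\not\equiv0$ and $\mathrm{supp}\,\chi\subset K$, together with a global volume form $\mathrm{d}V_M$ (available since $M$ is orientable), I set $f_j=\overline{h_j}\,e^{-i\xi^{(j)}\cdot x}\,\chi\,\mathrm{d}V_M\in\mathsf{\Lambda}^{0,n}C_c^\infty(K\times\mathbb{T}^m)$. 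Because $|h_j|\equiv1$, the pairing collapses to $\int_{M\times\mathbb{T}^m}\phi_j f_j\wedge\mathrm{d}x=(2\pi)^m\int_M\chi\,\mathrm{d}V_M$, a positive constant independent of $j$.

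It then remains to check the compatibility condition and to estimate the two seminorms. For $\phi\in C^\infty(M\times\mathbb{T}^m)\cap\ker\mathbb{L}^0$, integrating in $x$ shows $\langle f_j,\phi\rangle$ is a multiple of $\int_M\overline{h_j}\,\chi\,\widehat{\phi}_{\xi^{(j)}}\,\mathrm{d}V_M$; since $\xi^{(j)}\in\Gamma$ and $\boldsymbol{\omega}$ is not $\Gamma$-rational, $\xi^{(j)}\cdot\boldsymbol{\omega}$ is non-integral, so step~(i) of Lemma~\ref{kernel_L0} forces $\widehat{\phi}_{\xi^{(j)}}\equiv0$, whence $\langle f_j,\phi\rangle=0$ and~\eqref{solv_EE} holds. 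For the seminorms, the boundedness of $\{|\xi^{(j)}|^j(\xi^{(j)}\cdot\boldsymbol{\omega}-\omega_j)\}$ in every seminorm of $\mathsf{\Lambda}^1C^\infty(M)$ gives $\|\xi^{(j)}\cdot\boldsymbol{\omega}-\omega_j\|_{C^s(K)}\lesssim|\xi^{(j)}|^{-j}$, while $\mathrm{d}_t h_j=-i\omega_j h_j$ gives $\|\partial_t^\beta h_j\|_{C^0(K)}\lesssim|\xi^{(j)}|^{|\beta|}$; combined with the factors of $|\xi^{(j)}|$ produced by $x$-derivatives of $e^{\pm i\xi^{(j)}\cdot x}$, this yields $\|f_j\|_k\lesssim|\xi^{(j)}|^{2k}$ and $\|\mathbb{L}^0\phi_j\|_\ell\lesssim|\xi^{(j)}|^{2\ell-j}$.

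The main obstacle, and the crux of the argument, is precisely this balancing: since $\|f_j\|_k$ grows, one might fear the product on the right blows up. The resolution is that $k$ and $\ell$ are \emph{fixed} by Proposition~\ref{horm_est} once $K$ is chosen, while the Liouville exponent $j$ grows without bound; hence $C\,\|f_j\|_k\,\|\mathbb{L}^0\phi_j\|_\ell\lesssim|\xi^{(j)}|^{2k+2\ell-j}\to0$, whereas the left-hand side remains equal to the positive constant $(2\pi)^m\int_M\chi\,\mathrm{d}V_M$. This contradiction shows the a priori estimate cannot hold, and therefore $\mathbb{L}^{n-1}$ is not globally $(\mathcal{E}',C_c^\infty)$-solvable.
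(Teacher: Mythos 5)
Your proposal is correct and follows essentially the same route as the paper: the same Hörmander estimate from Proposition~\ref{horm_est}, the same test pair (your $\phi_j$, $f_j$ coincide with the paper's $g_j$, $f_j$ up to notation, with $h_j=e^{-i\psi_j}$), the same constant lower bound for the pairing, and the same observation that the fixed seminorm orders $k,\ell$ are eventually beaten by the Liouville exponent $j$. Your verification of the compatibility condition~\eqref{solv_EE} is in fact slightly more careful than the paper's: you check it frequency by frequency using step~(i) of Lemma~\ref{kernel_L0} and only the non-$\Gamma$-rationality of $\boldsymbol{\omega}$, whereas the paper invokes the full statement of Lemma~\ref{kernel_L0}, whose hypothesis is that $\boldsymbol{\omega}$ is not rational.
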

	
	\begin{proof}
		Assume that $\boldsymbol{\omega}$ is $\Gamma$-Liouville. Then there exist a sequence of closed integral $1$-forms $\{\theta_j\}_{j\in\mathbb{N}}$ and a sequence $\{\xi^{(j)}\}_{j\in\mathbb{N}}\subset\Gamma$ such that $|\xi^{(j)}|\to\infty$ and the family $\{|\xi^{(j)}|^j(\xi^{(j)}\cdot\boldsymbol{\omega}-\theta_j)\}$ is bounded in $\mathsf{\Lambda}^1 C^\infty(M)$.
		
		For each $j\in\mathbb{N}$, let $\psi_j\in C^\infty(\hat M)$ satisfy $\mathrm{d}\psi_j=\Pi^*\theta_j$. Let $\Omega\in\mathsf{\Lambda}^n C^\infty(M)$ be a nonvanishing $n$-form, which exists since $M$ is assumed to be orientable. Fix $\varphi\in C_c^\infty(M)$ with $\varphi\ge 0$ and $\varphi>0$ on the interior of its compact support $K\subset M$.

		Since $e^{i\psi_j}\in C^\infty(M)$ by Proposition~\ref{uni_cov}, define
		\[
		f_j(t,x)=e^{-i(\xi^{(j)}\cdot x-\psi_j(t))}\,\varphi(t)\,\Omega(t)
		\in\mathsf{\Lambda}^{0,n}C_c^\infty(K\times\mathbb{T}^m),
		\]
		and
		\[
		g_j(t,x)=e^{i(\xi^{(j)}\cdot x-\psi_j(t))}\in C^\infty(M\times\mathbb{T}^m).
		\]
		
		By construction, each $f_j$ belongs to 
		$\mathsf{\Lambda}^{0,n}C_c^\infty(K\times\mathbb{T}^m)$. 
		Moreover, from Lemma~\ref{kernel_L0}, we know that 
		$\ker\mathbb{L}^0\cap C^\infty(M\times\mathbb{T}^m)$ 
		consists only of constant functions. Hence,
		\[
		\int_{M\times\mathbb{T}^m} f_j\wedge\mathrm{d}x
		= \left(\int_M e^{i\psi_j}\varphi\Omega\right)
		\left(\int_{\mathbb{T}^m} e^{i\xi^{(j)}\cdot x}\mathrm{d}x\right)
		= 0,
		\]
		which shows that $f_j\in\mathbf{E}$ (as defined in the proof of Proposition~\ref{horm_est}).
		
		Furthermore,
		\begin{equation}\label{int_positive}
			\left|\int_{M\times\mathbb{T}^m} g_j\,f_j\wedge\mathrm{d}x\right|
			= \left|\int_{M\times\mathbb{T}^m}\varphi\,\Omega\wedge\mathrm{d}x\right| > 0.
		\end{equation}
		
		On the other hand, we have
		\[
		\mathbb{L}^0 g_j 
		= -\,i\sum_{k=1}^m (\xi_{j,k}\omega_k - \theta_j)\,
		e^{i(\xi^{(j)}\cdot x - \psi_j(t))}
		\in \mathsf{\Lambda}^{0,1}C^\infty(M\times\mathbb{T}^m).
		\]
		Arguing as in the proof of~\cite[Theorem~3.3]{CKMT}, 
		there exists $C>0$ such that for each $\ell\in\mathbb{N}$,
		\[
		\|\mathbb{L}^0 g_j\|_{K,\ell} \le C\,|\xi^{(j)}|^{-j}.
		\]
		Similarly, differentiating $e^{i\xi^{(j)}\cdot x}$ with respect to $x$ gives
		\[
		\|f_j\|_{K,\ell}\le C\,|\xi^{(j)}|^{\ell}.
		\]
		Hence, for every fixed $\ell\in\mathbb{N}$,
		\[
		\|f_j\|_{K,\ell}\,\|\mathbb{L}^0 g_j\|_{K,\ell}\longrightarrow 0,
		\quad j\to\infty.
		\]
		
		Combining this estimate with~\eqref{int_positive} contradicts the inequality obtained in Lemma~\ref{horm_est}. Therefore, $\mathbb{L}^{n-1}$ cannot be globally $(\mathcal{E}', C_c^\infty)$-solvable.
	\end{proof}

	\subsection{Normal form on integral frequencies} \
	
	In the sequel we consider the subsets of frequencies
	\begin{equation}\label{eq:Gamma01}
	\Gamma_0 = \{\xi\in\mathbb{Z}^m : \xi\cdot\boldsymbol{\omega}\ \text{is integral}\}
	\text{ and }
	\Gamma_1= \mathbb{Z}^m\setminus\Gamma_0.
	\end{equation}
	For each $\xi\in\Gamma_0$, let $\boldsymbol{\psi}_\xi\in C^\infty(\hat M)$ be such that 
	\[
	\mathrm{d}\boldsymbol{\psi}_\xi = \Pi^*(\xi\cdot\boldsymbol{\omega}).
	\]
	Then,
	\[
	\boldsymbol{\psi}_\xi = \sum_{k=1}^{m}\xi_k\psi_k,
	\]
	where each $\psi_k\in C^\infty(\hat M)$ satisfies $\mathrm{d}\psi_k = \Pi^*(\omega_k)$ for $k=1,\dots,m$.
	
	Since $\xi\cdot\boldsymbol{\omega}$ is integral by the definition of $\Gamma_0$, 
	it follows from Proposition~\ref{uni_cov} that $e^{i\boldsymbol{\psi}_\xi}$ descends to a smooth function on~$M$.

	\begin{theorem}\label{top_iso}
		The operator
		\[
		\mathscr{T}:
		\mathsf{\Lambda}^{0,q}C^\infty_{\Gamma_0}(M\times\mathbb{T}^m)
		\longrightarrow
		\mathsf{\Lambda}^{0,q}C^\infty_{\Gamma_0}(M\times\mathbb{T}^m),
		\]
		given by
		\[
		\mathscr{T}(u)(t,x)
		= \frac{1}{(2\pi)^m}
		\sum_{\xi\in\Gamma_0}
		\widehat{u}_\xi(t)\,e^{-i\boldsymbol{\psi}_\xi(t)}e^{i\xi\cdot x},
		\]
		is well defined and is a topological isomorphism.
		Moreover,
		\[
		\mathbb{L}_{\Gamma_0}^q\circ\mathscr{T}
		= \mathscr{T}\circ \mathrm{d}_t.
		\]
	\end{theorem}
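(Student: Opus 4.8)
The plan is to exploit that $\mathscr{T}$ acts diagonally on partial Fourier coefficients, multiplying the $\xi$-th coefficient by the scalar $(2\pi)^{-m}e^{-i\boldsymbol{\psi}_\xi(t)}$. First I would record the two properties of these multipliers that drive the whole argument. Since $\xi\in\Gamma_0$ means $\xi\cdot\boldsymbol{\omega}$ is integral, Proposition~\ref{uni_cov} guarantees that $e^{\pm i\boldsymbol{\psi}_\xi}$ descend to genuine smooth functions on $M$, and differentiating the descended relation gives
\[
\mathrm{d}_t\bigl(e^{-i\boldsymbol{\psi}_\xi}\bigr) = -i\,(\xi\cdot\boldsymbol{\omega})\,e^{-i\boldsymbol{\psi}_\xi}.
\]
Working in a local chart, where $\mathrm{d}_t\boldsymbol{\psi}_\xi = \xi\cdot\boldsymbol{\omega}$ so that $\partial_{t_j}\boldsymbol{\psi}_\xi = \sum_k\xi_k(\omega_k)_j$, each $t$-derivative of $\boldsymbol{\psi}_\xi$ is bounded on a compact set by a constant times $|\xi|$; the chain rule then yields, for every multi-index $\alpha$ and every compact $K\subset M$, an estimate
\[
\sup_{t\in K}\bigl|\partial_t^\alpha e^{\mp i\boldsymbol{\psi}_\xi}(t)\bigr| \le C_{\alpha,K}\,(1+|\xi|)^{|\alpha|}.
\]
This polynomial growth in $|\xi|$ is the only effect of the multipliers on derivatives.

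For well-definedness and continuity I would use the family of seminorms on $\mathsf{\Lambda}^{0,q}C^\infty(M\times\mathbb{T}^m)$ phrased through Fourier coefficients, $p_{N,\alpha,K}(f) = \sup_{\xi}\sup_{t\in K}(1+|\xi|)^N|\partial_t^\alpha\widehat{f}_\xi(t)|$, which generate the usual topology by the Fourier characterization of smoothness. Since $\widehat{\mathscr{T}(u)}_\xi = (2\pi)^{-m}\widehat{u}_\xi\,e^{-i\boldsymbol{\psi}_\xi}$, the Leibniz rule combined with the growth bound above gives $p_{N,\alpha,K}(\mathscr{T}(u)) \le C\sum_{\beta\le\alpha}p_{N+|\alpha|,\beta,K}(u)$. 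In particular the coefficients of $\mathscr{T}(u)$ inherit the rapid decay of those of $u$, so $\mathscr{T}(u)$ is indeed smooth, is supported on $\Gamma_0$ (the multiplier depends on $t$ alone, hence does not shift frequencies), and $\mathscr{T}$ is continuous. The identical computation with the multiplier $e^{+i\boldsymbol{\psi}_\xi}$ shows that $v\mapsto(2\pi)^m\sum_{\xi\in\Gamma_0}\widehat{v}_\xi\,e^{+i\boldsymbol{\psi}_\xi}\,e^{i\xi\cdot x}$ is a well-defined continuous operator, and inspection of Fourier coefficients shows it is a two-sided inverse of $\mathscr{T}$; hence $\mathscr{T}$ is a topological isomorphism (alternatively, continuity of $\mathscr{T}$ together with bijectivity yields the conclusion via the Open Mapping Theorem, as both spaces are Fréchet).

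It remains to verify the intertwining $\mathbb{L}_{\Gamma_0}^q\circ\mathscr{T} = \mathscr{T}\circ\mathrm{d}_t$, which I would check termwise. Writing $w_\xi = \widehat{u}_\xi\,e^{-i\boldsymbol{\psi}_\xi}\,e^{i\xi\cdot x}$, one has $\partial_{x_k}w_\xi = i\xi_k w_\xi$, so $\sum_k\omega_k\wedge\partial_{x_k}w_\xi = i(\xi\cdot\boldsymbol{\omega})\wedge\widehat{u}_\xi\,e^{-i\boldsymbol{\psi}_\xi}e^{i\xi\cdot x}$, while the Leibniz rule and the identity of the first step give $\mathrm{d}_t w_\xi = \bigl(e^{-i\boldsymbol{\psi}_\xi}\mathrm{d}_t\widehat{u}_\xi - i(\xi\cdot\boldsymbol{\omega})\wedge\widehat{u}_\xi\,e^{-i\boldsymbol{\psi}_\xi}\bigr)e^{i\xi\cdot x}$. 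The two $(\xi\cdot\boldsymbol{\omega})$-terms cancel, leaving $\mathbb{L}^q w_\xi = e^{-i\boldsymbol{\psi}_\xi}(\mathrm{d}_t\widehat{u}_\xi)e^{i\xi\cdot x}$; summing over $\xi\in\Gamma_0$ and using $\widehat{(\mathrm{d}_t u)}_\xi = \mathrm{d}_t\widehat{u}_\xi$ reproduces exactly $\mathscr{T}(\mathrm{d}_t u)$. I expect the only delicate point of the whole argument to be the well-definedness/continuity step: one must ensure that the polynomial-in-$|\xi|$ growth of the derivatives of $e^{\mp i\boldsymbol{\psi}_\xi}$ is genuinely absorbed by the super-polynomial decay of $\widehat{u}_\xi$ uniformly on compacta, and that the chart-dependent constants assemble into global seminorm bounds over a finite subcover of $K$. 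Everything else is either bookkeeping or the clean cancellation above.
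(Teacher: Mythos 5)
Your proof is correct, but it follows a different route from the paper's. You work entirely on the Fourier side: you establish the multiplier bound $\sup_{t\in K}|\partial_t^\alpha e^{\mp i\boldsymbol{\psi}_\xi}(t)|\le C_{\alpha,K}(1+|\xi|)^{|\alpha|}$ from $\mathrm{d}_t\boldsymbol{\psi}_\xi=\xi\cdot\boldsymbol{\omega}$, let the rapid decay of $\widehat{u}_\xi$ absorb this polynomial growth, and read off well-definedness, continuity, and invertibility from coefficientwise seminorm estimates. The paper instead follows the scheme of Proposition~5.1 of the Ara\'ujo--Ferra--Jahnke--Ragognette work: it pulls $u$ back to the universal covering, conjugates by the explicit diffeomorphism $\Psi(\hat t,x)=(\hat t,x-\psi_1(\hat t),\dots,x-\psi_m(\hat t))$ of $\hat M\times\mathbb{T}^m$, identifies $\mathscr{T}u$ with the descent of $\Psi^*\Pi^*u$ via truncated sums and a local gluing argument, and obtains continuity of $\mathscr{T}$ and of its inverse from the continuity of $\Psi^*$ and $(\Psi^{-1})^*$, so the quantitative growth of the multipliers never appears explicitly. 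The paper's argument is slicker in that the delicate estimate you correctly flag is hidden inside the continuity of a pullback by a fixed diffeomorphism; your argument is more elementary and self-contained but must make that estimate (and the equivalence of the coefficientwise seminorms $p_{N,\alpha,K}$ with the standard topology, which follows from the Fourier characterization of smoothness together with integration by parts in $x$) explicit. The verification of the intertwining relation $\mathbb{L}^q_{\Gamma_0}\circ\mathscr{T}=\mathscr{T}\circ\mathrm{d}_t$ via the cancellation of the two $(\xi\cdot\boldsymbol{\omega})$-terms is identical in both arguments.
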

	
	\begin{proof}
		The argument follows the approach of the proof of \cite[Proposition~5.1]{AFJR2024}, with suitable adaptations to the present setting. 
		
		Let
		\[
		u(t,x) = \sum_{\xi\in\Gamma_0} \widehat{u}_\xi(t)e^{i\xi\cdot x}
		\in \mathsf{\Lambda}^{0,q}C^\infty_{\Gamma_0}(M\times\mathbb{T}^m),
		\]
		and note that, by continuity of the pullback,
		\[
		\Pi^*u(\hat t,x)
		= \sum_{\xi\in\Gamma_0} \Pi^*\widehat{u}_\xi(\hat t)e^{i\xi\cdot x}
		\in \mathsf{\Lambda}^{0,q}C^\infty(\hat M\times\mathbb{T}^m).
		\]
		
		Define the smooth diffeomorphism \(\Psi : \hat M\times\mathbb{T}^m \to \hat M\times\mathbb{T}^m\) by
		\[
		\Psi(\hat t,x_1,\dots,x_m)
		= (\hat t, x_1-\psi_1(\hat t),\dots,x_m-\psi_m(\hat t)),
		\]
		whose inverse is
		\[
		\Psi^{-1}(\hat t,x_1,\dots,x_m)
		= (\hat t, x_1+\psi_1(\hat t),\dots,x_m+\psi_m(\hat t)).
		\]
		Then,
		\begin{equation}\label{PsiPi_u}
			\Psi^*\Pi^*u(\hat t,x)
			= \sum_{\xi\in\Gamma_0}
			\Pi^*\widehat{u}_\xi(\hat t)
			e^{i(\xi\cdot x - \boldsymbol{\psi}_\xi(\hat t))}.
		\end{equation}
		
		Let us show that the right-hand side of~\eqref{PsiPi_u} descends to $M\times\mathbb{T}^m$ as the form $\mathscr{T}u$. For each $\ell\in\mathbb{N}$, define the truncated sums
		\[
		u_\ell(t,x) = \sum_{|\xi|\le\ell,\,\xi\in\Gamma_0} 	\widehat{u}_\xi(t)e^{i\xi\cdot x},
		\qquad
		w_\ell(t,x) = \sum_{|\xi|\le\ell,\,\xi\in\Gamma_0} \widehat{u}_\xi(t)e^{i(\xi\cdot x - \boldsymbol{\psi}_\xi(t))}.
		\]
		
		Since $u_\ell\to u$ in $\mathsf{\Lambda}^{0,q}C^\infty(M\times\mathbb{T}^m)$, we also have
		\[
		\Psi^*\Pi^*u_\ell \to \Psi^*\Pi^*u 	\quad\text{in}\quad \mathsf{\Lambda}^{0,q}C^\infty(\hat M\times \mathbb{T}^m),
		\]
		and, by construction, $\Pi^*w_\ell = \Psi^*\Pi^*u_\ell$ for all $\ell$.
		
		Fix $t_0\in M$ and choose a neighborhood $U\subset M$ such that $\Pi:\hat U\to U$ is a diffeomorphism
		for some $\hat U\subset\hat M$. Then, the convergence of $\Pi^*w_\ell$ in $\hat U\times\mathbb{T}^m$ 
		implies the convergence of $w_\ell$ in $U\times\mathbb{T}^m$. By estimating in compact subsets, these local limits glue to a global limit in $C^\infty(M\times\mathbb{T}^m)$, showing that $\mathscr{T}$ is well defined.
		The same argument for the inverse transformation $u(t,x)\mapsto (2\pi)^{-m}\sum_{\xi\in\Gamma_0}
		\widehat{u}_\xi(t)e^{i\boldsymbol{\psi}_\xi(t)}e^{i\xi\cdot x}$ proves that $\mathscr{T}$ is a topological isomorphism.
		
		Finally, let us verify the intertwining relation.
		For any $u\in \mathsf{\Lambda}^{0,q}C^\infty_{\Gamma_0}$,
		using that
		$\mathrm{d}_t(e^{-i\boldsymbol{\psi}_\xi})=-i(\xi\cdot\boldsymbol{\omega})
		e^{-i\boldsymbol{\psi}_\xi}$,
		we compute:
		\begin{align*}
			(\mathbb{L}_{\Gamma_0}^q\circ\mathscr{T})u
			&= \Big(\mathrm{d}_t+\sum_{k=1}^m\omega_k\wedge\partial_{x_k}\Big)
			\Big(\sum_{\xi\in\Gamma_0}
			e^{i(\xi\cdot x - \boldsymbol{\psi}_\xi(t))}\widehat{u}_\xi(t)\Big) \\
			&= \sum_{\xi\in\Gamma_0}
			e^{i(\xi\cdot x - \boldsymbol{\psi}_\xi(t))}
			\mathrm{d}_t\widehat{u}_\xi(t)
			= (\mathscr{T}\circ\mathrm{d}_t)u.
		\end{align*}
		This completes the proof.
	\end{proof}

	Although the following result is well-known, we are not aware of a reference containing an explicit proof.  
	For the sake of completeness, we include here a simple argument, based on Stokes’ theorem and de~Rham’s theorem.

	\begin{lemma}\label{extder_closed}
		Let $M$ be a smooth manifold of dimension $n$.  
		For every $q=0,\dots,n-1$, the exterior derivative
		\[
		\mathrm{d} : \mathsf{\Lambda}^q C^\infty(M)
		\longrightarrow \mathsf{\Lambda}^{q+1}C^\infty(M)
		\]
		has closed range.
	\end{lemma}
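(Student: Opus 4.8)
The plan is to describe the range of $\mathrm{d}$ explicitly and then exhibit it as an intersection of closed subspaces. Recall that $\mathrm{ran}(\mathrm{d})\subseteq\mathsf{\Lambda}^{q+1}C^\infty(M)$ is precisely the space of exact $(q+1)$-forms. The key is the identity
\[
\mathrm{ran}(\mathrm{d})
=
\Big\{\alpha\in\mathsf{\Lambda}^{q+1}C^\infty(M)\ :\ \mathrm{d}\alpha=0 \text{ and } \textstyle\int_\sigma\alpha=0 \text{ for every smooth singular } (q+1)\text{-cycle } \sigma\Big\},
\]
where $\int_\sigma$ denotes integration of a form over a smooth singular chain. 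I would establish this identity first, and then argue that each of the conditions on the right-hand side cuts out a closed subspace.

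For the inclusion ``$\subseteq$'', suppose $\alpha=\mathrm{d}\beta$. Then $\mathrm{d}\alpha=0$, and for any $(q+1)$-cycle $\sigma$ Stokes' theorem gives $\int_\sigma\alpha=\int_\sigma\mathrm{d}\beta=\int_{\partial\sigma}\beta=0$, since $\partial\sigma=0$. The reverse inclusion ``$\supseteq$'' is exactly the content of de~Rham's theorem: the de~Rham pairing $H^{q+1}_{\mathrm{dR}}(M)\times H_{q+1}(M;\mathbb{R})\to\mathbb{R}$, $([\alpha],[\sigma])\mapsto\int_\sigma\alpha$, is nondegenerate on the left, so a closed form all of whose periods vanish represents the zero cohomology class and is therefore exact. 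This is the step carrying the real content; it holds for an arbitrary smooth manifold, with no compactness assumption and regardless of whether $H^{q+1}_{\mathrm{dR}}(M)$ is finite-dimensional.

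It then remains to observe that the displayed set is closed in $\mathsf{\Lambda}^{q+1}C^\infty(M)$. Since $\mathrm{d}$ is continuous, $\ker\mathrm{d}=\{\alpha:\mathrm{d}\alpha=0\}$ is closed. For each fixed smooth singular $(q+1)$-cycle $\sigma$, the functional $\alpha\mapsto\int_\sigma\alpha$ is continuous: writing $\sigma$ as a finite combination of smooth simplices $s_i:\Delta^{q+1}\to M$, one has $\int_\sigma\alpha=\sum_i c_i\int_{\Delta^{q+1}}s_i^*\alpha$, and each term is bounded by a $C^0$-seminorm of $\alpha$ over the compact set $\bigcup_i s_i(\Delta^{q+1})\subset M$; hence $\{\alpha:\int_\sigma\alpha=0\}$ is a closed hyperplane. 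The range is the intersection of $\ker\mathrm{d}$ with all these hyperplanes, and an arbitrary intersection of closed subspaces is closed, which yields the claim.

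I expect the only genuinely delicate point to be the justification of the reverse inclusion, namely invoking de~Rham's theorem in the form ``vanishing of all periods implies exactness'' over a possibly non-compact manifold; the continuity of the period functionals, while essential, is routine once one notes that a singular cycle has compact image and that the topology on $\mathsf{\Lambda}^{q+1}C^\infty(M)$ is that of uniform convergence of all derivatives on compact sets.
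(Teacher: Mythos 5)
Your proposal is correct and follows essentially the same route as the paper's proof: both rest on the characterization of $\mathrm{ran}(\mathrm{d})$ as the closed forms with vanishing periods (Stokes for one inclusion, de~Rham for the other), together with the continuity of $\mathrm{d}$ and of the period functionals $\alpha\mapsto\int_\sigma\alpha$. The paper phrases the argument sequentially (taking $\mathrm{d}u_n\to f$ and passing to the limit), whereas you present the range as an intersection of closed subspaces, but the content is identical.
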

	
	\begin{proof}
		Let $f$ belong to the closure of $\operatorname{ran}(\mathrm{d})$. Then there exists a sequence $\{u_n\}\subset\mathsf{\Lambda}^q C^\infty(M)$ such that $\mathrm{d}u_n \to f$ in $\mathsf{\Lambda}^{q+1} C^\infty(M)$, where the topology is that of uniform convergence of all derivatives on compact subsets.
		
		By continuity of the exterior derivative,
		\[
		\mathrm{d}f = \lim_{n\to\infty}\mathrm{d}(\mathrm{d}u_n) = 0,
		\]
		so $f$ is closed. Moreover, for any smooth $(q+1)$-cycle $\gamma$ (which is compact), we can pass the limit under the integral sign. Then,
		\[
		\int_\gamma f = \lim_{n\to\infty} \int_\gamma \mathrm{d}u_n
		= \lim_{n\to\infty} \int_{\partial\gamma} u_n = 0,
		\]
		where the last equality follows from Stokes’ theorem. Hence, $f$ is a closed $(q+1)$-form whose periods vanish over all cycles.
		
		By the de~Rham theorem, a closed form with zero periods is exact, thus $f\in \operatorname{ran} (\mathrm{d})$. 
		Therefore, $\operatorname{ran}(\mathrm{d})$ is closed in $\mathsf{\Lambda}^{q+1} C^\infty(M)$.
	\end{proof}
	
	\begin{proposition}\label{partial_closed}
		Let $M$ be a smooth paracompact manifold.  Then the partial exterior derivative
		\[
		\mathrm{d}_t:\mathsf{\Lambda}^{0,q}C^\infty(M\times\mathbb{T}^m) \longrightarrow \mathsf{\Lambda}^{0,q+1}C^\infty(M\times\mathbb{T}^m)
		\]
		has closed range for every $q=0,\dots,n-1$.
	\end{proposition}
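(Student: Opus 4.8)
The plan is to recognize $\mathrm{d}_t$ as the de~Rham differential on $M$ tensored with the identity on $C^\infty(\mathbb{T}^m)$, and then to transport the closedness of $\mathrm{ran}(\mathrm{d})$ already proved in Lemma~\ref{extder_closed} through the good behaviour of completed tensor products of nuclear Fréchet spaces. First I would set up the identification: writing $A^q := \mathsf{\Lambda}^q C^\infty(M)$ and $G := C^\infty(\mathbb{T}^m)$, both nuclear Fréchet spaces, one has the canonical topological isomorphism $\mathsf{\Lambda}^{0,q}C^\infty(M\times\mathbb{T}^m) \cong A^q \,\hat\otimes\, G$, where $\hat\otimes$ denotes the completed projective tensor product (which coincides with the injective one by nuclearity). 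Under this isomorphism the partial exterior derivative becomes $\mathrm{d}_t = \mathrm{d}\,\hat\otimes\,\mathrm{id}_G$; this is just the tensor counterpart of the partial Fourier description, in which the $\xi$-th coefficient of $\mathrm{d}_t f$ equals $\mathrm{d}\widehat{f}_\xi$.

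Next I would invoke Lemma~\ref{extder_closed}: since $\mathrm{ran}(\mathrm{d})$ is closed, $B^{q+1} := \mathrm{ran}(\mathrm{d}\colon A^q \to A^{q+1})$ is a closed, hence nuclear Fréchet, subspace of $A^{q+1}$, and $\mathrm{d}$ factors as a topological quotient map $\mathrm{d}\colon A^q \twoheadrightarrow B^{q+1}$ (open by the Open Mapping Theorem). Two standard properties of completed tensor products then conclude the argument. By right exactness of $\hat\otimes$ (the completed projective tensor product of a Fréchet quotient map with $\mathrm{id}_G$ is again a quotient map), one gets $\mathrm{ran}(\mathrm{d}_t) = \mathrm{ran}(\mathrm{d}\,\hat\otimes\,\mathrm{id}_G) = B^{q+1}\,\hat\otimes\,G$ as a subset of $A^{q+1}\,\hat\otimes\,G$. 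Because $G$ is nuclear, $\hat\otimes_\pi = \hat\otimes_\varepsilon$, and the injective tensor product preserves topological subspaces, so the inclusion $B^{q+1}\hookrightarrow A^{q+1}$ tensors to a topological embedding $B^{q+1}\,\hat\otimes\,G \hookrightarrow A^{q+1}\,\hat\otimes\,G$. Since $B^{q+1}\,\hat\otimes\,G$ is itself a complete (Fréchet) space, its image is complete, hence closed in the metrizable space $A^{q+1}\,\hat\otimes\,G = \mathsf{\Lambda}^{0,q+1}C^\infty(M\times\mathbb{T}^m)$. Thus $\mathrm{ran}(\mathrm{d}_t)$ is closed. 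Equivalently, one may phrase these two properties as the single statement that the functor $-\,\hat\otimes\,G$ carries the short exact sequence $0\to\ker\mathrm{d}\to A^q\xrightarrow{\mathrm{d}}B^{q+1}\to 0$ of nuclear Fréchet spaces to a short exact sequence, which is precisely the \emph{functorial property of exact sequences of nuclear Fréchet spaces} referred to in the Introduction.

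The main obstacle is concentrated in the exactness-preservation step, and it is instructive to see why a naive Fourier-by-Fourier argument does not suffice. Closedness of $\mathrm{ran}(\mathrm{d}_t)$ read through the partial Fourier expansion would demand solving $\mathrm{d}v_\xi = \widehat{f}_\xi$ with primitives $v_\xi \in \mathsf{\Lambda}^q C^\infty(M)$ decaying in $\xi$ at essentially the same rate as $\widehat{f}_\xi$---a uniform, tame estimate that is not transparent, since each primitive is determined only up to $\ker\mathrm{d}$ and a priori carries no quantitative control. Nuclearity of $G = C^\infty(\mathbb{T}^m)$ is exactly what removes this difficulty: it identifies the projective and injective tensor topologies and forces the tensored sequence to stay topologically exact, so no per-frequency estimate is ever needed. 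I would therefore be careful to cite the relevant functional-analytic facts (right exactness of the completed projective tensor product for Fréchet quotient maps, subspace-preservation by the injective tensor product, and the coincidence $\hat\otimes_\pi = \hat\otimes_\varepsilon$ in the nuclear case) rather than attempt the quantitative route, which is where all the real work would otherwise hide.
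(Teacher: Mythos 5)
Your proposal is correct and follows essentially the same route as the paper: the paper also identifies $\mathsf{\Lambda}^{0,q}C^\infty(M\times\mathbb{T}^m)$ with $\mathsf{\Lambda}^{q}C^\infty(M)\,\widehat{\otimes}\,C^\infty(\mathbb{T}^m)$, invokes Lemma~\ref{extder_closed}, and then cites a result (\cite[Proposition~1.3.1]{Novelli}) stating that tensoring a closed-range map of nuclear Fr\'echet spaces with the identity preserves closed range. You have merely unpacked the proof of that cited proposition (right exactness of $\widehat{\otimes}_\pi$ for Fr\'echet quotient maps, subspace preservation via $\widehat{\otimes}_\pi=\widehat{\otimes}_\varepsilon$ in the nuclear case, and completeness of the tensored subspace), which is a faithful expansion rather than a different argument.
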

	
	\begin{proof}
		This statement is a particular case of \cite[Proposition~1.3.1]{Novelli}. Indeed, by Lemma~\ref{extder_closed}, the exterior derivative 	\(\mathrm{d}: \mathsf{\Lambda}^{q} C^\infty(M)\to\mathsf{\Lambda}^{q+1}C^\infty(M)	\) has closed range for all $q$.  
		
		Moreover, both $\mathsf{\Lambda}^{q}C^\infty(M)$ and $C^\infty(\mathbb{T}^m)$ are Fréchet–nuclear spaces, and we have the natural topological isomorphism
		\[
		\mathsf{\Lambda}^{0,q}C^\infty(M\times\mathbb{T}^m)
		\simeq \mathsf{\Lambda}^{q}C^\infty(M)\,\widehat{\otimes}\, C^\infty(\mathbb{T}^m).
		\]
		Under these conditions, \cite[Proposition~1.3.1]{Novelli} ensures that the operator 
		$\mathrm{d}\otimes I$ on the completed projective tensor product has closed range, which is precisely $\mathrm{d}_t$.
	\end{proof}
	
	\begin{proposition}\label{cl_gamma0}
		The operator
		\[
		\mathbb{L}^0_{\Gamma_0} :
		C^\infty_{\Gamma_0}(M\times\mathbb{T}^m)
		\longrightarrow
		\mathsf{\Lambda}^{0,1} C^\infty_{\Gamma_0}(M\times\mathbb{T}^m)
		\]
		has closed range.
	\end{proposition}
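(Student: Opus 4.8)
The plan is to exploit the normal form furnished by Theorem~\ref{top_iso}, which conjugates $\mathbb{L}^0_{\Gamma_0}$ to the partial exterior derivative $\mathrm{d}_t$ on the frequency subspace $C^\infty_{\Gamma_0}(M\times\mathbb{T}^m)$, and then to transfer to $\mathbb{L}^0_{\Gamma_0}$ the closed-range property of $\mathrm{d}_t$ established in Proposition~\ref{partial_closed}. Concretely, I would first record the intertwining relation of Theorem~\ref{top_iso} in degree $q=0$. Writing $\mathscr{T}_{(0)}$ and $\mathscr{T}_{(1)}$ for the topological isomorphism $\mathscr{T}$ acting respectively on $C^\infty_{\Gamma_0}(M\times\mathbb{T}^m)$ and on $\mathsf{\Lambda}^{0,1}C^\infty_{\Gamma_0}(M\times\mathbb{T}^m)$, the relation $\mathbb{L}^0_{\Gamma_0}\circ\mathscr{T}_{(0)}=\mathscr{T}_{(1)}\circ\mathrm{d}_t$ gives
\[
\mathbb{L}^0_{\Gamma_0}=\mathscr{T}_{(1)}\circ\mathrm{d}_t\circ\mathscr{T}_{(0)}^{-1}.
\]
Since $\mathscr{T}_{(0)}^{-1}$ is a bijection of $C^\infty_{\Gamma_0}(M\times\mathbb{T}^m)$, one has $\mathrm{ran}(\mathbb{L}^0_{\Gamma_0})=\mathscr{T}_{(1)}\bigl(\mathrm{ran}(\mathrm{d}_t)\bigr)$, where $\mathrm{d}_t$ now denotes its restriction to $C^\infty_{\Gamma_0}(M\times\mathbb{T}^m)$ with values in $\mathsf{\Lambda}^{0,1}C^\infty_{\Gamma_0}(M\times\mathbb{T}^m)$. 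As $\mathscr{T}_{(1)}$ is a homeomorphism, it carries closed subspaces to closed subspaces, so the problem reduces to proving that this restricted $\mathrm{d}_t$ has closed range.

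For that reduced claim I would use the fact that $\mathrm{d}_t$ acts diagonally on the partial Fourier expansion, $\mathrm{d}_t\bigl(\widehat{u}_\xi(t)\,e^{i\xi\cdot x}\bigr)=(\mathrm{d}_t\widehat{u}_\xi)(t)\,e^{i\xi\cdot x}$, and hence preserves frequency support and respects the topological direct sum decomposition $C^\infty(M\times\mathbb{T}^m)=C^\infty_{\Gamma_0}\oplus C^\infty_{\mathbb{Z}^m\setminus\Gamma_0}$ (and likewise in degree one). This yields the identity
\[
\mathrm{ran}\bigl(\mathrm{d}_t|_{C^\infty_{\Gamma_0}}\bigr)=\mathrm{ran}(\mathrm{d}_t)\cap\mathsf{\Lambda}^{0,1}C^\infty_{\Gamma_0}(M\times\mathbb{T}^m),
\]
the inclusion $\subseteq$ being immediate and $\supseteq$ following by splitting any preimage according to its $\Gamma_0$ and $\mathbb{Z}^m\setminus\Gamma_0$ parts and discarding the vanishing complementary component. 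By Proposition~\ref{partial_closed}, $\mathrm{ran}(\mathrm{d}_t)$ is closed in the full space, and $\mathsf{\Lambda}^{0,1}C^\infty_{\Gamma_0}(M\times\mathbb{T}^m)$ is closed by Proposition~\ref{cont_fourier}; being an intersection of two closed subspaces, $\mathrm{ran}(\mathrm{d}_t|_{C^\infty_{\Gamma_0}})$ is therefore closed.

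Combining the two steps gives the closedness of $\mathrm{ran}(\mathbb{L}^0_{\Gamma_0})$, completing the proof. I do not anticipate a genuine obstacle here: the substantive work has already been carried out in Theorem~\ref{top_iso}, which constructs the intertwining isomorphism $\mathscr{T}$ and verifies the conjugation identity, and in Proposition~\ref{partial_closed}, which supplies the closed range of the partial exterior derivative. The present argument is essentially a bookkeeping assembly, the only points demanding care being the correct identification of the two instances $\mathscr{T}_{(0)},\mathscr{T}_{(1)}$ of $\mathscr{T}$ acting on $0$- and $1$-forms, and the elementary observation that closedness of range is preserved under composition with topological isomorphisms.
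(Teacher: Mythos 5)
Your proof is correct and follows exactly the paper's route: the paper's own proof of Proposition~\ref{cl_gamma0} simply cites Theorem~\ref{top_iso} and Proposition~\ref{partial_closed}, which is precisely the conjugation-plus-closed-range argument you assemble. The only difference is that you spell out the detail the paper leaves implicit, namely that the restriction of $\mathrm{d}_t$ to the $\Gamma_0$-frequency subspace still has closed range, which you justify correctly via the diagonal action on Fourier coefficients.
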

	
	\begin{proof}
		This follows directly from Theorem~\ref{top_iso} and Proposition~\ref{partial_closed}.
	\end{proof}
	
	It remains to obtain conditions for the closedness of the range of
	$\mathbb{L}^0_{\Gamma_1}$, where $\Gamma_1=\mathbb{Z}^m\setminus\Gamma_0$.
	
	\begin{proposition}\label{cl_gamma1}
		If the family $\boldsymbol{\omega}$ is not $\Gamma_1$-Liouville,
		then $\mathbb{L}^0_{\Gamma_1}$ has closed range.
	\end{proposition}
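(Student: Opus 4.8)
The plan is to obtain the closedness of the range of $\mathbb{L}^0_{\Gamma_1}$ as a consequence of its $\Gamma_1$-global hypoellipticity, combined with the abstract closed-range criterion of Theorem~\ref{FS_GHGS}. The first step is to notice that, by the very definition of $\Gamma_1=\mathbb{Z}^m\setminus\Gamma_0$ in~\eqref{eq:Gamma01}, no $\xi\in\Gamma_1$ renders $\xi\cdot\boldsymbol{\omega}$ integral; hence $\boldsymbol{\omega}$ is \emph{automatically} not $\Gamma_1$-rational. Together with the standing hypothesis that $\boldsymbol{\omega}$ is not $\Gamma_1$-Liouville, Proposition~\ref{gamma_gh} then guarantees that $\mathbb{L}^0_{\Gamma_1}$ is $\Gamma_1$-globally hypoelliptic.

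Next I would cast this in the abstract framework of Section~\ref{sec:regclrng}. By Proposition~\ref{cont_fourier}, the spaces $E=C^\infty_{\Gamma_1}(M\times\mathbb{T}^m)$ and $F=\mathsf{\Lambda}^{0,1}C^\infty_{\Gamma_1}(M\times\mathbb{T}^m)$ are closed subspaces of the FS spaces $C^\infty(M\times\mathbb{T}^m)$ and $\mathsf{\Lambda}^{0,1}C^\infty(M\times\mathbb{T}^m)$, hence are themselves FS and, by the construction in Section~\ref{FS_spaces}, may be realized as projective limits of nested compact sequences of Banach spaces; denote by $E_0$ the lowest Banach space in the realization of $E$, so that $E\subset E_0$. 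Taking $F^\sharp=\mathsf{\Lambda}^{0,1}\mathscr{D}'_{\Gamma_1}(M\times\mathbb{T}^m)$, which is Hausdorff, the first-order operator $\mathbb{L}^0_{\Gamma_1}$ extends to a continuous map $E_0\to F^\sharp$ and preserves frequency support in $\Gamma_1$, so that
\[
\mathbb{L}^0_{\Gamma_1}:(E_0,E)\longrightarrow(F^\sharp,F)
\]
is a map of pairs in the sense of Section~\ref{sec:regclrng}.

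The decisive observation is that $\Gamma_1$-global hypoellipticity is precisely property~$(\mathcal{H})$ for this map of pairs: if $u\in E_0\subset\mathscr{D}'_{\Gamma_1}$ satisfies $\mathbb{L}^0_{\Gamma_1}u\in F$, then the definition of $\Gamma_1$-global hypoellipticity forces $u\in C^\infty(M\times\mathbb{T}^m)$, whence $u\in E$. Since $(\mathcal{H})$ trivially implies $(\mathcal{H}')$ (take $v=u$), all hypotheses of Theorem~\ref{FS_GHGS} are met, and the range of $\mathbb{L}^0_{\Gamma_1}:E\to F$ is closed. The main point requiring care---and the only genuine obstacle---is verifying that the projective-limit realization of the frequency-restricted subspace $C^\infty_{\Gamma_1}$ by Banach spaces is compatible with the differential operator, namely that $E_0$ can be chosen so that $\mathbb{L}^0_{\Gamma_1}$ maps it continuously into $\mathscr{D}'_{\Gamma_1}$ while preserving the support condition; this is routine once one takes the $E_k$ to be the $\Gamma_1$-components of the local Sobolev spaces of Section~\ref{Loc_Sob}, on which $\mathbb{L}^0_{\Gamma_1}$ lowers the Sobolev order by one.
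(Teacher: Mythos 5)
Your argument is correct and follows essentially the same route as the paper: the paper's proof of this proposition is a direct appeal to Proposition~\ref{gamma_gh}, whose closed-range assertion is itself obtained from the abstract criterion of Theorem~\ref{FS_GHGS} (via Theorem~\ref{gs_agh_equiv} and Corollary~\ref{AGH_closed}). You have merely unpacked that citation chain, correctly noting that $\boldsymbol{\omega}$ is automatically not $\Gamma_1$-rational and that $\Gamma_1$-global hypoellipticity yields property $(\mathcal{H})$, hence $(\mathcal{H}')$, for the relevant map of pairs.
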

	
	\begin{proof}
		This is a particular case of Proposition~\ref{gamma_gh}.
	\end{proof}
	
	Finally, combining all previous results in this section, we conclude the proof of Theorem~\ref{thm_solv}.
	
	\begin{proof}[Proof of Theorem~\ref{thm_solv}]
		The equivalence $(1)\Leftrightarrow(2)$ follows from Lemma~\ref{lema_gabriel},  
		and the implication $(2)\Rightarrow(3)$ is immediate.  
		The equivalence $(4)\Leftrightarrow(5)$ is given by Proposition~\ref{liou_DC}.  
		The implication $(3)\Rightarrow(4)$ follows from Theorem~\ref{ECc_horm}.  
		Finally, Propositions~\ref{cl_gamma0} and~\ref{cl_gamma1} yield $(4)\Rightarrow(1)$.  
		This completes the proof.
	\end{proof}

	\section*{Acknowledgments}
	
	The authors thank Gabriel Ara\'ujo for the valuable discussions and suggestions.
	This study was financed in part by CAPES -- Brasil (Finance Code 001). The first author was supported in part by the Italian Ministry of the University and Research -- MUR, within the framework of the Call relating to the scrolling of the final rankings of the PRIN 2022 -- Project Code 2022HCLAZ8, CUP D53C24003370006 (PI A.~Palmieri, Local unit Sc.~Resp.~S.~Coriasco). 
	The second and third author were supported in part by CNPq -- Brasil (grants 316850/2021-7 and 301573/2025-5, respectively).

\bibliographystyle{plain}
\bibliography{references}
	
\end{document}